\def\captionof#1#2{{\def\@captype{#1}#2}}
\newcounter{tablegroup}
\newcounter{subtable}[tablegroup]
\newcommand{\tend}[3][]{\xrightarrow[#2\to#3]{#1}}
\newtheorem{thm}{Theorem}[section]
\newtheorem{cor}[thm]{Corollary}
\newtheorem{lem}[thm]{Lemma}
\newtheorem{prop}[thm]{Proposition}
\newtheorem{defn}[thm]{Definition}
\newtheorem{Que}[thm]{Question}
\numberwithin{equation}{section}
\newcommand{\eps}{\varepsilon}
\newcommand{\bmu}{\bm \mu}
\newcommand{\bml}{\bm \lambda}
\begin{document}
\title[M\"{o}bius disjointness conjecture for local dendrite maps]
{M\"{o}bius disjointness conjecture for local dendrite maps}

\author[el.H. el Abdalaoui, G. Askri,
H. Marzougui]{el Houcein el Abdalaoui, Ghassen Askri,
Habib Marzougui}

\address{e. H. el Abdalaoui, Normandy University of Rouen,
Department of Mathematics, LMRS  UMR 6085 CNRS, Avenue de l'Universit\'e, BP.12,
76801 Saint Etienne du Rouvray - France.}
\email{elhoucein.elabdalaoui@univ-rouen.fr}
\address{ Ghassen Askri, University of Carthage, Bizerte Preparatory Engineering Institute, 7021, Jarzouna, Tunisia}
\email{askri.ghassen@gmail.com}
\address{ Habib Marzougui, University of Carthage, Faculty
of Sciences of Bizerte, Department of Mathematics,
Jarzouna, 7021, Tunisia;}
\email{hmarzoug@ictp.it and habib.marzougui@fsb.rnu.tn}

\subjclass[2000]{ 37B05, 37B45, 37E99}

\keywords{ dendrite, graph, local dendrite, $\omega$-limit set, minimal set, 
M\"{o}bius function, M\"{o}bius disjointness conjecture, Sarnak conjecture}

\begin{abstract}
We prove that the M\"{o}bius disjointness conjecture holds for graph maps and for all
monotone local dendrite maps. We further show that this also hold for continuous map on certain class of dendrites. Moreover, we see that there is a example of transitive dendrite map with zero entropy  for which M\"{o}bius disjointness holds. 
\end{abstract}
\maketitle

\section{\bf Introduction}
Let $X$ be a compact metric space with a metric $d$ and let $f: X\to X$ be a continuous map.
We call for short ($X, f$) a dynamical system. The topological entropy $h(f)$ of such a system is defined
as: \[h(f) = \underset{\varepsilon\to 0}\lim \underset{n\to +\infty}\limsup \dfrac{1}{n}
\textrm{log}~\textrm{sep}(n, f, \varepsilon).\]
where for $n$ integer and $\varepsilon>0$, $\textrm{sep}(n, f, \varepsilon)$ is the maximal possible cardinality of
an ($n, f, \varepsilon$)-separated set in $X$, this later means that for every
two points of it, there exists $0\leq j<n$ with $d(f^j(x), f^j(y)) > \varepsilon$, where $f^{j}$ denotes
the $j$-$\textrm{th}$ iterate of $f$. A dynamical system ($X, f$) is called a \textit{null system} if its sequence entropy is zero
for any sequence; we refer the reader to \cite{G}, \cite{HMY} for the details. 
The M\"{o}bius function $\bmu$ is an ally of the Liouville function $\bml$.
This later function is defined by $\bml(n) =1$ if the number of prime factors of $n$ is even and $-1$ otherwise.
Precisely, the M\"{o}bius function is given by
$$\bmu(n)  =  \begin{cases}
 1  & \ \textrm{if } n = 1 \\
 \bml(n) & \ \textrm{if } \textrm{all primes in decomposition of $n$ are distincts} \, \\
 0 & \ \textrm{otherwise}.
 \end{cases}$$

In 2010, P. Sarnak \cite{Sa}, \cite{Sa2} initiated the study of the dynamical system generated by the M\"{o}bius function, and in the connection with the M\"{o}bius randomness law, he stated the following conjecture:

\textbf{ Sarnak's Conjecture.} Let ($X, f$) be a dynamical system with zero topological entropy $h(f)=0$. Then

\begin{equation}\label{(1.1)}
    S_N(x,\varphi): = \frac{1}{N} \sum_{n=1}^N \bmu(n)\varphi(f^n (x)) = o(1), \textrm{as } \ N\to +\infty
\end{equation}
 for each $x\in X$ and each continuous function $\varphi : X\longrightarrow \mathbb{R}$.\\
 
 We recall that the M\"{o}bius randomness law \cite{Kowalski} assert that for any ``reasonable'' sequence $(a_n)$, we have
 $$\frac{1}{N}\sum_{n=1}^{N}\bmu(n) a_n=o(1).$$
 
It turns out that Sarnak's conjecture \eqref{(1.1)} is connected to the popular Chowla conjecture on the multiple autocorrelations of the M\"{o}bius function. This later conjecture assert that for any $r\geq 0$, $1\leq a_1<\dots<a_r$, $i_s\in \{1,2\}$ not all equal to $2$, we have
	\begin{equation}\label{cza}
	\sum_{n\leq N}\bmu^{i_0}(n)\bmu^{i_1}(n+a_1)\cdot\ldots\cdot\bmu^{i_r}(n+a_r)={\rm o}(N).
	\end{equation} 
The Chowla conjecture implies a weaker conjecture stated by Chowla in \cite{chowla}. We refer to \cite{chowla} for the statement of this weaker form of Chowla conjecture. For more details on the connection between Sarnak and Chowla conjectures we refer to the
very recent works of the first author \cite{elabdal-SC}, Tao \cite{blogTaoII}, Gomilko-Kwietniak-Lema\'{n}czyk \cite{lem} and Tao and Ter\"{a}v\"{a}inen~\cite{TaoTer}.\\

\noindent Note, that in the simplest case, when $f \equiv \textrm{const}$, ($1.1)$ is equivalent to the statement
\begin{equation*}\label{prime}
 \frac{1}{N} \sum_{n=1}^N \bmu(n) = o(1),  \textrm{as } \ N\to +\infty
\end{equation*}
which is equivalent to the Prime Number Theorem \cite{Apostol}. The conjecture ($1.1)$, also known as the M\"{o}bius disjointness conjecture
is known to be true for several dynamical systems, see e.g.
[\cite{ALR},\cite{AKLR},\cite{HWZ}, \cite{HMY}, \cite{Ka}, \cite{LS},\cite{Mullner},\cite{FKL}] and the references therein. In \cite{Ka}, Karagulyan proved the conjecture for the
orientation preserving circle-homeomorphisms and for continuous
interval maps of zero entropy. 
In the present paper, we are interested in another natural classes of dynamical systems: the graph, dendrite 
and local dendrite maps. We thus establish that for the graph maps and for 
all monotone local dendrite maps Sarnak's conjecture holds. 
We are also able to prove that the M\"{o}bius disjointness property holds for a deterministic class of dendrites for which the set of endpoints is closed and its derived set is finite. 
This extends Karagulyan result on the M\"{o}bius disjointness of any interval maps with zero entropy and (orientation preserving) 
circle homeomorphisms.\\
Recent interests in dynamics on graphs and local dendrites is motivated by the fact that graphs and local dendrites are 
examples of Peano continua with complex topology structures (e.g., \cite{Nadler},
pp. 165-187). On the other hand, dendrites often appear as Julia sets in complex dynamics (see
\cite{b}). After finishing this version, we learned that Li, Oprocha, Yang, and Zeng had recently
solved the conjecture for graph maps \cite{LOYZ}. Notice that our proof of Theorem \ref{t31} and that of \cite{LOYZ} are different. 
Indeed, in their proof, they need a more stronger dynamical property based on the notion of locally mean equicontinuous.\\

In this paper, we also investigate the M\"{o}bius disjointness of transitive dendrite maps. It is turns out that we are able to establish the M\"{o}bius disjointness of the transitive dendrite maps with zero topological entropy introduced by J. Byszewski and \textit{al.} \cite{BFK}.\\

According to the recent result of J. Li, P. Oprocha and G. Zhang \cite{JOZ}, our investigation can be seen as a deep investigation on Sarnak's conjecture. Indeed, the authors therein proved that if the M\"{o}bius disjointness holds for any Gehman dendrite map with zero entropy then Sarnak's conjecture holds.\\
   
We further discuss the problem of M\"{o}bius distinctness for the transitive dendrite map with positive topological entropy introduced by \v{S}pitalsky \cite{Sp}. 
At this point, let us point out that Sarnak mentioned in his paper \cite{Sa} 
that J. Bourgain constructed a topological dynamical system with positive topological entropy for 
which the M\"{o}bius disjointness holds. Later, Downarowicz and Serafin constructed a class of topological systems with the positive topological entropy  which satisfy the M\"{o}bius randomness law \cite{Dow}. 
\medskip

The plan of the paper is as follows. In Section 2, we give some definitions and preliminary properties
on graphs, dendrites and local dendrites which are useful for the rest of the paper.
 Section 3 is devoted to the proof of Theorem \ref{t31} for graph maps of zero entropy. Section 4 is
 devoted to local dendrite maps of zero entropy. In Subsection 4.1 we will prove Theorem
 \ref{t41} for monotone local dendrite maps. In Subsection 4.2, we prove the conjecture for continuous map 
 on a certain class of dendrites. Subsection 4.3, is devoted to the conjecture for an example of transitive 
 dendrite map of zero entropy. 
 Finally, in Subsection 4.4, we discuss the  M\"{o}bius disjointness of an example of transitive dendrite map with positive entropy.
\medskip

\section{\bf Preliminaries and some results}
Let $\mathbb{Z},\ \mathbb{Z}_{+}$ and $\mathbb{N}$ be the sets of integers, non-negative integers and positive integers,
respectively. For $n\in \mathbb{Z_{+}}$ denote by  $f^{n}$ the $n$-$\textrm{th}$ iterate of $f$; that is,
$f^{0}$=identity and $f^{n} = f\circ f^{n-1}$ if $n\in \mathbb{N}$. For any $x\in X$, the subset
$\textrm{Orb}_{f}(x) = \{f^{n}(x): n\in\mathbb{Z}_{+}\}$ is called the \textit{orbit} of $x$ (under $f$).
A subset $A\subset X$ is called \textit{$f-$invariant} (resp. strongly $f-$invariant)
if $f(A)\subset A$ (resp., $f(A) = A$). It is called \textit{a minimal set of $f$} if it is
non-empty, closed, $f$-invariant and minimal (in the sense of
inclusion) for these properties, this is equivalent to say that it is an orbit closure that contains no smaller one;
for example a single finite
orbit. When $X$ itself is a minimal set, then we say that $f$ is \textit{minimal}.
We define the \textit{$\omega$-limit} set of a point $x$ to be the set:
$\omega_{f}(x)  = \{y\in X: \exists\ n_{i}\in \mathbb{N},
n_{i}\rightarrow\infty, \underset{i\to +\infty}\lim d(f^{n_{i}}(x), y) = 0\}$. A point $x\in X$ is called : \\
$-$ \textit{periodic} of period $n\in\mathbb{N}$ if
$f^{n}(x)=x$ and $f^{i}(x)\neq x$ for $1\leq i\leq n-1$; if $n = 1$, $x$ is called a \textit{fixed point} of $f$ i.e.
$f(x) = x$. \\
$-$ \textit{Almost periodic} if for any neighborhood $U$ of $x$ there is $N\in\mathbb{N}$ such that
$\{f^{i+k}(x): i=0,1,\dots,N\}\cap U\neq \emptyset$, for all $k\in \mathbb{N}$. 
It is well known (see e.g. \cite{blo}, Chapter V, Proposition 5) that a point
$x$ in $X$ is almost periodic if and only if $\overline{\textrm{Orb}_{f}(x)}$ is a minimal set of $f$.\\

A pair $(x,y) \in X \times X$ is called
proximal if  $\liminf_{n \rightarrow +\infty}d(f^n(x),f^n(y)) = 0$;  it is called asymptotic if $\lim_{n \rightarrow +\infty }d(f^n(x),f^n(y)) = 0$.  A pair $(x,y) \in X \times X$ is
is said to be a Li-Yorke pair of $f$ if it is proximal but not asymptotic.\\

In this section, we recall some basic properties of graphs dendrites and local dendrites.

A \emph{continuum} is a compact connected metric space. An \emph{arc} is
any space homeomorphic to the compact interval $[0,1]$. A topological space
is \emph{arcwise connected} if any two of its points can be joined by an
arc. We use the terminologies from Nadler \cite{Nadler}.
\medskip

By a \textit{graph} $X$, we mean a continuum which can be written as the union of finitely many arcs
such that any two of them are either
disjoint or intersect only in one or both of their endpoints.
 For any point $v$ of $X$, the \textit{order} of $v$, 
denoted by $\textrm{ord}(v)$, is an integer $r\geq 1$ such that $v$ admits a neighborhood $U$ in $X$
homeomorphic to the set $\{z\in \mathbb{C}: z^{r}\in [0,1]\}$ with
the natural topology, with the
homeomorphism mapping $v$ to $0$. If $r\geq 3$ then $v$ is called a \textit{branch point}. If $r=1$, then we call $v$ an
\textit{endpoint} of $X$. If $r=2$, $v$ is called 
\textit{a regular point} of $X$. 

 Denote by $B(X)$ and $E(X)$ the sets of
branch points and endpoints of $X$ respectively. An edge is the
closure of some connected component of $X\setminus B(X)$, it is
homeomorphic to $[0,1]$. A subgraph of $X$ is a subset of $X$ which
is a graph itself. Every sub-continuum of a graph is a graph
(\cite{Nadler}, Corollary 9.10.1).
Denote by $S^{1}=[0,1]_{\mid 0\sim 1}$ the unit circle endowed with the
orientation: the counter clockwise sense induced via the natural
projection $[0,1]\rightarrow S^{1}$. A circle is any space homeomorphic to $S^{1}$.
\medskip

By a \textit{dendrite} $X$, we mean a locally connected continuum
containing no homeomorphic copy to a circle. Every sub-continuum of a
dendrite is a dendrite (\cite{Nadler}, Theorem 10.10) and every
connected subset of $X$ is arcwise connected (\cite{Nadler},
Proposition 10.9). In addition, any two distinct points $x,y$ of a
dendrite $X$ can be joined by a unique arc with endpoints $x$ and
$y$, denote this arc by $[x,y]$ and let denote by
$[x,y) = [x,y]\setminus\{y\}$ (resp. $(x,y] = [x,y]\setminus\{x\}$ and
$(x,y) = [x,y]\setminus\{x,y\}$). A point $x\in X$ is called an
\textit{endpoint} if $X\setminus\{x\}$ is connected. It is called a
\textit{branch point} if $X\setminus \{x\}$ has more than two
connected components. The number of connected components of $X\setminus \{x\}$ is called the \textit{order} of $x$ and 
denoted by ord$(x)$. The order of $x$ relatively to a subdendrite $Y$ of $X$ is denoted by $ord_Y(x)$.
Denote by $E(X)$ and $B(X)$ the sets of
endpoints, and branch points of $X$, respectively.
By (\cite{Kur}, Theorem 6, 304 and Theorem 7, 302), $B(X)$ is at most countable. A point $x\in
X\setminus E(X)$ is called a \textit{cut point}. It is known that the  set of cut
points of $X$ is dense in $X$ (\cite{Kur}, VI, Theorem 8, p. 302).
Following (\cite{Ar}, Corollary 3.6), for any dendrite $X$, we have
B($X)$ is discrete whenever E($X)$ is closed.  An arc $I$ of $X$ is called \emph{free} if $I \cap B(X)=\emptyset$.
For a subset $A$ of $X$, we call \emph{the convex hull} of $A$, denoted by $[A]$, the intersection of all
sub-continua of $X$ containing $A$, one can write $[A] = \cup_{x, y\in A}[x,y]$.
\medskip

By a \textit{local dendrite} $X$, we mean a continuum every point of which has a dendrite neighborhood. 
A local dendrite is then a locally connected continuum containing only a finite number of
circles (\cite{Kur}, Theorem 4, p. 303). As a consequence every sub-continuum of a local
dendrite is a local dendrite (\cite{Kur}, Theorems 1 and 4, p. 303). Every graph and every dendrite is a local
dendrite.
A continuous map from a local dendrite (resp. graph, resp. dendrite) 
into itself is called a\textit{ local dendrite map} (resp. \textit{graph map}, resp. \textit{dendrite map}).
It is well known that every dendrite map has a fixed point
(see \cite{Nadler}). If $A$ is a sub-dendrite of $X$, define the retraction (or the \textit{first point map}) $r_{A} : X \rightarrow A$
by letting $r_{A}(x) = x$, if $x\in A$, and by letting $r_{A}(x)$ to be the unique point $r_{A}(x)\in A$ such that
$r_{A}(x)$ is a point of any arc in $X$
from $x$ to any point of $A$, if $x\notin A$ (see \cite{Nadler}, p. 176).
Note that the map $r_{A}$ is constant on each
connected component of $X\backslash A$.
For a subset $A$ of $X$, denote by $\overline{A}$ the closure of $A$ and by $\textrm{diam}(A)$ the diameter of
$A$. \\
For every topological space $X$, a map $f: ~X\to X$ is called \textit{monotone} if $f^{-1}(C)$ is connected for any connected
subset $C$ of $X$. In particular, if $f$ is a homeomorphism then it is monotone. Notice that when $X$ is a dendrite, the map $r_{A}$ (above) is monotone.
\medskip
We need the following lemmas.
\begin{lem}[\cite{MS}, Lemma 2.3]\label{l2} Let $X$ be a dendrite,
	$(C_{i})_{i\in\mathbb{N}}$ be a sequence of pairwise disjoint connected subsets of $X$.
	Then $\underset{n\to +\infty}\lim \mathrm{diam}(C_{n})=0$.
\end{lem}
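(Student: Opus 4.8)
The plan is to argue by contradiction and reduce the whole statement to a one-point separation argument that exploits the unique-arc structure of a dendrite. Suppose $\mathrm{diam}(C_n)$ does not tend to $0$. Then there is some $\eps>0$ and, after passing to a subsequence (which I relabel), $\mathrm{diam}(C_n)>\eps$ for every $n$. For each $n$ I would choose points $a_n,b_n\in C_n$ with $d(a_n,b_n)>\eps$. Since $C_n$ is a connected subset of the dendrite $X$, it is arcwise connected, and because any two points of $X$ are joined by a \emph{unique} arc, the arc $[a_n,b_n]$ is contained in $C_n$. In particular the arcs $[a_n,b_n]$ are pairwise disjoint, each of diameter greater than $\eps$.

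Next I would invoke compactness. As $X$ is a compact metric space, after a further passage to a subsequence I may assume $a_n\to a$ and $b_n\to b$ in $X$. By continuity of the metric, $d(a,b)=\lim_n d(a_n,b_n)\ge\eps>0$, so $a\neq b$ and the arc $[a,b]$ is non-degenerate; I then fix any interior point $c\in(a,b)$.

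The heart of the argument is the separation step. In a dendrite every interior point of $[a,b]$ is a cut point separating its endpoints: since the arc joining $a$ to $b$ is unique and passes through $c$, the points $a$ and $b$ lie in two \emph{distinct} connected components $U$ and $V$ of $X\setminus\{c\}$, and because $X$ is locally connected these components are open. As $a_n\to a\in U$ and $b_n\to b\in V$, for all large $n$ one has $a_n\in U$ and $b_n\in V$. Now $[a_n,b_n]$ is a connected set meeting both $U$ and $V$; if it avoided $c$ it would be a connected subset of $X\setminus\{c\}$ intersecting two different components, which is impossible. Hence $c\in[a_n,b_n]\subseteq C_n$ for every large $n$, so the single point $c$ lies in infinitely many of the pairwise disjoint sets $C_n$, a contradiction. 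This yields $\mathrm{diam}(C_n)\to 0$.

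Every step except the separation one is elementary compactness together with the recalled fact that connected subsets of a dendrite are arcwise connected; the delicate point, and the place where the dendrite hypothesis is genuinely used, is the claim that an interior point of $[a,b]$ really disconnects $X$ with $a$ and $b$ in different components, which I would justify from uniqueness of arcs plus local connectedness. The argument also makes transparent why the statement fails for a general Peano continuum — for instance in the square $[0,1]^2$ the pairwise disjoint vertical segments $\{1/n\}\times[0,1]$ all have diameter $1$ — since there an interior point of $[a,b]$ need not separate $a$ from $b$, and the arcs $[a_n,b_n]$ are no longer forced through a common point.
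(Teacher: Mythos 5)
Your argument is correct. Note that the paper does not prove this lemma at all: it is quoted as Lemma 2.3 of Mai--Shi \cite{MS}, so there is no in-paper proof to compare against; your write-up is essentially the standard proof of that cited result. Each step holds in a dendrite: connected subsets are arcwise connected and arcs are unique, so $[a_n,b_n]\subseteq C_n$; the limit arc $[a,b]$ is non-degenerate; and an interior point $c$ of $[a,b]$ does separate $a$ from $b$ (if both lay in one component of $X\setminus\{c\}$, that component would contain an arc from $a$ to $b$ missing $c$, contradicting uniqueness of $[a,b]$), with the components open by local connectedness. The forced passage of all but finitely many $[a_n,b_n]$ through the single point $c$ then contradicts pairwise disjointness. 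The only cosmetic remark is that you could shortcut the separation step by citing the fact, recalled in the paper, that every non-endpoint of a dendrite is a cut point, but your derivation from arc uniqueness is self-contained and equally valid.
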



\begin{lem}[\cite{MS}, Lemma 2.1]\label{diam} Let $X$ be a dendrite with metric $d$. Then for any $\eps>0$ there is $0<\delta<\eps$ such that if $d(x,y)<\delta$ then
	$\mathrm{diam}([x,y])<\eps$.
\end{lem}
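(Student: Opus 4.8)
The key observation is that, in a dendrite, the arc $[x,y]$ is \emph{forced} to lie inside any connected open set that contains both of its endpoints; this is precisely where the dendrite hypothesis does the work, and once it is established the lemma drops out of local connectedness and compactness. So I would organise the proof around first isolating that observation and then exploiting it.

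The plan is to argue by contradiction. Fix $\eps>0$ and suppose no admissible $\delta$ exists. Then for every $n\in\N$ there are points $x_n,y_n\in X$ with $d(x_n,y_n)<1/n$ yet $\mathrm{diam}([x_n,y_n])\ge\eps$. Since $X$ is compact I pass to a subsequence along which $x_n\to p$, and because $d(x_n,y_n)\to 0$ the same subsequence gives $y_n\to p$. Now $X$, being a dendrite, is by definition a locally connected continuum, so $p$ admits a connected open neighbourhood $V$ contained in the ball $B(p,\eps/3)$; in particular $\mathrm{diam}(V)\le 2\eps/3<\eps$. For all sufficiently large $n$ both $x_n$ and $y_n$ lie in $V$.

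It then remains to upgrade membership of the two endpoints to containment of the entire arc. Since $V$ is connected, it is arcwise connected by (\cite{Nadler}, Proposition 10.9), so for large $n$ there is \emph{an} arc inside $V$ joining $x_n$ to $y_n$. But in a dendrite any two distinct points are joined by a \emph{unique} arc; hence this arc must coincide with $[x_n,y_n]$, and therefore $[x_n,y_n]\subset V$. Consequently $\mathrm{diam}([x_n,y_n])\le\mathrm{diam}(V)<\eps$, contradicting $\mathrm{diam}([x_n,y_n])\ge\eps$. This contradiction yields a working $\delta$, and since any smaller positive value works as well we may also require $0<\delta<\eps$. (Alternatively, one can avoid the sequential argument entirely: cover $X$ by connected open sets of diameter $<\eps$, take $\delta$ to be a Lebesgue number of this cover, and run the same arc-uniqueness step on $\{x,y\}$.)

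The only delicate point, and the step I expect to carry the whole argument, is the passage from \textit{the endpoints lie in} $V$ to \textit{the arc lies in} $V$. In a general Peano continuum a small connected neighbourhood could contain a short arc between $x_n$ and $y_n$ while the canonical arc $[x_n,y_n]$ wanders far away, and it is exactly the uniqueness of arcs in a dendrite that forbids this. Everything else (compactness, local connectedness, and arcwise connectedness of connected subsets) is standard and has already been recorded in the preliminaries.
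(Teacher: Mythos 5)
Your proof is correct. Note that the paper does not actually prove this lemma --- it imports it verbatim from \cite{MS} (Lemma 2.1) --- so there is no in-paper argument to compare against; your argument is the standard one and is sound. The two ingredients you isolate are exactly the right ones: local connectedness plus compactness produce a small connected open set $V$ containing both $x$ and $y$, and then arcwise connectedness of connected subsets of a dendrite together with uniqueness of arcs forces $[x,y]\subset V$; both facts are already recorded in the paper's preliminaries, and either the sequential-compactness version or the Lebesgue-number version closes the argument.
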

\medskip

\noindent Theorem 3.3 from \cite{Ar} allows us to deduce the following Lemma:
\medskip

\begin{lem}\label{l12} If $X$ is a dendrite such that $E(X)$ is closed then the order of every branch point is finite.
\end{lem}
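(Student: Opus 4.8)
The plan is to argue by contradiction and exploit Lemma \ref{l2}. Suppose some branch point $b\in B(X)$ has infinite order, so that $X\setminus\{b\}$ has infinitely many connected components, from which I fix a countable subfamily $C_1,C_2,\dots$. Since $X$ is a dendrite and $\{b\}$ is closed, each $C_i$ is open in $X$, and its closure is exactly $\overline{C_i}=C_i\cup\{b\}$, a nondegenerate subdendrite of $X$.

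The heart of the argument is to produce, in each $C_i$, a genuine endpoint $e_i$ of the \emph{whole} dendrite $X$. Because $\overline{C_i}$ is a nondegenerate dendrite it has at least two endpoints; choosing one, say $e_i$, different from $b$ forces $e_i\in C_i$. To check that $e_i\in E(X)$, and not merely an endpoint of $\overline{C_i}$, I write $X\setminus\{e_i\}=(\overline{C_i}\setminus\{e_i\})\cup(X\setminus C_i)$. The first set is connected because $e_i$ is an endpoint of the dendrite $\overline{C_i}$; the second is connected because $X\setminus C_i$ is the union, over the remaining components $C_j$, of the sets $C_j\cup\{b\}=\overline{C_j}$, all of which share the point $b$. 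Since both pieces contain $b$, their union $X\setminus\{e_i\}$ is connected, so $e_i$ is an endpoint of $X$.

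With the endpoints $e_i\in C_i\cap E(X)$ in hand, I invoke Lemma \ref{l2}: the $C_i$ are pairwise disjoint connected subsets of a dendrite, hence $\mathrm{diam}(C_i)\to 0$. Since $b\in\overline{C_i}$ and $e_i\in C_i$, we get $d(e_i,b)\le\mathrm{diam}(\overline{C_i})=\mathrm{diam}(C_i)\to 0$, so $e_i\to b$. Now the hypothesis that $E(X)$ is closed yields $b=\lim_i e_i\in E(X)$, i.e.\ $\mathrm{ord}(b)=1$; this contradicts $b$ being a branch point, for which $\mathrm{ord}(b)\ge 3$. Hence no branch point can have infinite order.

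I expect the main obstacle to be the middle step: verifying that the endpoints extracted from the subdendrites $\overline{C_i}$ are endpoints of $X$ itself rather than artefacts of passing to a subcontinuum. Everything else is a direct application of Lemma \ref{l2} together with the closedness of $E(X)$; it is the connectedness bookkeeping needed to promote a local endpoint to a global one where care is required.
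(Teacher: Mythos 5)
Your argument is correct, and it is genuinely different from what the paper does: the paper offers no proof at all, simply deducing the lemma from Theorem~3.3 of the cited work of Arevalo, Charatonik, Covarrubias and Simon on dendrites with a closed set of endpoints, whereas you give a short self-contained proof using only Lemma~\ref{l2} and the definitions. Each step checks out: the components $C_i$ of $X\setminus\{b\}$ are open because a dendrite is locally connected, $\overline{C_i}=C_i\cup\{b\}$ is a nondegenerate subdendrite (so it has at least two non-cut points, hence an endpoint $e_i\neq b$), and your decomposition $X\setminus\{e_i\}=(\overline{C_i}\setminus\{e_i\})\cup(X\setminus C_i)$ into two connected sets meeting at $b$ correctly promotes $e_i$ to an endpoint of $X$ — this is indeed the only delicate point, and you handle it properly (note that $X\setminus C_i=\{b\}\cup\bigcup_{j}C_j$ over \emph{all} remaining components, not just the countably many you selected, but the connectedness argument is unaffected). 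The conclusion via Lemma~\ref{l2} and closedness of $E(X)$, giving $b\in E(X)$ against $\mathrm{ord}(b)\geq 3$, is clean. What your approach buys is independence from the external reference; what the paper's citation buys is brevity and access to the stronger structural statements of that theorem (e.g.\ discreteness of $B(X)$, which the paper also quotes elsewhere).
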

\medskip
Applying Dirichlet's theorem on primes in arithmetical progressions (see \cite[p.146]{Apostol}), it easy to see that 
if ($x_n )_{n> 0}$ is an eventually periodic sequence (i.e. $x_n = x_{n+m}$ for some 
fixed number $m\in \mathbb{N}$) and for any $n\geq n_0$), then 
	\begin{eqnarray}\label{l11}
		\frac{1}{N} \sum_{n=1}^N \bmu(n)x_n = o(1).
	\end{eqnarray}
We also need the following lemma from \cite{Ka}.
\begin{lem}\label{l13} Let ($x_n )_{n> 0}$ be a sequence of real numbers such that $|x_n| \leq 1$ for any 
$n \geq 1$. Assume that there is $n_0 , k>0$ such that for any $n,m \geq n_0$, if  $x_n \neq x_m$, then 
$|n-m| \geq k$. Then we have $$\limsup_{N\to +\infty} \Big|\frac{1}{N} \sum_{n=1}^N x_n \Big| \leq \frac{1}{k} .$$
\end{lem}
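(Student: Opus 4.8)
The plan is to prove the estimate by a density/counting argument rather than by any delicate cancellation: I will isolate the indices that actually contribute to the sum, use the separation hypothesis to show these are $k$-separated (hence of upper density at most $1/k$), and then conclude immediately from $|x_n|\le 1$. The point is that a bound of the precise form $1/k$ cannot come from ``slow variation'' alone but only from the active indices being sparse, so the whole argument is really a pigeonhole estimate on how many such indices can fit into $\{1,\dots,N\}$.

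First I would discard the initial segment, which is harmless for a Ces\`aro limit. Since each of the first $n_0-1$ terms has modulus at most $1$,
\[
\Big|\frac1N\sum_{n=1}^{N}x_n\Big|\le\frac{n_0-1}{N}+\Big|\frac1N\sum_{n=n_0}^{N}x_n\Big|,
\]
and the first summand tends to $0$ as $N\to+\infty$. Thus it suffices to control the tail average $\frac1N\sum_{n=n_0}^{N}x_n$.

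The heart of the matter is to read the separation hypothesis as a sparsity statement about the active indices. Put $E_N=\{\,n_0\le n\le N:\ x_n\neq 0\,\}$, the set of indices that genuinely carry a contribution. The hypothesis forces any two distinct elements of $E_N$ to lie at distance at least $k$; enumerating $E_N=\{a_1<a_2<\dots<a_t\}$ and summing the gaps $a_{i+1}-a_i\ge k$ gives $(t-1)k\le a_t-a_1\le N-n_0$, whence
\[
\#E_N\le\frac{N-n_0}{k}+1.
\]
This is the step I expect to be the only real obstacle: one must extract from the stated condition precisely which indices are forced to be $k$-separated, and verify that no two of them can sit in a common window of length less than $k$. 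Once that is in hand the estimate is automatic.

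Finally, the terms with $n\in\{n_0,\dots,N\}\setminus E_N$ vanish, so
\[
\Big|\frac1N\sum_{n=n_0}^{N}x_n\Big|=\Big|\frac1N\sum_{n\in E_N}x_n\Big|\le\frac{\#E_N}{N}\le\frac1k+\frac1N\Big(1-\frac{n_0}{k}\Big),
\]
and letting $N\to+\infty$ yields $\limsup_{N}\big|\frac1N\sum_{n=1}^{N}x_n\big|\le\frac1k$, as claimed. The bound is sharp in the evident way---one active value of size $1$ in each block of $k$ consecutive indices---which is exactly why the constant $1/k$, and not something smaller, appears in the conclusion.
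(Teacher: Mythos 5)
Your counting strategy is the natural one, and for what it is worth the paper offers no proof of its own to compare against: Lemma \ref{l13} is imported verbatim from \cite{Ka}. So the only question is whether your deduction goes through, and it breaks down at exactly the step you yourself flagged as the crux. From the stated hypothesis ``$x_n\neq x_m\Rightarrow|n-m|\ge k$'' you cannot conclude that $E_N=\{\,n_0\le n\le N:\ x_n\neq0\,\}$ is $k$-separated: two \emph{nonzero} terms may perfectly well be \emph{equal}, and then the hypothesis says nothing about the distance between their indices. Concretely, the constant sequence $x_n\equiv1$ satisfies the hypothesis vacuously for every $k$, yet $E_N$ is a full interval and the averages converge to $1$; so your separation claim, and indeed the lemma as literally stated, are false. (Worse, for $k\ge2$ the literal hypothesis applied to $m=n+1$ forces $x_n=x_{n+1}$ for every $n\ge n_0$, so the only sequences it admits at all are eventually constant ones, for which the conclusion fails unless the constant has modulus at most $1/k$.)

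What you have actually proved, correctly, is the lemma with the hypothesis replaced by: for all $n\neq m$ with $n,m\ge n_0$, if $x_n\neq0$ and $x_m\neq0$ then $|n-m|\ge k$; that is, the support of the tail is $k$-separated. That is precisely the version the paper uses: in the proofs of Proposition \ref{C1} and Theorem \ref{t42} the sequences fed into Lemma \ref{l13} have the form $\bmu(n)\,\psi_{U_j}(f^n(x))\,\chi_{A}(f^n(x))$, whose nonzero terms occur only at the $k$-separated return times of the orbit to a piece $A$ of a cycle. Under that corrected hypothesis your gap count $(t-1)k\le a_t-a_1\le N-n_0$, the bound $\#E_N\le (N-n_0)/k+1$, and the passage to the limit are all fine, and the extremal example you mention shows the constant $1/k$ is sharp. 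The gap, then, is not in the counting but in the bridge from the hypothesis as written to the sparsity of $E_N$: that implication is simply not valid, and the honest fix is to restate the hypothesis as support-sparsity before running your argument.
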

We shall use the following useful property of $\omega$-limit set. 
\begin{lem}[\cite{Aus}, Theorem 3, p. 67] \label{AU}  Let ($X, f$) be a dynamical system. 
Then for each $x\in X$, there exists an almost periodic point $y\in \omega_f(x)$ such that $(x,y)$ is a proximal pair.
\end{lem}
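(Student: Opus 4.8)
The plan is to reprove this classical Auslander--Ellis statement via the enveloping (Ellis) semigroup of the system together with Ellis's idempotent lemma. First I would form $E=\overline{\{f^n:n\geq 0\}}$, the closure in $X^X$ (with the product topology) of the family of iterates. Then $E$ is a compact semigroup under composition in which every right multiplication $p\mapsto p\circ q$ is continuous, and for any $z\in X$ the orbit closure is exactly $Ez=\{pz:p\in E\}$, with $z$ almost periodic precisely when $Ez$ is minimal. These identities reduce the three requirements on $y$ (membership in $\omega_f(x)$, almost periodicity, proximality to $x$) to algebraic facts about a single well-chosen element of $E$.

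The key structural input is that $E$ carries an idempotent sitting in the right place. I would check that for each $N$ the set $\overline{\{f^n:n\geq N\}}$ is a closed two-sided ideal of $E$, so that the adherence semigroup $A=\bigcap_{N}\overline{\{f^n:n\geq N\}}$ is a nonempty closed subsemigroup containing the minimal two-sided ideal $K(E)$. By Ellis's lemma a minimal left ideal $I\subseteq K(E)$ contains an idempotent $u$, and since $u\in I\subseteq A$ we may write $u=\lim_{\alpha}f^{n_\alpha}$ with $n_\alpha\to\infty$. I then set $y=ux$; because $u\in A$ this already gives $y=\lim_{\alpha}f^{n_\alpha}(x)\in\omega_f(x)$.

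It remains to verify the two defining properties of $y$. For almost periodicity I would show that $Ix$ is a minimal subset of $X$: for any $p\in I$ the continuity of right multiplication shows $Ep$ is a closed left ideal contained in $I$, whence $Ep=I$ by minimality, so every point $px\in Ix$ has orbit closure $Epx=Ix$; since $y=ux\in Ix$, the point $y$ is almost periodic. For proximality I would use that $u$ is idempotent, giving $ux=u(ux)=u^{2}x=ux$, i.e. $ux=uy=y$; passing to the net $u=\lim_{\alpha}f^{n_\alpha}$ then yields $d(f^{n_\alpha}x,f^{n_\alpha}y)\to d(y,y)=0$, so that $(x,y)$ is proximal.

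The main obstacle is the semigroup bookkeeping rather than any single estimate: one must be careful that left multiplication by a \emph{general} element of $E$ need not be continuous (only right multiplication is), which is exactly why $\overline{\{f^n:n\geq N\}}$ must be verified to be a two-sided ideal and why the idempotent has to be extracted from $A$ --- this is what forces $y\in\omega_f(x)$ and not merely $y\in\overline{\mathrm{Orb}_f(x)}$. An alternative, semigroup-free route is Furstenberg's argument: choose a minimal set $M\subseteq\omega_f(x)$ by Zorn's lemma, take a minimal subset $K$ of $\overline{\mathrm{Orb}_{f\times f}(x,y_0)}$ for some $y_0\in M$, and exploit the minimality of $K$ to produce a diagonal point, thereby recovering an almost periodic $y$ proximal to $x$; I would nonetheless favour the Ellis approach, since it pins down membership in $\omega_f(x)$ most cleanly.
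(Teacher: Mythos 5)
The paper offers no proof of this lemma---it is quoted directly from Auslander's book---so there is nothing internal to compare against; your argument is the standard enveloping-semigroup proof of the Auslander--Ellis theorem (essentially the one in the cited source) and is correct. The one refinement you supply, extracting the minimal idempotent $u$ from the adherence semigroup $A=\bigcap_{N}\overline{\{f^{n}:n\geq N\}}$ so that $y=ux$ lands in $\omega_f(x)$ rather than merely in $\overline{\mathrm{Orb}_f(x)}$, is exactly what the $\mathbb{Z}_+$-action setting of the paper requires, and your check that each $\overline{\{f^{n}:n\geq N\}}$ is a two-sided ideal (using that left multiplication by the continuous maps $f^{m}$ \emph{is} continuous, even though left multiplication by a general element of $E$ need not be) is what legitimizes the inclusion $K(E)\subseteq A$.
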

For the asymptotic pair we have
\begin{lem}\label{l12} Let ($X, f$) be a dynamical system and let $x, y  \in X$. If  $S_N(x,\varphi) =
	o(1)$ and $(x, y)$ is asymptotic then $S_N(y,\varphi) = o(1)$.
\end{lem}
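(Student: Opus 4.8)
The plan is to reduce everything to the difference of the two averages and to control that difference by the uniform continuity of $\varphi$. Since $X$ is a compact metric space and $\varphi$ is continuous, $\varphi$ is uniformly continuous on $X$; and since the pair $(x,y)$ is asymptotic, $d(f^n(x),f^n(y))\to 0$ as $n\to+\infty$. The strategy is to prove that $S_N(y,\varphi)-S_N(x,\varphi)=o(1)$, which combined with the hypothesis $S_N(x,\varphi)=o(1)$ immediately yields $S_N(y,\varphi)=o(1)$.

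First I would write the difference as
$$S_N(y,\varphi)-S_N(x,\varphi)=\frac{1}{N}\sum_{n=1}^N\bmu(n)\bigl(\varphi(f^n(y))-\varphi(f^n(x))\bigr),$$
and, using the trivial bound $|\bmu(n)|\le 1$, estimate
$$\bigl|S_N(y,\varphi)-S_N(x,\varphi)\bigr|\le \frac{1}{N}\sum_{n=1}^N\bigl|\varphi(f^n(y))-\varphi(f^n(x))\bigr|.$$

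Next, fix $\eps>0$. By uniform continuity choose $\delta>0$ so that $d(u,v)<\delta$ implies $|\varphi(u)-\varphi(v)|<\eps$; by the asymptotic hypothesis choose $n_0$ so that $d(f^n(x),f^n(y))<\delta$ for all $n\ge n_0$, whence $|\varphi(f^n(y))-\varphi(f^n(x))|<\eps$ for $n\ge n_0$. Splitting the last sum at $n_0$, the initial block $\sum_{n=1}^{n_0-1}|\varphi(f^n(y))-\varphi(f^n(x))|$ is a fixed finite quantity (bounded, say, by $2n_0\norm{\varphi}_\infty$), so divided by $N$ it tends to $0$; the tail contributes at most $\frac{1}{N}\sum_{n=n_0}^N\eps\le\eps$. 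Hence $\limsup_{N\to+\infty}\bigl|S_N(y,\varphi)-S_N(x,\varphi)\bigr|\le\eps$, and letting $\eps\to 0$ gives $S_N(y,\varphi)-S_N(x,\varphi)\to 0$, as desired.

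There is no serious obstacle here: the whole argument is an $\eps/2$-type splitting into a negligible head and a uniformly small tail, the only inputs being the compactness of $X$ (for uniform continuity of $\varphi$) and the definition of an asymptotic pair. The one point worth stating carefully is that $\bmu$ enters solely through $|\bmu(n)|\le 1$, so no cancellation or arithmetic property of the M\"obius function is used; the conclusion would in fact hold verbatim with any uniformly bounded sequence in place of $\bmu$.
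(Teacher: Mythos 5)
Your proof is correct and follows essentially the same route as the paper's: bound $|S_N(x,\varphi)-S_N(y,\varphi)|$ using $|\bmu(n)|\le 1$, invoke uniform continuity of $\varphi$ together with the asymptotic hypothesis to make the tail uniformly small, and absorb the finite head after dividing by $N$. The only difference is cosmetic bookkeeping ($\eps$ versus $\eps/2$ splitting), so there is nothing to add.
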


\begin{proof}
	Let $\varphi:~X \to \mathbb{R}$ be a continuous map. Fix $\varepsilon >0$. The map $\varphi$ is uniformly continuous on $X$,
	then there is $\alpha>0$ such that for any $u, \ v\in X$ with $d(u,v)<\alpha$, 
	$|\varphi(u)-\varphi(v)|<\frac{\varepsilon}{2}$. 
	Since $\underset{n \to +\infty} \lim d(f^n(x),f^n(y)) = 0$, there is $n_0 >0$ such that for 
	$n\geq n_0$, $d(f^n(x),f^n(y))<\alpha$. So $|\varphi(f^n(x))-\varphi(f^n (y))|<\frac{\varepsilon}{2}$ 
	for any $n \geq n_0$. 
	Let $n_1 \geq n_0$ be such that for any 
	$N > n_1$, $$\frac{1}{N}\sum_{n=1}^{n_0 -1} |\varphi(f^n(x))-\varphi(f^n(y))|<\frac{\varepsilon}{2}.$$ 
	Then for any $N>n_1$, we have
	\begin{align*}
	|S_N (x,\varphi)-S_N (y,\varphi)| & = 
	\Big|\frac{1}{N}\sum_{n=1}^N \bmu(n)(\varphi(f^n(x))-\varphi(f^n(y)))\Big|\\
	& \leq \frac{1}{N}\sum_{n=1}^{n_0 -1}|\varphi(f^n(x))-\varphi(f^n(y))| \\ 
	& + 
	\frac{1}{N}\sum_{n=n_0}^{N} |\varphi(f^n(x))-\varphi(f^n(y))|\\ 
	& < \frac{\varepsilon}{2}+\frac{\varepsilon}{2} = \varepsilon.
	\end{align*}
	Hence $\underset{N \to +\infty}\lim \Big|S_N (x,\varphi)- S_N (y,\varphi)\Big| = 0$. Since 
	$S_N(x,\varphi) = o(1)$, so \\ $S_N(y,\varphi) = o(1)$.
	This completes the proof.
\end{proof}

\section{\bf The case of graph maps}
The aim of this section is to prove the following theorem:

\begin{thm}\label{t31} Let $G$ be a graph and $f: ~G \to G$ be a continuous map with zero topological entropy.  Then $(1.1)$ holds.
\end{thm}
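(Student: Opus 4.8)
The plan is to fix $x \in G$ and a continuous $\varphi : G \to \mathbb{R}$ and, since $S_N(x,\cdot)$ is linear and $G$ is compact, to normalize $\|\varphi\|_\infty \le 1$ so that all itinerary sequences below are bounded by $1$. The entire argument is then organized around the $\omega$-limit set $\omega_f(x)$, which for a zero-entropy graph map is severely constrained: invoking the structure theory of zero-entropy graph maps, $\omega_f(x)$ is either (i) a single periodic orbit, or (ii) an infinite (minimal) set carried by a nested sequence of periodic subgraphs with periods $p_1 \mid p_2 \mid \cdots \to \infty$, apart from the genuinely ``circular'' minimal sets that can occur when $G$ contains a circle. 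Producing or quoting this dichotomy in the precise form needed is the principal structural input.

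In case (i), write $\omega_f(x) = \{p, f(p), \dots, f^{q-1}(p)\}$. I would first note that $d(f^n(x), \omega_f(x)) \to 0$ always holds, and since the $q$ points are isolated, for large $n$ the iterate $f^n(x)$ lies in a small ball around exactly one of them; continuity of $f$ forces the index of this nearest point to advance by one at each step, so that $d(f^n(x), f^n(p')) \to 0$ for a suitable $p' \in \omega_f(x)$. Thus $(x,p')$ is an \emph{asymptotic} pair. Here it is worth stressing that Lemma~\ref{AU} only furnishes a \emph{proximal} almost periodic point, which is too weak to transfer the average, so the stronger asymptotic relation must be produced by hand as above. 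The sequence $\big(\varphi(f^n(p'))\big)_n$ is $q$-periodic, hence $S_N(p',\varphi) = o(1)$ by \eqref{l11}; applying the asymptotic-pairs lemma (Lemma~\ref{l12}) then gives $S_N(x,\varphi) = o(1)$.

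In case (ii), fix $\varepsilon>0$ and let $\alpha>0$ be a modulus of uniform continuity of $\varphi$ for $\varepsilon$. For $j$ large the $p_j$ pieces at level $j$ are pairwise disjoint connected sets, so their diameters tend to $0$ as $j\to\infty$ (cf.\ Lemma~\ref{l2}) and hence are eventually $<\alpha$; since $f$ cyclically permutes them with period $p_j$ and the orbit of $x$ eventually tracks this cycle, the sequence $n \mapsto \varphi(f^n(x))$ agrees, up to error $\varepsilon$ and for all large $n$, with a genuinely $p_j$-periodic sequence $\psi_j$ (the $\varphi$-value of a chosen point in the piece visited at time $n$). Consequently $\big|S_N(x,\varphi) - \tfrac1N\sum_{n\le N}\bmu(n)\psi_j(n)\big| \le \varepsilon + o(1)$, while $\tfrac1N\sum_{n\le N}\bmu(n)\psi_j(n) = o(1)$ by \eqref{l11} applied to $\psi_j$. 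Letting $\varepsilon \to 0$ yields $S_N(x,\varphi) = o(1)$. (Alternatively, since the coarse itinerary changes only across pieces, one may bound the averages directly by $1/p_j \to 0$ through Lemma~\ref{l13}.)

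The hard part will be the structure theorem itself, and in particular the circular minimal sets: when $G$ contains a circle, $\omega_f(x)$ may be an infinite minimal set on which $f$ behaves like an irrational rotation, and then no nested periodic-subgraph decomposition exists, so the periodic-approximation step of case (ii) breaks down. I would handle this by showing that such a restriction is (semi-)conjugate to an orientation-preserving circle homeomorphism and reducing to the corresponding result of Karagulyan~\cite{Ka} (equivalently, to the classical M\"obius disjointness of rotations). Combining this with the two cases above, and using Lemma~\ref{diam} to pass between closeness of points and smallness of the connecting arcs wherever that is needed, completes the proof of Theorem~\ref{t31}.
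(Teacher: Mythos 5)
Your overall skeleton (finite $\omega$-limit set / solenoid / remaining infinite case) matches the paper's, and your treatment of the finite case is fine. But there are two genuine gaps. First, in the solenoid case you assert that the diameters of the $p_j$ pieces at level $j$ tend to $0$, citing Lemma~\ref{l2}. That lemma concerns a single sequence of pairwise disjoint connected sets; pieces at different levels are nested, not disjoint, and within a fixed level it only yields that all but finitely many pieces are small --- the \emph{maximal} diameter at level $j$ need not tend to $0$ (a connected component of $\bigcap_j X_j$ can be a nondegenerate subgraph even though $\omega_f(x)$ is nowhere dense). So the claimed $\varepsilon$-approximation of $n\mapsto\varphi(f^n(x))$ by a $p_j$-periodic sequence fails. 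The paper avoids this entirely: it approximates $\varphi$ uniformly by finite linear combinations of the indicator-like functions $\psi_U$ of free arcs $U$, observes that at most two of the $k_r$ pieces can meet $U$ without being contained in it (an arc has only two boundary points), and controls those two via the gap Lemma~\ref{l13}, obtaining the bound $2/k_r\to 0$. Your parenthetical ``alternatively \dots\ Lemma~\ref{l13}'' gestures at this, but Lemma~\ref{l13} does not apply to the raw sequence $\varphi(f^n(x))$, only to such coarse indicator sequences.

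Second, your ``hard part'' is broader than circles, and the proposed reduction does not close. When $\omega_f(x)$ is infinite and not a solenoid, the paper shows (Lemmas~\ref{Min}, \ref{inf}, \ref{basic}) that the minimal cycle of graphs $X\supset\omega_f(x)$ contains no periodic point (otherwise $E(X,f)$ would be a basic set and $h(f)>0$), and then invokes Mai--Shao: a graph map without periodic points is a null system, hence tame, hence M\"{o}bius disjoint by Huang--Wang--Ye. Your plan --- semiconjugate the restriction to an orientation-preserving circle homeomorphism and quote Karagulyan --- transfers disjointness in the wrong direction: M\"{o}bius disjointness of a \emph{factor} does not imply it for the extension, and the relevant subgraph need not be a circle. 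You also do not explain how to pass from points of $\omega_f(x)$ (or of $X$) back to $x$ itself in this case; as you yourself stress, proximality is too weak, and zero-entropy graph maps may have Li--Yorke pairs. The paper's trick is that $f(X)=X$ and $f^s(x)\in X$ produce some $y\in X$ with $f^s(y)=f^s(x)$, so $(x,y)$ is trivially asymptotic and the asymptotic-pairs Lemma~\ref{l12} applies.
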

\medskip

\noindent{}Let us recall the following:

\begin{prop}[\cite{HM}]\label{p33}
	Any $\omega$-limit set of a graph map is either finite set, or an infinite closed nowhere dense set or a 
	finite union of non-degenerate subgraphs (which form a cycle of graphs).
\end{prop}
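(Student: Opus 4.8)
The plan is to classify $W:=\omega_f(x)$ according to whether or not it has nonempty interior in $G$. First I would record the structural facts valid for every $\omega$-limit set in a compact space: $W$ is nonempty, closed (hence compact) and \emph{strongly} invariant, $f(W)=W$, the nontrivial inclusion $W\subseteq f(W)$ coming from the usual compactness argument (if $f^{n_i}(x)\to y$, a subsequential limit $z$ of $f^{n_i-1}(x)$ lies in $W$ and satisfies $f(z)=y$). I would also use that $W$ is internally chain transitive: for all $a,b\in W$ and $\eps>0$ there is a finite $\eps$-pseudo-orbit inside $W$ joining $a$ to $b$. This recurrence property is what eventually forces the single-cycle structure in the third alternative.

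The decisive split is on $\mathrm{int}_G(W)$. If $\mathrm{int}_G(W)=\emptyset$, then $W$ is closed with empty interior, hence nowhere dense, and according to its cardinality it is either a finite set (which, being a finite $\omega$-limit set of a continuous map, is a single periodic orbit) or an infinite closed nowhere dense set; these are the first two alternatives. Thus the whole content of the statement lies in the case $\mathrm{int}_G(W)\neq\emptyset$, where I must produce the cycle of nondegenerate subgraphs.

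In that case set $U=\mathrm{int}_G(W)$ and $Y=\overline{U}$. Each connected component of $Y$ is a subcontinuum of $G$, hence a subgraph by (\cite{Nadler}, Corollary 9.10.1), and it is nondegenerate since it contains a nonempty open subset of $G$. The proof then rests on two claims. The \emph{filling} claim is $W=Y$, i.e. $W$ has no residual nowhere-dense part outside the closure of its interior; I would prove it by taking a solid subarc $J\subset U$ lying in the interior of a single edge, noting that its forward images $f^n(J)$ are nondegenerate subcontinua contained in $W$, and using the expansion of arcs under one-dimensional maps together with internal chain transitivity to show that $\overline{\bigcup_n f^n(J)}$ is solid and exhausts $W$. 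The \emph{finiteness} claim is that $Y$ has finitely many components; here I would combine the finiteness of the edge set of $G$ with the graph analogue of Lemma \ref{l2} (pairwise disjoint connected pieces of a graph have diameters tending to $0$) and the fact that, under the dynamics of the previous claim, the components carry a uniform lower bound on their diameter, so only finitely many can occur. Write $Y=Y_0\cup\dots\cup Y_{k-1}$.

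It remains to organise the $Y_i$ into a cycle. Since $f(W)=W$ and $f$ is continuous, $f$ carries each component of $Y$ into a component of $Y$, hence induces a map on the finite set $\{Y_0,\dots,Y_{k-1}\}$; because the orbit generating $W$ returns infinitely often to the solid seed $J$, this induced map has a periodic component, and internal chain transitivity of $W$ forbids more than one cycle, so after reindexing $f(Y_i)=Y_{i+1\bmod k}$. Each $Y_i$ is then a nondegenerate subgraph equal to an $\omega$-limit set of $f^k$ with nonempty interior, and $W=\bigcup_i Y_i$ is the announced cycle of graphs. The main obstacle is the pair of claims in the nonempty-interior case, above all the filling claim together with the finiteness of the decomposition: showing that a single solid seed spreads through \emph{all} of $W$ leaving \emph{finitely many} subgraphs and no thin remainder is the genuinely one-dimensional input, and I would secure it through Blokh's spectral decomposition for graph maps rather than by an elementary argument.
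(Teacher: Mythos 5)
There is no internal proof to compare you against: the paper states Proposition \ref{p33} as a quoted result of Hric and Malek \cite{HM} and proves nothing itself, and the engine behind \cite{HM} is Blokh's spectral decomposition theory for graph maps --- precisely the machinery you fall back on in your last sentence. So your proposal does not take a different route from the paper; it reconstructs the elementary reduction surrounding the deep step and then delegates the deep step to the same source the paper cites. The frame of that reduction is correct: the dichotomy on $\mathrm{int}_G(W)$ is the right one (a closed set with empty interior is nowhere dense, and conversely any nondegenerate subgraph of $G$ contains a subarc free of the finitely many branch points, hence has nonempty interior in $G$, so the third alternative occurs exactly when $\mathrm{int}_G(W)\neq\emptyset$), and your assembly of the cycle from the filling and finiteness claims --- the induced map on finitely many components is a permutation since $f(W)=W$, internal chain transitivity forces a single cycle, and surjectivity upgrades $f(Y_i)\subseteq Y_{i+1}$ to equality --- is sound.

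Two concrete flags, one local and one central. Locally, your assertion that each component of $Y=\overline{\mathrm{int}_G(W)}$ is nondegenerate ``since it contains a nonempty open subset of $G$'' is false at that stage of the argument: the closure of an open set can have degenerate components (take infinitely many pairwise disjoint open arcs accumulating on a point), so nondegeneracy can only be extracted \emph{after} finiteness, and the order of your claims must be rearranged accordingly. Centrally, the elementary sketch you offer for the filling and finiteness claims does not stand on its own. Exhaustion is in fact cheap and cheaper than your spreading argument: since $J\subseteq W$ has nonempty interior in $G$, the orbit of $x$ enters $J$ at some time $n_1$ and then every $f^m(x)$ with $m\geq n_1$ lies in $\bigcup_{t\geq 0}f^t(J)\subseteq W$, whence $W=\overline{\bigcup_{t\geq 0}f^t(J)}$. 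But this closure could a priori still carry a thin nowhere dense remainder and infinitely many small components: the graph analogue of Lemma \ref{l2} only gives diameters tending to $0$ for disjoint pieces, while the ``uniform lower bound on the diameters of the components'' that you invoke to conclude finiteness is exactly the content of the theorem, not an available input (your phrase ``under the dynamics of the previous claim'' is circular), and images of a free arc under a graph map need not expand --- they can contract, and controlling this is the genuinely one-dimensional difficulty. Without the appeal to Blokh's decomposition your proof has a real hole at precisely this point; with that appeal it is correct, but it then amounts to the same citation the paper already makes.
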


\begin{defn}[\cite{RS}]
	Let  $f:~G \to G$ be a graph map. A subgraph $K$ of $G$ is called \emph{periodic} of period $k \geq 1$, if
	$K, f(K),\dots,f^{k-1}(K)$ are pairwise disjoint and $f^k (K)=K$. The set Orb$(K)=\cup_{i=0}^{k-1}f^i (K)$ is called a
	\emph{cycle of graphs}. 
\end{defn}

For an infinite $\omega$-limit set $\omega_f(x)$, we let
$$\mathcal{C}(x): = \Big\{X: X\subset G \text{ is a cycle of graphs and } \omega_f(x) \subset X \Big\}.$$ 
The set $\mathcal{C}(x)$ is non-empty by \Big((\cite{RS}, Lemma 9, i), since $f(G) \subset G$\Big).

\begin{defn} An infinite $\omega$-limit set $\omega_f(x)$ is called a \emph{solenoid} whenever 
the periods of the cycles in $\mathcal{C}(x)$ are unbounded.
\end{defn}

Notice that if $\omega_f(x)$ is solenoid, then it is nowhere dense (by Proposition \ref{p33}).
\medskip

\textbf{Case 1}: $\omega_f(x)$ is a solenoid.

\begin{lem} \label{dich}$($\cite{RS}, Lemma 10$)$
	Let $f:~G \to G$ be a graph map and let $\omega_f(x)$ be an infinite $\omega$-limit set. If $\omega_f(x)$ is a solenoid, then there exists a sequence
	of cycles of graphs $(X_n)_{n \geq 1}$ with  increasing periods $(k_n)_{n\geq 1}$ such that, for
	all $n \geq 1$, $X_{n+1} \subset X_n$ and $\omega_f (x) \subset
	\underset{n \geq 1}\bigcap X_n$. Moreover, for all $n \geq 1$, $k_{n+1}$ is
	a multiple of $k_n$ and every connected component of $X_n$ contains the same number (equal to $\frac{k_{n+1}}{k_n}\geq 2$) 
	of components of $X_{n+1}$. Furthermore, $\omega_f (x)$ contains no
	periodic point.
\end{lem}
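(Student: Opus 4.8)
The plan is to build the nested sequence $(X_n)$ by a renormalization argument and then read off every quantitative conclusion from a single structural observation about nested cycles. First I would isolate that observation: if $X'\subseteq X$ are both cycles of graphs in $\mathcal{C}(x)$, with periods $k'$ and $k$, then $k\mid k'$ and every component of $X$ contains exactly $k'/k$ components of $X'$. To see this, note first that $\omega_f(x)$ meets \emph{every} component of any cycle in $\mathcal{C}(x)$: writing the components of $X$ as $X^{(0)},\dots,X^{(k-1)}$ with $f(X^{(i)})=X^{(i+1\bmod k)}$, the set of indices $i$ with $\omega_f(x)\cap X^{(i)}\neq\emptyset$ is nonempty and, using the strong invariance $f(\omega_f(x))=\omega_f(x)$, is closed under $i\mapsto i+1$, hence is all of $\mathbb{Z}/k$. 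Consequently the map $\pi$ sending each component of $X'$ to the unique component of $X$ containing it is well defined and surjective, and since $\pi(f(C))$ is the successor of $\pi(C)$, it intertwines the two transitive cyclic successor maps on the $k'$ components of $X'$ and on the $k$ components of $X$. A surjective equivariant map between the transitive cyclic $\mathbb{Z}$-sets $\mathbb{Z}/k'$ and $\mathbb{Z}/k$ is, after relabelling, reduction modulo $k$, which forces $k\mid k'$ and equal fibres of size $k'/k$.

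Granting this, the three listed conclusions reduce to producing a nested sequence $X_1\supseteq X_2\supseteq\cdots$ in $\mathcal{C}(x)$ with strictly increasing periods $k_n$. Indeed, the divisibility of $k_{n+1}$ by $k_n$ and the common branching number $k_{n+1}/k_n$ come from the structural observation; this branching number is an integer exceeding $1$, hence $\geq 2$, precisely because the periods strictly increase, and therefore $k_{n+1}\geq 2k_n$ so that $k_n\to\infty$. The absence of periodic points is then immediate: a periodic point $p\in\omega_f(x)$ of period $q$ lies in a single component $X_n^{(a)}$ for each $n$, and $f^q(p)=p\in X_n^{(a)}$ forces $k_n\mid q$, which is impossible once $k_n>q$.

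The core is therefore the inductive step: given $X_n\in\mathcal{C}(x)$ of period $k_n$, find a cycle $X_{n+1}\in\mathcal{C}(x)$ with $X_{n+1}\subseteq X_n$ and period $>k_n$. I would pass to the first-return dynamics on one component: put $K=X_n^{(0)}$ and $g=f^{k_n}|_K\colon K\to K$, a graph map of the subgraph $K$. One checks that $A:=\omega_f(x)\cap K$ is an infinite $\omega$-limit set for $(K,g)$; infiniteness follows because $f$ maps $\omega_f(x)\cap X^{(i)}$ onto $\omega_f(x)\cap X^{(i+1)}$, so all these sets are infinite once their union is. The solenoid hypothesis is then inherited: from an $f$-cycle of large period $l$ containing $\omega_f(x)$ one induces on $K$, via the return map, a $g$-cycle containing $A$ whose $g$-period grows with $l$; in particular there is a $g$-cycle $\{L,gL,\dots,g^{p-1}L\}$ in $K$ of period $p\geq 2$ with $A\subseteq\bigsqcup_{j<p}g^{j}L$. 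Spreading this around by $f$ inside $X_n$ produces the desired cycle $X_{n+1}\subseteq X_n$ of period $k_np>k_n$, closing the induction.

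The main obstacle is exactly this last construction, for two related reasons. First, the solenoid property must be transported from $f$ to the induced map $g$ while keeping the resulting cycle \emph{inside} $X_n$: an arbitrary $f$-cycle of large period need not be contained in $X_n$, so one cannot simply restrict it, and the period-$\geq 2$ $g$-cycle must be produced intrinsically on $K$, using that $A$ is an infinite $\omega$-limit set of $(K,g)$ together with the trichotomy of Proposition \ref{p33} applied to $(K,g)$ (ruling out that $A$ reduces to a single $g$-invariant subgraph). Second, because $f$ is only continuous and not injective, the forward images $f^{i}(g^{j}L)$ lying in the intermediate components $X_n^{(i)}$ may overlap, so that ``spreading $L$ by $f$'' could collapse components and destroy the disjointness demanded of a cycle of graphs. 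I would control this by choosing the renormalization deep enough that its pieces have small diameter, Lemma \ref{l2} guaranteeing that pairwise disjoint connected pieces shrink, and by exploiting that a solenoidal $\omega_f(x)$ is nowhere dense, so that the finitely many pieces can be separated by disjoint subgraph neighbourhoods; this secures the genuine period $k_np$ and hence the strict increase of periods that drives the induction.
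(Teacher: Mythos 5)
The paper offers no proof of this statement: it is imported verbatim from \cite{RS} (Lemma 10), which in turn rests on Blokh's spectral decomposition for graph maps, so there is no in-paper argument to measure yours against. The soft parts of your proposal are correct and cleanly done: that $\omega_f(x)$ meets every component of any cycle in $\mathcal{C}(x)$, that the induced equivariant surjection between the cyclic component sets of nested cycles forces $k\mid k'$ with constant fibres of size $k'/k$, and that the divisibility, the branching number $\geq 2$, the divergence of the periods and the absence of periodic points all follow once one has a nested sequence of cycles with strictly increasing periods.

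The genuine gap is the inductive step, which is the whole content of the lemma, and two of its links do not hold as written. First, to get a $g$-cycle of period $p\geq 2$ on $K=X_n^{(0)}$ for $g=f^{k_n}|_K$ you propose either to ``induce'' an $f$-cycle of large period on $K$, or to invoke the classification of $\omega$-limit sets of graph maps. But a cycle $Y\in\mathcal{C}(x)$ of large period need not be comparable with $X_n$, so inducing it on $K$ already presupposes a common-refinement statement (any two cycles in $\mathcal{C}(x)$ contain a third one in $\mathcal{C}(x)$) --- which is essentially the assertion being proved and is the content of Lemma 9 of \cite{RS}; and the trichotomy only describes the topological type of $A=\omega_f(x)\cap K$, it does not produce a $g$-periodic decomposition of period $\geq 2$: ruling out that $A$ sits on a single $g$-invariant subgraph with no finer cycle is exactly where the unboundedness of the periods must enter, and your sketch never actually uses it there. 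Second, even granted a $g$-cycle $\{L,gL,\dots,g^{p-1}L\}$ covering $A$, the candidate $X_{n+1}=\bigcup_{i<k_np}f^i(L)$ can fail to be a cycle because $f^r(g^jL)$ and $f^r(g^{j'}L)$ may overlap for $0<r<k_n$ and $j\neq j'$. Your proposed remedy (small diameters via Lemma \ref{l2} plus separating neighbourhoods) does not engage with this: smallness of disjoint pieces inside $K$ does not prevent their images under a non-injective $f^r$ from intersecting, and at the first step of the induction there is no ``deep renormalization'' to appeal to. The standard repair is to work instead with the closed sets $\omega_{f^{k_np}}(f^{s+i}(x))$, $0\leq i<k_np$, prove that they are pairwise disjoint, and engulf them in pairwise disjoint subgraphs mapped into one another; none of this is in your sketch. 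As it stands the proposal establishes the bookkeeping surrounding the lemma but not the lemma itself.
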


\begin{prop} \label{C1}
	If $\omega_f(x)$ is a solenoid, then $($\ref{(1.1)}$)$ holds.
\end{prop}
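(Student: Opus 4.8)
The plan is to reduce the behaviour of the orbit of $x$ to that of a finite cyclic permutation, and then to invoke the eventual--periodicity estimate \eqref{l11}. First I would record the data supplied by Lemma \ref{dich}: a nested sequence of cycles of graphs $X_{n+1}\subset X_n$ with periods $k_n\to\infty$, $k_n\mid k_{n+1}$, each component of $X_n$ splitting into $k_{n+1}/k_n\ge 2$ components of $X_{n+1}$, and $\omega_f(x)\subset\bigcap_n X_n$ with no periodic point. For a fixed level $n$ I label the components of $X_n$ as $A_0,\dots,A_{k-1}$ (with $k=k_n$) so that $f(A_j)\subset A_{j+1}$ (indices mod $k$), and set $P_j=\omega_f(x)\cap A_j$. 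Since $f(\omega_f(x))=\omega_f(x)$, the pieces $P_j$ are compact, pairwise disjoint and cyclically permuted, $f(P_j)\subset P_{j+1}$.

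The crucial quantitative input, and the step I expect to be the main obstacle, is that $\max_j \mathrm{diam}(P_j)\to 0$ as $n\to\infty$. This is where the solenoidal structure is genuinely used: the nested cyclic partitions $\{P_0,\dots,P_{k_n-1}\}$ of $\omega_f(x)$ refine indefinitely ($k_n\to\infty$, each piece splitting into at least two), so that $\omega_f(x)$ carries the structure of an adding machine. The delicate point is to show that these partitions \emph{separate points} of $\omega_f(x)$; this is exactly where the hypotheses of Lemma \ref{dich}, in particular the absence of periodic points together with the indefinite refinement, must be combined (or one cites the structure theory of $\omega$-limit sets of graph maps). Granting separation, a soft compactness argument on the nested closed partitions forces the maximal piece diameter to tend to $0$: if some level-$n$ piece had diameter $\ge\eta$ for infinitely many $n$, a limiting pair of distinct points would lie in a common piece at every level, contradicting separation. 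Everything after this is routine.

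Granting the shrinking, I fix $\varepsilon>0$ and choose, by uniform continuity of $\varphi$ on $G$, some $\eta>0$ with $|\varphi(u)-\varphi(v)|<\varepsilon$ whenever $d(u,v)<\eta$. I then pick a level $n$ with $\mathrm{diam}(P_j)<\eta/3$ for all $j$, and $\rho>0$ so small that the neighbourhoods $U_j=B(P_j,\rho)$ are pairwise disjoint, have diameter $<\eta$, and satisfy $f(\overline{U_j})\subset U_{j+1}$ for every $j$ (possible since $f(P_j)\subset P_{j+1}$, $f$ is uniformly continuous, and there are finitely many $j$). Because $d(f^m(x),\omega_f(x))\to 0$, there is $M$ with $f^m(x)\in\bigcup_j U_j$ for all $m\ge M$; write $f^m(x)\in U_{j(m)}$, which is well defined by disjointness. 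The inclusion $f(\overline{U_j})\subset U_{j+1}$ then gives $j(m+1)\equiv j(m)+1\pmod k$, hence $j(m)\equiv j(M)+(m-M)\pmod k$ for $m\ge M$, so the itinerary $j(\cdot)$ is periodic of period $k$.

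Finally I would compare $\varphi(f^m(x))$ with the eventually periodic sequence $z_m:=\varphi(q_{j(m)})$ for $m\ge M$ (any fixed $q_j\in P_j$) and $z_m:=0$ otherwise. Since $z$ has period $k$ from $M$ on, \eqref{l11} yields $\frac1N\sum_{m\le N}\bmu(m)z_m=o(1)$. On the other hand $|\varphi(f^m(x))-z_m|<\varepsilon$ for $m\ge M$ (both points lie in $U_{j(m)}$, on which $\varphi$ oscillates by less than $\varepsilon$), and $|\bmu|\le 1$, so
\[
\Big|S_N(x,\varphi)-\tfrac1N\sum_{m=1}^N\bmu(m)z_m\Big|\le \frac{C\,M}{N}+\varepsilon ,
\]
where $C=2\sup_G|\varphi|$ bounds the initial block. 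Letting $N\to\infty$ gives $\limsup_N|S_N(x,\varphi)|\le\varepsilon$, and since $\varepsilon>0$ is arbitrary, $S_N(x,\varphi)=o(1)$, which is \eqref{(1.1)}. Note that Lemma \ref{l13} is not needed here: it is the periodicity/Dirichlet estimate \eqref{l11} that does the work, the solenoid contributing only the eventual periodicity of the itinerary.
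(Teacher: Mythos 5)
Your reduction to an eventually periodic itinerary is the right general shape (and is essentially what the paper does), but the step you yourself flag as the crux --- that $\max_j\mathrm{diam}\,(P_j^{(n)})\to 0$, i.e.\ that the nested cyclic partitions \emph{separate points} of $\omega_f(x)$ --- is false in general, and it cannot be rescued by ``absence of periodic points plus indefinite refinement.'' What Lemma \ref{dich} gives is only a monotone \emph{semi}conjugacy from $f|_{\omega_f(x)}$ onto the adding machine $\varprojlim \mathbb{Z}/k_n\mathbb{Z}$; the fibres need not be singletons. Concretely, in Denjoy-type examples (e.g.\ a blow-up of the Feigenbaum attractor along a dense orbit, which still has zero entropy), $\omega_f(x)$ contains both endpoints $a_c,b_c$ of a wandering gap interval $J_c$; since each level-$n$ component of $X_n$ is connected and the $k_n$ components are cyclically permuted, the component containing $a_c$ must contain all of $J_c$, so $a_c$ and $b_c$ share the same address at every level and $\mathrm{diam}(P_{j}^{(n)})\geq |J_c|$ for all $n$. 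Equivalently: if your separation claim held, $f|_{\omega_f(x)}$ would be conjugate to an adding machine, hence minimal and equicontinuous; but solenoidal $\omega$-limit sets of zero-entropy interval maps need not be minimal and can even carry Li--Yorke pairs (Sm\'{\i}tal's examples). So the comparison $|\varphi(f^m(x))-z_m|<\varepsilon$ breaks down: $\varphi$ need not be nearly constant on the pieces at any level.

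The paper circumvents exactly this obstruction by approximating $\varphi$ not through the orbit but through the test function: it replaces $\varphi$ by a combination of indicator-type functions $\psi_{U}$ of \emph{free arcs} $U$. For such a function, $\psi_U\circ f^n(x)$ is constant along the arithmetic progression $\{n\equiv s+i \ (\mathrm{mod}\ k_r)\}$ unless the connected piece $f^i(K)$ meets the two-point boundary of $U$, which --- the pieces being pairwise disjoint and connected --- can happen for at most $2$ of the $k_r$ residue classes; those exceptional classes are absorbed by the density bound of Lemma \ref{l13}, giving an error $O(1/k_r)$, while the remaining classes are handled by \eqref{l11} exactly as in your argument. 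So Lemma \ref{l13} is not dispensable here: it is precisely the device that replaces your (unavailable) shrinking-diameter claim. To repair your proof you would either have to import this boundary-counting argument, or restrict to the case where the semiconjugacy to the adding machine is actually a conjugacy, which is not guaranteed by the hypotheses.
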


\begin{proof} Fix $x\in X$. Let $\varphi:~G \to \mathbb{R}$ be a continuous function. For any $\varepsilon >0$, there is a function $\phi:~G \to
	\mathbb{R}$ such that $||\varphi-\phi||_{\infty}:= \sup_{x\in G} |\varphi(x) - \phi (x)|<\varepsilon$, where $\phi$ is of 
	the form
	$\phi = \sum_{i=1}^r \alpha_i \psi_{U_i}$, with $\alpha_i \in \mathbb{R}$, $U_i$ is an open free arc in $G$ and 
	$\psi_{U_i}$ is defined as
	follows: $$
	\psi_{U_i} (x) = \left\{
	\begin{array}{lll}
	1 & \mbox{ if }  x\in U_i \\
	\frac{1}{ord(x)} &  \mbox{ if} ~~ x\in \overline{U_i} \backslash U_i\\
	0 & \mbox{ if }  x\in G \backslash \overline{U_i}.
	\end{array}
	\right.
	$$
	By Lemma \ref{dich}, there is a cycle of graphs $X_r$ with period $k_r>0$ such that $\omega_f(x) \subset X_r$. Write
	$X_r=\cup_{i=0}^{k_r -1} f^i (K)$, where $K$ is a subgraph of $G$. There is $s\in \mathbb{N}$ such that $f^s (x) \in K$. 
	Then for any $0\leq i<k_r$ and $n\geq s$, $f^n (x)\in f^i (K)$ if and only if $n\equiv ~ s+i ~ \textrm{mod}(k_r)$. Hence, for any $N>s$,
	
	\begin{align*}
	S_N (x,\psi_{U_j})& =\frac{1}{N}\sum_{n=1}^N \bmu(n) \psi_{U_j}(f^n (x))\\
	& = \frac{1}{N} \sum_{n=1}^{s-1} \bmu(n) \psi_{U_j}(f^n (x))+\frac{1}{N}\sum_{i=0}^{k_r -1}\sum_{s\leq n\leq N, 
		f^n (x)\in f^i (K)}\bmu(n)\psi_{U_j}(f^n (x)).
	\end{align*}
	We distinguish two cases.
	\begin{enumerate}
		
		\item If $f^i (K)\subset U_j$, then by (\ref{l11})
		\begin{align*}
		\frac{1}{N}\sum_{s\leq n\leq N, f^n (x)\in f^i (K)}\bmu(n)\psi_{U_j}(f^n (x))& = \frac{1}{N}\sum_{s\leq n\leq N, 
			n\equiv s+i~ \textrm{mod}(k_r)} \bmu(n)\\
		&=o(1).
		\end{align*}
		\item If $f^i (K)\nsubseteq U_j$ and $f^i (K) \cap U_j \neq \emptyset$, then for any $n,m \geq s$, $n \neq m$, 
		if $f^n (x), f^m(x) \in f^i (K)$, then $|n-m| \geq k_r$. Then by Lemma \ref{l13}, 
		$$\limsup_{N \to +\infty}\frac{1}{N}\Big |\sum_{s\leq n\leq N, f^n(x)\in f^i (K)}\bmu(n)
		\psi_{U_j}(f^n (x))\Big|\leq \frac{1}{k_r}.$$
	\end{enumerate}
	
	The case 2 above can be occurred at most $2$ times and therefore $$\limsup_{N \to +\infty} |S_N(x,\psi_{U_j})| \leq \frac{2}{k_r} .$$
	As $k_r$ is arbitrarily large and  $S_N(x,\phi)=\sum_{i=1}^r \alpha_i S_N (x, \psi_{U_j})$, so $S_N(x,\phi)=o(1)$. 
	Since $||\varphi -\phi ||_{\infty}$ can be taken arbitrarily small, so $S_N(x,\varphi)=o(1)$ and $(\ref{(1.1)})$ holds.
\end{proof}
\bigskip

\textbf{Case 2}: $\omega_f(x)$ is not a solenoid.
\bigskip

Let $X$ be a finite union of subgraphs of $G$
such that $f(X) \subset X$. We define
$$E(X,f)=\Big \{y \in X: \forall \text{ neighborhood } U \text{ of } y \text{ in  } X, \ 
\overline{\mathrm{Orb}_f (U)}=X \Big\}.$$
We call $E(X, f)$ a \emph{basic} set if it is infinite and if $X$ contains a periodic point (cf. \cite{RS}).

\begin{lem}[\cite{RS}, Lemma 12]\label{Min}
	Let $f:~G \to G$ be a graph map and let $\omega_f(x)$ be not a solenoid. There exists a cycle of graphs
	$X \in \mathcal{C}(x)$ such that $ \forall $ $Y \in C(x), \ X \subset Y$. The period of $X$ is maximal among the
	periods of all cycles in $\mathcal{C}(x)$.
\end{lem}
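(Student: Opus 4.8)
The plan is to reduce everything to a single construction, which I will call the \emph{intersection principle}: given two cycles of graphs $X_1, X_2 \in \mathcal{C}(x)$ of periods $k_1$ and $k_2$, there is a cycle of graphs $Z \in \mathcal{C}(x)$ with $Z \subseteq X_1 \cap X_2$ whose period is a common multiple of $k_1$ and $k_2$. Granting this, the lemma follows quickly. Since $\omega_f(x)$ is not a solenoid, the periods occurring in $\mathcal{C}(x)$ are bounded, so there is a maximal one, say $k$, attained by some $X_0 \in \mathcal{C}(x)$. Feeding $X_0$ and an arbitrary $Y \in \mathcal{C}(x)$ into the intersection principle produces $Z \subseteq X_0 \cap Y$ whose period is a multiple of $k$ and at most $k$; hence that period equals $k$ and the period of $Y$ divides $k$. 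I may therefore restrict attention to the subfamily $\mathcal{C}_k(x)$ of cycles of exact period $k$, which by the same argument admits a common sub-cycle of period $k$ for any two of its members, i.e. is downward directed under inclusion. (I work under the standing assumption of this case that $\omega_f(x)$ is infinite; when $\omega_f(x)$ is finite, $(\ref{(1.1)})$ is already immediate from $(\ref{l11})$ and the lemma is not needed.)

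To prove the intersection principle I follow the orbit of $x$. After finitely many steps the orbit enters and remains inside $X_1$, and likewise inside $X_2$, so $f^n(x) \in X_1 \cap X_2$ for all large $n$; let $C_n$ be the connected component of $X_1 \cap X_2$ containing $f^n(x)$. Since $f$ maps $X_1 \cap X_2$ onto itself, permuting its components, we have $f(C_n) = C_{n+1}$, so the $C_n$ form a single $f$-orbit of components, say $C_0, \dots, C_{m-1}$ with $f^m(C_0) = C_0$. The $X_1$-component containing $C_n$ depends only on $n \bmod k_1$ and runs through all $k_1$ of them, and similarly for $X_2$; hence $C_n = C_{n+m}$ forces $k_1 \mid m$ and $k_2 \mid m$, so $m$ is a common multiple of $k_1$ and $k_2$. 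Each $C_n$ is a subcontinuum of the graph $G$, hence a subgraph, and it contains the infinitely many distinct points $f^n(x), f^{n+m}(x), \dots$ (the orbit is not eventually periodic, as $\omega_f(x)$ is infinite), so each $C_n$ is non-degenerate. Thus $Z := \bigcup_{i=0}^{m-1} C_i$ is a cycle of graphs of period $m$ contained in $X_1 \cap X_2$; being a finite union of subcontinua it is closed, and it contains a tail of the orbit, so $\omega_f(x) \subseteq Z$ and $Z \in \mathcal{C}(x)$.

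It remains to produce the minimum of $\mathcal{C}(x)$, for which I apply Zorn's lemma to $(\mathcal{C}_k(x), \subseteq)$. Given a chain, fix one member $W$ of it and pass to the cofinal sub-chain $(Y_\alpha)$ of members contained in $W$; since each $Y_\alpha$ has exactly $k$ components and $W$ also has $k$ components, each $Y_\alpha$ has exactly one component in each component (``slot'') of $W$. Fixing $p \in \omega_f(x)$ and letting $P_{Y_\alpha}$ be the component of $Y_\alpha$ through $p$, the family $(P_{Y_\alpha})$ is a nested family of continua, so $K^* := \bigcap_\alpha P_{Y_\alpha}$ is a continuum, and it is non-degenerate because $Y_\infty := \bigcap_\alpha Y_\alpha \supseteq \omega_f(x)$ is infinite. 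As $f^i$ commutes with such nested intersections of compacta, $Y_\infty$ meets the $i$-th slot in $\bigcap_\alpha f^i(P_{Y_\alpha}) = f^i(K^*)$, whence $Y_\infty = \bigcup_{i=0}^{k-1} f^i(K^*)$ is again a cycle of graphs of period $k$, lying in $\mathcal{C}_k(x)$ and bounding the chain from below. Zorn's lemma then yields a minimal element $X \in \mathcal{C}_k(x)$. Finally, for any $Y \in \mathcal{C}(x)$ the intersection principle gives $Z \in \mathcal{C}_k(x)$ with $Z \subseteq X \cap Y \subseteq X$; minimality forces $Z = X$, so $X = Z \subseteq Y$. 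Thus $X$ is contained in every member of $\mathcal{C}(x)$ and has the maximal period $k$, as required.

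I expect the intersection principle to be the main obstacle. The delicate points there are verifying that the components $C_n$ are genuine non-degenerate subgraphs, which rests on the infinitude of $\omega_f(x)$ (ruling out an eventually periodic orbit), and that the period $m$ is a true common multiple of $k_1$ and $k_2$ rather than a proper divisor. The Zorn step is comparatively routine once one knows that a nested intersection of cycles of graphs of a fixed period is again such a cycle; the only care needed is the slot-by-slot bookkeeping that lets one commute $f^i$ past the intersection and conclude non-degeneracy of $K^*$ from that of $\omega_f(x)$.
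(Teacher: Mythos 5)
First, a remark on the comparison: the paper offers no proof of this statement at all---it is quoted verbatim from Ruette--Snoha \cite{RS} (their Lemma 12)---so your argument can only be measured against the cited source. Your overall architecture (an ``intersection principle'' producing a common sub-cycle whose period is a common multiple, boundedness of periods from the non-solenoid hypothesis, downward directedness of the cycles of maximal period, and a nested-intersection/minimality step) is the standard and correct route, and the Zorn step, the slot-by-slot bookkeeping, and the deduction of $k_1\mid m$, $k_2\mid m$ from the cyclic visiting of the components of $X_1$ and $X_2$ are all fine.

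There is, however, a genuine gap in the proof of the intersection principle, at the sentence ``Since $f$ maps $X_1\cap X_2$ onto itself, permuting its components, we have $f(C_n)=C_{n+1}$, so \dots\ $f^m(C_0)=C_0$.'' Forward invariance gives only $f(X_1\cap X_2)\subseteq f(X_1)\cap f(X_2)=X_1\cap X_2$; surjectivity of $f$ on the intersection does not follow from surjectivity on $X_1$ and on $X_2$ separately (a point of $X_1\cap X_2$ may have preimages in $X_1$ and in $X_2$ but none in $X_1\cap X_2$). Consequently you only get $f(C_n)\subseteq C_{n+1}$ and $f^m(C_0)\subseteq C_0$, and under the paper's definition of a periodic subgraph (which requires $f^k(K)=K$ exactly) the union $\bigcup_{i=0}^{m-1}C_i$ need not be a cycle of graphs. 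The repair is to replace $C_0$ by $K:=\bigcap_{j\geq 0}f^{jm}(C_0)$, a nested intersection of subcontinua with $f^m(K)=K$, and then to re-verify the two properties you currently read off from $C_0$ itself: that $K$ is non-degenerate and that $\mathrm{Orb}(K)$ still contains $\omega_f(x)$. Both follow from the fact that $f$ maps $\omega_f(x)\cap C_i$ \emph{onto} $\omega_f(x)\cap C_{i+1}$ (so $\omega_f(x)\cap C_0$ is infinite and contained in every $f^{jm}(C_0)$), but this is exactly the surjectivity statement that must be argued at the level of $\omega_f(x)$ rather than of $X_1\cap X_2$, and it is missing. A secondary, smaller gap: the assertion that the orbit of $x$ eventually enters $X_1$ (and $X_2$) is not automatic for a closed invariant set and needs the observation that a subgraph of $G$ has finite boundary, so an orbit that never enters $X_1$ would have its (infinite) $\omega$-limit set contained in that finite boundary.
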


\begin{lem} [\cite{RS}, Lemma 13] \label{inf}
	Let $f :~ G \to G$ be a graph map and let $\omega_f(x)$ be not a solenoid. Let $K$
	be the minimal cycle of graphs in $\mathcal{C}(x)$. Then 
	
	\begin{enumerate}
		\item  For every $y \in \omega_f(x)$ and for every relative neighborhood $U$ of $y$ in $K$,
		$\overline{\mathrm{Orb}_f(U)} = K$.
		\item $ \omega_f(x) \subset E(K, f)$. In particular, $E(K, f)$ is infinite.
	\end{enumerate}
\end{lem}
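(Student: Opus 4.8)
The plan is to obtain (2) as an immediate consequence of (1): once (1) is known, every $y\in\omega_f(x)$ satisfies the defining property of $E(K,f)$, so $\omega_f(x)\subseteq E(K,f)$, and since we are in the case where $\omega_f(x)$ is infinite, $E(K,f)$ is infinite too. Thus everything reduces to (1), and I would first reduce (1) to a single component of the cycle. Write $K=\bigcup_{i=0}^{k-1}K_i$ with $K_i=f^i(K_0)$ and $f(K_i)=K_{i+1}$ (indices mod $k$); the $K_i$ are pairwise disjoint subgraphs, hence clopen in $K$, and $g\egdef f^k|_{K_0}\colon K_0\to K_0$ is a graph map. Given $y\in\omega_f(x)$, say $y\in K_0$ (relabel the cycle if necessary), and a relative neighbourhood $U$ of $y$ in $K$, put $U_0\egdef U\cap K_0$, a relative neighbourhood of $y$ in $K_0$. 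Sorting the iterates of $U_0$ by their residue mod $k$ gives $\mathrm{Orb}_f(U_0)\cap K_i=f^i(\mathrm{Orb}_g(U_0))$, and since each $K_i$ is clopen and $f^i$ is continuous on the compact set $\overline{\mathrm{Orb}_g(U_0)}$, taking closures yields $\overline{\mathrm{Orb}_f(U_0)}\cap K_i=f^i(\overline{\mathrm{Orb}_g(U_0)})$. Hence it suffices to prove $\overline{\mathrm{Orb}_g(U_0)}=K_0$, for then $\overline{\mathrm{Orb}_f(U)}\supseteq\overline{\mathrm{Orb}_f(U_0)}=\bigcup_{i=0}^{k-1} f^i(K_0)=K$.

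Set $Z\egdef\overline{\mathrm{Orb}_g(U_0)}\subseteq K_0$; it is closed, forward invariant ($g(Z)\subseteq Z$), and has nonempty relative interior since $U_0$ contains a relatively open set around $y$. I would next show $W_0\egdef\omega_f(x)\cap K_0\subseteq Z$. As in the solenoid case there is $s$ with $f^s(x)\in K$, so after replacing $x$ by $f^s(x)$ (which does not change $\omega_f(x)$) the whole forward orbit lies in $K$; then $f^{n_i}(x)\to y$ with $f^{n_i}(x)\in K$ forces $f^{n_i}(x)\in U_0$ for large $i$, whence $f^m(x)\in\mathrm{Orb}_f(U_0)$ for all $m\ge n_i$ and therefore $\omega_f(x)\subseteq\overline{\mathrm{Orb}_f(U_0)}$; intersecting with the clopen set $K_0$ gives $W_0\subseteq Z$. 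Note that $W_0$ is infinite, being one of the $k$ slices $\omega_f(x)\cap K_i$ that $f$ maps onto one another cyclically and whose union is the infinite set $\omega_f(x)$.

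The heart of the matter is to upgrade $W_0\subseteq Z\subseteq K_0$ to $Z=K_0$ using the minimality of $K$. First I translate Lemma \ref{Min}: if $L\subsetneq K_0$ were a proper subgraph with $g(L)=L$ and $W_0\subseteq L$, then $\bigcup_{i=0}^{k-1}f^i(L)$ would be a cycle of graphs of $f$ (its pieces $f^i(L)$ lie in distinct components $K_i$, hence are pairwise disjoint, and $f^k(L)=g(L)=L$) lying in $\mathcal{C}(x)$ and properly contained in $K$, contradicting $K\subseteq Y$ for all $Y\in\mathcal{C}(x)$; likewise a cycle of graphs of $g$ of period $\ge 2$ containing $W_0$ would yield a cycle of graphs of $f$ of period $>k$, contradicting the maximality of the period of $K$. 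So I would assume $Z\subsetneq K_0$ and manufacture from $Z$ either a proper $g$-invariant subgraph or a higher-period cycle of graphs of $g$, in each case containing $W_0$, reaching one of these contradictions; hence $Z=K_0$ and (1) follows.

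The main obstacle is precisely this last extraction. The difficulty is that $Z=\overline{\mathrm{Orb}_g(U_0)}$ is only closed and forward invariant and need not be a finite union of subgraphs, so Lemma \ref{Min} cannot be applied to it directly; moreover the relative interior of $Z$ may have infinitely many components, so its closure is not automatically a subgraph. I would address this by passing to a suitable subgraph hull of the relative interior of $Z$ and iterating $g$ to reach strong invariance, using Lemma \ref{l2} to guarantee that only finitely many nondegenerate components survive, so that the resulting set is genuinely a (proper, or higher-period) cycle of graphs containing $W_0$. This is exactly the structural content underpinning Proposition \ref{p33} and Lemma \ref{Min} in \cite{RS}, and reproducing it in full is the technical core of the argument.
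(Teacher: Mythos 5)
There is a genuine gap, and you flag it yourself. First, for context: the paper offers no proof of Lemma \ref{inf} at all --- it is imported verbatim as Lemma 13 of \cite{RS} --- so any complete argument here must actually reconstruct the Ruette--Snoha/Blokh machinery, not merely invoke it. Your reduction steps are sound: (2) does follow from (1) together with the (implicit) standing assumption that $\omega_f(x)$ is infinite; the decomposition of $K$ into clopen pieces $K_i$, the identity $\overline{\mathrm{Orb}_f(U_0)}\cap K_i=f^i\bigl(\overline{\mathrm{Orb}_g(U_0)}\bigr)$, and the inclusion $\omega_f(x)\subseteq\overline{\mathrm{Orb}_f(U_0)}$ are all correct. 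But the entire content of the lemma is the step you defer: showing that $Z=\overline{\mathrm{Orb}_g(U_0)}$, which is merely closed, forward invariant, with nonempty interior and containing $W_0$, cannot be a proper subset of $K_0$. Declaring that one would ``manufacture from $Z$ either a proper $g$-invariant subgraph or a higher-period cycle of graphs'' and then conceding that ``reproducing it in full is the technical core of the argument'' is an admission that the proof is not given. In particular nothing in your text rules out that $Z$ is a proper subgraph which is forward but not strongly invariant, or that the hull you take fails to stay inside $K_0$ or fails to remain proper after being saturated to strong invariance; these are exactly the points where the structure theory of $\omega$-limit sets of graph maps (Lemmas 9--12 of \cite{RS}) is doing the work.

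Two further points. The tool you name for controlling components, Lemma \ref{l2}, is stated for dendrites, not graphs, so it cannot be cited as is (an analogous statement for graphs is true but would itself need proof). And a workable route to close the gap is different from the one you sketch: since $y\in\omega_f(x)$, the orbit of $x$ returns to a connected neighbourhood $U\subseteq K$ of $y$, so some $f^{p}(U)$ meets $U$; then $\bigcup_{j\ge 0}f^{jp}(U)$ is connected, its closure is a subgraph $W$ with $f^{p}(W)\subseteq W$, and $\overline{\mathrm{Orb}_f(U)}=\bigcup_{i=0}^{p-1}f^{i}(W)$ is a finite union of subgraphs containing $\omega_f(x)$; after passing to $\bigcap_{n}f^{np}(W)$ to get strong invariance and merging non-disjoint pieces, one obtains a cycle of graphs in $\mathcal{C}(x)$ contained in $\overline{\mathrm{Orb}_f(U)}\subseteq K$, and Lemma \ref{Min} forces it to be all of $K$. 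Until an argument of this kind is written out, the proposal is a correct reduction plus a statement of the problem, not a proof.
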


\begin{lem}[\cite{RS}, Corollary 20]\label{basic}
	If a graph map $f:~G \to G$ admits a basic $\omega$-limit set, then $h(f) > 0$.
\end{lem}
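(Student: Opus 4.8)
My plan is to show that a basic $\omega$-limit set forces a \emph{horseshoe} for some iterate of $f$. Recall that if there are subarcs $J_1, J_2$ of $G$ with disjoint interiors and an integer $m \geq 1$ such that $g^m(J_1) \supseteq J_1 \cup J_2$ and $g^m(J_2) \supseteq J_1 \cup J_2$ for some iterate $g = f^{\ell}$, then the induced coding produces a subsystem factoring onto the full shift on two symbols, so $h(g^m) \geq \log 2$ and therefore $h(f) = \tfrac{1}{\ell m}\, h(g^m) > 0$. The whole task is thus to manufacture such a folding from the two hypotheses defining a basic set: that $E(X,f)$ is infinite and that $X$ contains a periodic point.

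First I would reduce to a strongly transitive map with a fixed point on a single nondegenerate subgraph. Since $X$ is a cycle of graphs, pick a component $G_0$ whose intersection with the infinite set $E(X,f)$ is infinite; as the orbit of the periodic point meets every component, $G_0$ also contains a point of that orbit. Replacing $f$ by the appropriate iterate $g = f^{\ell}$ and restricting to $G_0$, the defining property of $E(X,f)$ localizes: for every $y \in E(X,f) \cap G_0$ and every relative neighborhood $U$ of $y$ in $G_0$ one has $\overline{\bigcup_{n \geq 0} g^n(U)} = G_0$, because the part of $\mathrm{Orb}_f(U)$ lying in $G_0$ is exactly $\bigcup_{n} g^n(U)$. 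After one further passage to a power and a component (the standard cyclic decomposition of a transitive map) we may assume $g$ is strongly transitive on $G_0$ and fixes a point $p \in G_0$. Since $h(g) = \ell\, h(f)$ and $h(g|_{G_0}) \leq h(g)$, it suffices to prove $h(g|_{G_0}) > 0$.

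Next I would extract the folding. Fix a small arc $I \subseteq G_0$ with $p$ as one endpoint and consider the increasing connected sets $U_n = \bigcup_{j=0}^{n} g^j(I)$; these are connected because $g(p) = p$ forces $p \in g^j(I)$ for all $j$, and $\overline{\bigcup_n U_n} = G_0$ by strong transitivity. Since $G_0$ is a graph of finite topological type while the $U_n$ increase to a dense subset containing $p$, there is a least index $j$ at which $g^{j}(I)$ re-enters $I$ past $p$, covering a subarc already swept out by $U_{j-1}$. Tracking this first return along a free subarc of $I$ and using the intermediate-value property there, one produces an integer $m$ and two subarcs $I_1, I_2 \subseteq I$ with disjoint interiors such that $g^m(I_1) \supseteq I$ and $g^m(I_2) \supseteq I$ — a horseshoe for $g^m$. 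By the first paragraph, $h(g|_{G_0}) > 0$, hence $h(f) > 0$.

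The main obstacle is precisely this last extraction. On an interval one reads the folding directly off the graph of the map, but on $G_0$ the image $g^{j}(I)$ may traverse several edges and wind through branch points, so the one-dimensional intermediate-value argument is not globally available. The delicate point is to certify that the returning image overlaps $I$ \emph{with a reversal}, yielding two subarcs that each cover $I$, rather than merely sliding $I$ along itself; here the fixed endpoint $p$ is indispensable, since it pins one end of every $g^j(I)$ while strong transitivity stretches the other end back across $p$, forcing the required turn. Making this rigorous rests on choosing $I$, and restricting to the finitely many iterates actually used, so as to avoid the finite branch set $B(G_0)$ and its relevant preimages, so that $I$ genuinely behaves like a one-dimensional arc throughout the construction.
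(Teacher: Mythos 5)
You should first note that the paper offers no proof of this lemma at all: it is quoted verbatim from Ruette--Snoha (Corollary 20 of \cite{RS}), which in turn rests on Blokh's spectral decomposition for graph maps. So your attempt is measured against that literature rather than against anything in the text. Your high-level strategy --- basic set $\Rightarrow$ horseshoe for some iterate $\Rightarrow$ positive entropy --- is indeed the strategy used there, and the entropy bookkeeping $h(f)=\frac{1}{\ell m}h(g^{m})\geq \frac{\log 2}{\ell m}>0$ is correct. But the proposal has two genuine gaps, and you have effectively conceded the second one yourself.

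First, the reduction to a strongly transitive map on a component is unjustified. The defining property of $E(X,f)$ is that $\overline{\mathrm{Orb}_f(U)}=X$ for neighborhoods $U$ of points \emph{of $E(X,f)$}; since $E(X,f)$ may well be nowhere dense in $X$ (compare Proposition 3.2 of the paper: infinite $\omega$-limit sets can be closed nowhere dense), this says nothing about neighborhoods of arbitrary points of $X$, so $f|_{X}$ need not be transitive and the ``standard cyclic decomposition of a transitive map'' is not available. In particular your sets $U_n=\bigcup_{j\le n}g^{j}(I)$ need not be dense unless $I$ is anchored at a point of $E(X,f)$, whereas you anchor it at the periodic point $p$, which need not lie in $E(X,f)$. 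Second, and more seriously, the extraction of \emph{two} disjoint subarcs $I_1,I_2$ with $g^{m}(I_i)\supseteq I$ is exactly the content of the lemma and is not carried out: a first return of $g^{j}(I)$ over $I$ yields only a single covering unless one proves the image folds, and a degree-one wrap around a circle component (or a monotone slide along an edge) returns without folding. Ruling this out is where the \emph{infiniteness} of $E(X,f)$ must enter --- your sketch never uses it, only nonemptiness plus the periodic point, and those two hypotheses alone are consistent with zero entropy in the absence of the full basic-set structure. As written, the argument establishes the easy implication (horseshoe $\Rightarrow$ positive entropy) and leaves the hard one (basic set $\Rightarrow$ horseshoe), which is a substantial theorem of Blokh, as an acknowledged ``obstacle.'' The honest course here is either to cite \cite{RS} as the paper does, or to reproduce Blokh's construction in full.
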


\begin{prop}[\cite{Ma3}, Theorem 5.7] \label{p32} Let $f: G\to G$ be a graph map without periodic points. 
	Then ($G, f$) is a null system.
\end{prop}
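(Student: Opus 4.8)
The plan is to reduce nullness of $(G,f)$ to the dynamics carried by the $\omega$-limit sets and then to classify that dynamics using the spectral decomposition of graph maps recalled above. I first observe that a graph map of positive topological entropy necessarily possesses periodic points, so the hypothesis that $f$ has no periodic point already forces $h(f)=0$; in particular $G$ is not a tree, since every continuous self-map of a tree has a fixed point. Because topological sequence entropy is governed by the recurrent part of the system, it suffices to show that the restriction of $f$ to each $\omega$-limit set is null and that these pieces cannot be combined into a system of positive sequence entropy. In the language of independence this amounts to ruling out a non-diagonal IT-pair (an independence pair along an infinite time set); any such pair is recurrent and, as I will argue, must localize inside one of the $\omega$-limit sets classified below.

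Next I would classify the $\omega$-limit sets under the standing hypothesis that $f$ has no periodic point, using Proposition \ref{p33} together with the solenoid/non-solenoid dichotomy. A finite $\omega$-limit set is a periodic orbit, which is excluded. If $\omega_f(x)$ is a solenoid, Lemma \ref{dich} produces nested cycles of graphs with periods $k_n\to\infty$, $k_n\mid k_{n+1}$, and $\omega_f(x)$ lies in their intersection with no periodic point inside; the induced dynamics is then conjugate to an adding machine (odometer) on the profinite group $\varprojlim \mathbb{Z}/k_n\mathbb{Z}$. If $\omega_f(x)$ is infinite but not a solenoid, Lemma \ref{Min} and Lemma \ref{inf} furnish a minimal cycle of graphs $K$ with $\omega_f(x)\subset E(K,f)$ infinite; since $K$ carries no periodic point, no component of $K$ can be a tree (trees have the fixed point property), and the transitive behaviour recorded by $E(K,f)$ forces each component to be a circle on which the relevant power of $f$ acts as an orientation-preserving homeomorphism with irrational rotation number — either a minimal rotation or a Denjoy homeomorphism with a Cantor minimal set. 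Lemma \ref{basic} is precisely what excludes any remaining (basic) possibility, since a basic $\omega$-limit set would force $h(f)>0$.

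Finally I would check that every system produced by this classification is null: odometers and irrational circle rotations are equicontinuous, so any $(A,n,\varepsilon)$-separated set for such a system is a $\delta$-separated set of bounded cardinality for a uniform $\delta=\delta(\varepsilon)$, giving zero sequence entropy along every sequence $A$; the Denjoy minimal sets are almost automorphic extensions of irrational rotations with purely discrete spectrum, which are likewise null. Combining these with the reduction of the first paragraph yields that $(G,f)$ has zero topological sequence entropy for every sequence, that is, $(G,f)$ is a null system. The main obstacle I expect is the reduction itself: justifying rigorously that the sequence entropy of the global map is controlled by the recurrent pieces — concretely, that a non-diagonal IT-pair must localize inside a single $\omega$-limit set — requires the localization theory of sequence-entropy (independence) pairs rather than a naive restriction argument, and the transitive-implies-circle step in the non-solenoid case relies on the finer structure theory of transitive graph maps.
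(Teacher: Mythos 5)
This proposition is not proved in the paper at all: it is imported verbatim from Mai--Shao (\cite{Ma3}, Theorem~5.7), so there is no internal argument to compare yours with. Judged on its own merits, your outline has the right flavour (no periodic points forces every $\omega$-limit set to be infinite, and the recurrent dynamics should be rotation-like), but it contains a genuine gap at exactly the point you flag, and that gap is the whole content of the theorem. The reduction ``$(G,f)$ is null provided its restriction to each $\omega$-limit set is null'' is not a known general principle: unlike ordinary topological entropy, sequence entropy has no analogue of Bowen's theorem concentrating it on the non-wandering set. One can show that a non-diagonal IN-pair $(x,y)$ must lie in $\Omega(f)\times\Omega(f)$, but the independence sets witnessing it are realized by orbits of points that need not lie in $\Omega(f)$, so non-nullness of $(G,f)$ does not transfer to non-nullness of the restriction to the recurrent part; the transitive dendrite example of Byszewski et al.\ discussed in Subsection~4.3, whose only invariant measure is a point mass, shows how much dynamics can live off the recurrent pieces. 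Without an actual localization theorem for IN-pairs adapted to this setting, the first paragraph of your argument is an announcement rather than a proof.

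The classification step also over-claims. In the non-solenoid case, what Lemmas~\ref{Min}, \ref{inf} and \ref{basic} give you is that the minimal cycle of graphs $K$ carries an infinite set $E(K,f)$ and no periodic point; they do \emph{not} make each component of $K$ a circle on which a power of $f$ acts as an orientation-preserving homeomorphism. The correct structure (which is precisely what Mai--Shao establish, following Blokh) is that $f$ restricted to such a component is \emph{monotonically semi-conjugate} to an irrational rotation, with connected but possibly non-degenerate fibres; the component may be a graph strictly containing a circle and $f$ need not be injective on it. Likewise, in the solenoid case the restriction of $f$ to $\omega_f(x)$ is in general only an almost one-to-one extension of the odometer $\varprojlim \mathbb{Z}/k_n\mathbb{Z}$, not conjugate to it. Proving nullness therefore requires controlling the fibres of these semi-conjugacies for \emph{all} points of $G$, not merely identifying the minimal sets as odometers, rotations or Denjoy systems --- and that fibre analysis is the substance of the cited Theorem~5.7. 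A small additional point: Lemma~\ref{basic} excludes nothing here, since in the absence of periodic points $E(K,f)$ fails the definition of a basic set automatically, irrespective of entropy.
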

It is turns out that the notion of null system is related to the so-called tame system. This later notion was coined by E. Glasner in \cite{Glasner}. The dynamical system $(X,T)$ is tame if the closure of $\big\{T^n / n \in \mathbb{Z}\big\}$ in $X^X$  is Rosenthal compact \footnote{$X^X$ is equipped with the pointwise convergence. This closure is called the enveloping semigroup of Ellis.}. We recall that the set $K$ is Rosenthal compact if and only if there is a Polish space $P$ such that $K \subset {\textrm {Baire-1}}(P)$ where  ${\textrm {Baire-1}}(P)$ is the first class of Baire functions, that is, pointwise limit of continuous functions on $P$. By Bourgain-Fremelin-Talagrand's theorem \cite{BFT}, $K$ is  Rosenthal compact if and only if K is a subset of the Borel functions on $P$ with $K=\overline{\{f_n\}},$ $f_n \in C(P)$.\\
The precise connection between null systems and tame systems is stated in the following proposition:  
\begin{prop}[\cite{G}, \cite{Ke}, \cite{HLSY}] \label{p33} Let ($X, f$) be a dynamical system. If it is a null system then it is tame.
\end{prop}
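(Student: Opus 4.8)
The plan is to reduce the implication to the combinatorial theory of independence, in which both nullity and tameness admit clean characterizations in terms of independence pairs and the required inclusion becomes essentially tautological. The deduction itself is short; the content sits in two structural theorems which I would quote from \cite{G}, \cite{Ke}, \cite{HLSY} rather than reprove.

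First I would recall the notion of an independence set. For finitely many subsets $A_1,\dots,A_k$ of $X$, call $J\subseteq\mathbb{N}$ an \emph{independence set} for the tuple $(A_1,\dots,A_k)$ if for every finite $F\subseteq J$ and every map $\sigma:F\to\{1,\dots,k\}$ one has $\bigcap_{n\in F}f^{-n}(A_{\sigma(n)})\neq\emptyset$. A pair $(x_1,x_2)\in X\times X$ is then an \emph{IN-pair} if for all neighborhoods $U_1\ni x_1$ and $U_2\ni x_2$ the pair $(U_1,U_2)$ admits independence sets of arbitrarily large finite cardinality, and it is an \emph{IT-pair} if $(U_1,U_2)$ admits an \emph{infinite} independence set for every such choice of neighborhoods. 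Writing $\Delta=\{(x,x):x\in X\}$ for the diagonal, the two facts I would invoke are: the system $(X,f)$ is null if and only if it has no IN-pair outside $\Delta$ (this is the Kerr--Li local description of sequence entropy); and $(X,f)$ is tame if and only if it has no IT-pair outside $\Delta$ (the combinatorial characterization of tameness via the Bourgain--Fremlin--Talagrand dichotomy on the enveloping semigroup).

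With these two characterizations granted, the argument closes immediately. Since an infinite independence set contains finite independence sets of every cardinality, every IT-pair is automatically an IN-pair; that is, the set of nondiagonal IT-pairs is contained in the set of nondiagonal IN-pairs. Hence if $(X,f)$ is null it has no nondiagonal IN-pair, so a fortiori no nondiagonal IT-pair, and is therefore tame.

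The substance lies entirely in the two characterization theorems, not in the final step. I expect the tameness side to be the genuine obstacle: proving that the failure of tameness—nonmetrizability of the enveloping semigroup, equivalently the appearance of a copy of $\beta\mathbb{N}$ and the loss of the Baire-class-$1$/Rosenthal property—is detected \emph{exactly} by a nondiagonal IT-pair requires the local variational machinery linking combinatorial independence to the structure of the enveloping semigroup. Once both characterizations are available, the inclusion of IT-pairs into IN-pairs yields the proposition with no further estimates, which is why it is natural to cite the equivalences rather than to build them here.
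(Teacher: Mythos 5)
Your argument is correct and is precisely the standard deduction underlying the references the paper cites for this proposition: the paper itself supplies no proof, only the citations to \cite{G}, \cite{Ke}, \cite{HLSY}, and your route through the Kerr--Li independence characterizations (null $\Leftrightarrow$ no nondiagonal IN-pair, tame $\Leftrightarrow$ no nondiagonal IT-pair, and IT-pairs $\subseteq$ IN-pairs since an infinite independence set contains finite ones of every cardinality) is exactly how those sources establish the implication. Nothing is missing beyond the two quoted characterization theorems, which it is appropriate to cite rather than reprove.
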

It is well known that if $(X,T)$ is tame then the pointwise limit of $T$ along any subsequence is Borel, when it exists. 
Combining this with Kushnireko's characterization of the measurable discrete spectrum 
\cite{Kush}\footnote{A transformation measure-preserving has a measurable discrete spectrum if and only if the orbit of 
any square integrable function is compact in $L^2(\mu)$, $\mu$ is an invariant measure.}, it can be seen that tame system 
has a measurable discrete spectrum for any invariant measure. This was observed by W. Huang in \cite{Huang}. 
From this, we see the following: 

\begin{prop}[\cite{HWY1}, Theorem 1.8] \label{p34} Let ($X, f$) be a tame system. Then  $(1.1)$ holds.
\end{prop}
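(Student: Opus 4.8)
The plan is to deduce Sarnak's conjecture from the structural fact established just above: tameness of $(X,f)$ forces \emph{every} $f$-invariant Borel probability measure $\mu$ to make $(X,\mathcal{B},\mu,f)$ a system with measurable discrete spectrum. The engine I would use is the property of \emph{asymptotic orthogonality of powers}: I would show that discrete spectrum implies that, for distinct large primes $p\neq q$, every joining of $(X,f^{p})$ with $(X,f^{q})$ is asymptotically a product on mean-zero observables, and then feed this into a K\'atai--Bourgain--Sarnak--Ziegler argument to reach \eqref{(1.1)} at every point.

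Concretely, I would fix $x\in X$ and a continuous $\varphi:X\to\Real$ and set $a_{n}=\varphi(f^{n}(x))$, so $|a_{n}|\leq\|\varphi\|_{\infty}$. By the K\'atai--Bourgain--Sarnak--Ziegler criterion it suffices to control, for distinct primes $p,q$, the correlations $\frac1N\sum_{n\leq N}a_{pn}\overline{a_{qn}}$. Writing $a_{pn}\overline{a_{qn}}=(\varphi\otimes\overline{\varphi})\big((f^{p}\times f^{q})^{n}(x,x)\big)$, this average is controlled by the weak-$*$ limit points of the empirical measures of $(x,x)$ under $f^{p}\times f^{q}$; each such limit $\kappa$ is $(f^{p}\times f^{q})$-invariant, its marginals are $f$-invariant and hence, by hypothesis, have discrete spectrum, so $\kappa$ is a joining of two discrete-spectrum systems.

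The spectral core is the decorrelation. Modeling the marginals by rotations $y\mapsto y+\alpha$ on compact abelian groups (Halmos--von Neumann) and expanding $\varphi$ in characters, the only Fourier modes $(\chi,\chi')$ surviving the $n$-average are those with $\chi(\alpha)^{p}=\chi'(\alpha)^{q}$; because the eigenvalue group is countable, for any fixed finite set of modes these coincidences hold only for a set of prime pairs $(p,q)$ of zero density. On mean-zero observables this pushes the joinings towards the product measure as $p,q\to\infty$, which is exactly the decorrelation the criterion requires, while the constant part of $\varphi$ is absorbed by the Prime Number Theorem, $\frac1N\sum_{n\leq N}\bmu(n)=o(1)$.

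The step I expect to be the real obstacle, and the one that genuinely needs tameness rather than a single invariant measure, is the passage from an almost-everywhere conclusion to the pointwise statement \eqref{(1.1)} valid at \emph{every} $x$. This demands that the correlation estimate be uniform over all subsequential joining limits, over the base point $x$, and across the different invariant measures (whose $\varphi$-means need not agree, so the mean cannot be subtracted once and for all); the Rosenthal-compactness of the enveloping semigroup is precisely what guarantees that all the relevant pointwise limits along subsequences are Borel and that the discrete-spectrum structure is shared uniformly by every invariant measure. One must also check the zero-density count of eigenvalue coincidences uniformly over the countable eigenvalue groups carried by the various marginals. Once these uniformities are in place, summing the Fourier expansion and letting the prime parameters grow yields $S_{N}(x,\varphi)=o(1)$ for every $x$, which is \eqref{(1.1)}.
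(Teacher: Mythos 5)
Your opening reduction --- tameness forces every $f$-invariant Borel probability measure to have measurable discrete spectrum --- is exactly the reduction the paper itself makes (via Kushnirenko's criterion, as observed by Huang). But beyond that point the paper does not argue at all: Proposition~\ref{p34} is quoted as a black box from \cite{HWY1}, Theorem 1.8, whose proof goes through \emph{measure complexity} and the Matom\"aki--Radziwi{\l}{\l}--Tao theorem on multiplicative functions in short intervals, not through the K\'atai--Bourgain--Sarnak--Ziegler criterion. Your attempt to reprove that theorem by KBSZ runs into two genuine obstructions, and they are precisely the reasons the discrete-spectrum case resisted KBSZ-type arguments before \cite{HWZ} and \cite{HWY1}.

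First, the zero-density claim for eigenvalue coincidences is false for rational eigenvalues, and tame systems include all odometers and rational rotations. If $\lambda=\chi(\alpha)$ is a root of unity of order $d$, then $\lambda^{p}=\lambda^{q}$ holds whenever $p\equiv q \pmod d$, i.e.\ for a positive proportion of pairs of primes; for $\lambda=-1$ and $a_{n}=(-1)^{n}$ one gets $\frac1N\sum_{n\leq N}a_{pn}\overline{a_{qn}}=1$ for \emph{all} odd $p,q$, so the KBSZ hypothesis simply fails and orthogonality must instead come from the prime number theorem in arithmetic progressions. Your remark that the constant mode is ``absorbed by the Prime Number Theorem'' covers only the trivial character, not the whole rational part of the spectrum. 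Second, in a non-uniquely ergodic system the subsequential limits of $\frac1N\sum_{n\leq N}a_{pn}\overline{a_{qn}}$ are integrals $\int\varphi\otimes\overline{\varphi}\,d\kappa$ over joinings $\kappa$ whose marginals $\mu_{1},\mu_{2}$ may be different invariant measures; even when $\kappa$ is the product this equals $\int\varphi\,d\mu_{1}\cdot\overline{\int\varphi\,d\mu_{2}}$, which need not vanish and cannot be removed by subtracting a single mean, whereas KBSZ needs the correlations themselves to be $o(1)$. You flag both difficulties but defer them to ``Rosenthal compactness gives the uniformity''; Rosenthal compactness yields Borel measurability of subsequential limits (which is what feeds Kushnirenko's criterion in the reduction), but it is not a mechanism for either of these analytic problems. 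As written, your outline reduces the proposition to a statement essentially as hard as \cite{HWY1}, Theorem 1.8 itself.
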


\begin{prop}\label{p310}
	Let $f: G\to G$ be a graph map without periodic points. Then $(1.1)$ holds.
\end{prop}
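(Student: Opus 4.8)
The plan is to read off the conclusion as a short implication chain built from the three structural propositions that immediately precede the statement, rather than by estimating the averages directly. The hypothesis here---that $f: G \to G$ is a graph map with no periodic points---is verbatim the hypothesis of Proposition \ref{p32}. So the first step is simply to invoke that result to conclude that $(G, f)$ is a null system. This is the crucial input, since nullity is exactly the property that the no-periodic-points assumption buys us.

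With nullity established, the second step is to apply the null-implies-tame transfer recorded in Proposition \ref{p33}, upgrading $(G, f)$ to a tame dynamical system. The only verification needed at this point is that $(G, f)$ qualifies as a dynamical system in the required sense, which is immediate: a graph is a compact connected metric space and $f$ is continuous, so the hypotheses of Proposition \ref{p33} are met. The third and final step is to feed tameness into Proposition \ref{p34}, whose conclusion is precisely $(1.1)$. Chaining Propositions \ref{p32}, \ref{p33}, and \ref{p34} in this order yields the claim with no further work; in particular, unlike the solenoid case treated in Proposition \ref{C1}, no explicit bound on $S_N(x, \varphi)$ is required, since all of the analytic content has been absorbed into the tame-implies-disjoint statement.

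In this sense there is no genuine obstacle internal to the present proof: the proposition is a repackaging of deeper facts, and the real difficulty lives entirely in the referenced ingredients---the passage from nullity to tameness, the measurable discrete spectrum of tame systems (via the Kushnirenko/Huang observation invoked in the discussion before Proposition \ref{p34}), and the Möbius disjointness of systems with discrete spectrum. The single point demanding care is to confirm that ``no periodic points'' is exactly the hypothesis Proposition \ref{p32} demands, so that the first link of the chain is legitimately forged; once that is checked, the remaining two implications are automatic.
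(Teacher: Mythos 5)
Your proposal is correct and is exactly the paper's proof: the authors also chain Proposition \ref{p32} (no periodic points implies null), Proposition \ref{p33} (null implies tame), and Proposition \ref{p34} (tame implies $(1.1)$) in that order with no further argument. Nothing is missing.
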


\begin{proof}By Proposition \ref{p32}, ($G, f$) is a null system and by Proposition \ref{p33},
	($G, f$) is tame. It follows from Proposition \ref{p34} that $(1.1)$ holds.  
\end{proof}
\medskip

\noindent\emph{\textbf{Proof of Theorem \ref{t31}.}}
Let $f:~G \to G$ be a graph map with $h(f)=0$ and let $x\in G$. If $\omega_f(x)$ is finite then $x$ is asymptotic to 
some periodic point. Then by (\ref{l11}) and Lemma \ref{l12}, $($\ref{(1.1)}$)$ holds. 
Now, suppose that $\omega_f(x)$ is infinite. If $\omega_f(x)$ is a solenoid then by Proposition \ref{C1}, 
$($\ref{(1.1)}$)$ holds. Suppose that $\omega_f(x)$ is not a solenoid. Let $X = \cup_{i=0}^{k-1} f^i(K)$ be the minimal 
cycle of $G$ containing $\omega_f(x)$. By Lemma \ref{inf} $($2$)$, $E(X,f)$ is infinite. Then by Lemma \ref{basic}, 
$f$ does not admit a basic set, that is $X \cap P(f) = \emptyset$. For any $0\leq i <k$, set $K_i = f^i(K)$ and $g = f^k$. 
Then $g_i:= g_{|K_i}:~K_i \to K_i$ is a graph map without periodic point. By Proposition \ref{p310}, $(K_i,g_i)$ is a null system and 
therefore so is $(X,f_{|X})$. 
Hence by Proposition \ref{p310}, $(1.1)$ holds for $(X,f_{|X})$. Let $s\geq 0$ such that $f^s (x) \in X$. 
Since $f(X)=X$, there is $y\in X$ such that $f^s (y) =f^s(x)$. In particular, $(x,y)$ is asymptotic. 
Since $S_N(y,\varphi)=o(1)$, so by Lemma \ref{l12}, $S_N(x,\varphi)=o(1)$. This finishes the proof of Theorem \ref{t31}. 
$\blacksquare$
\medskip

\section{\bf The case of local dendrite map}

\subsection{\bf On monotone local dendrite map} 

The aim of this subsection is to prove the following theorem:

\begin{thm} \label{t41} Let $f: ~X \to X$ be a monotone local dendrite map. Then \ $(1.1)$ holds.
\end{thm}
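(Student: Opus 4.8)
The plan is to verify \eqref{(1.1)} at an arbitrary point by first collapsing the question onto a minimal set, and then exploiting the rigidity that monotonicity imposes on a local dendrite. Fix $x\in X$ and a continuous $\varphi:X\to\mathbb R$. By Lemma~\ref{AU} there is an almost periodic point $y\in\omega_f(x)$ such that $(x,y)$ is proximal. The decisive use of monotonicity is the claim that a monotone local dendrite map admits no Li--Yorke pair, so that every proximal pair is in fact asymptotic; granting this, $(x,y)$ is asymptotic and Lemma~\ref{l12} reduces the problem to proving \eqref{(1.1)} for the almost periodic point $y$, i.e. on the minimal set $M:=\overline{\mathrm{Orb}_f(y)}$.

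Next I would classify $M$. If $M$ is finite it is a periodic orbit, the sequence $n\mapsto\varphi(f^n(y))$ is eventually periodic, and \eqref{l11} gives $S_N(y,\varphi)=o(1)$. If $M$ is infinite I would split according to the local dendrite structure, using that $X$ contains only finitely many circles. When $M$ lies in the dendrite (tree) part of $X$, monotonicity together with Lemma~\ref{l2} forces the forward iterates to contract, so $(M,f|_M)$ is an equicontinuous minimal system (of adding--machine type); an equicontinuous system has zero sequence entropy, hence is null, hence tame by Proposition~\ref{p33}, and Proposition~\ref{p34} yields \eqref{(1.1)} on $M$. When $M$ is carried by one of the finitely many circles, $f$ acts there, after passing to a suitable power, as an orientation-preserving circle homeomorphism, and \eqref{(1.1)} at $y$ follows from Karagulyan's theorem \cite{Ka} (splitting the Ces\`aro average into the relevant arithmetic progressions modulo that power). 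In every case \eqref{(1.1)} holds along the orbit of $y$, and transferring back through the asymptotic pair $(x,y)$ by Lemma~\ref{l12} finishes the argument.

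The step I expect to be the main obstacle is the rigidity claim underlying the reduction: that a monotone local dendrite map has no Li--Yorke pairs, so that the proximal pair furnished by Lemma~\ref{AU} is automatically asymptotic. This is exactly where monotonicity must be coupled with the geometry of $X$: since $f$ cannot fold the space, the unique arc $[f^n(x),f^n(y)]$ joining two proximal iterates behaves coherently, and Lemmas~\ref{l2} and~\ref{diam} turn a single near-coincidence into genuine contraction of $\mathrm{diam}([f^n(x),f^n(y)])$ to $0$. A secondary, more bookkeeping, difficulty is the classification of infinite minimal sets into the equicontinuous dendrite type and the circle type; here the finiteness of the number of circles in a local dendrite is essential, as it is what prevents rotation-type behaviour from accumulating and destroying equicontinuity on the tree part.
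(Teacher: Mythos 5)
Your proposal follows essentially the same route as the paper: reduce to an almost periodic point $y\in\omega_f(x)$ via Lemma~\ref{AU}, use the absence of Li--Yorke pairs to upgrade proximal to asymptotic, transfer by Lemma~\ref{l12}, and then run the trichotomy finite / Cantor (adding machine) / circle for the minimal set. Two remarks. First, the step you single out as the main obstacle --- that a monotone local dendrite map has no Li--Yorke pair --- is exactly Lemma~\ref{l42}, quoted in the paper from \cite{am}, so it is a citable input rather than something to be established here; likewise the classification of infinite minimal sets as Cantor sets or circles is Lemma~\ref{l43} from \cite{Ah}, and the adding-machine structure on Cantor minimal sets in the dendrite part is Lemma~\ref{Cantor} from \cite{Nagh1}, rather than a consequence of the contraction heuristic you sketch. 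Second, your case analysis is thinner than the paper's at one point: an infinite Cantor minimal set may sit \emph{inside} one of the circles of $X$ (a Denjoy-type situation), and to treat $f$ there as a circle map you need that circle to be invariant. The paper secures this by first passing to $X_\infty=\bigcap_n f^n(X)$, on which $f$ is monotone and onto (Lemma~\ref{l48}), so that the subgraph $G_X$ generated by the circles is strongly invariant with monotone restriction (Proposition~\ref{gra}); the circle and Cantor-on-circle cases are then absorbed into the graph-map theorem (Theorem~\ref{t31}) rather than Karagulyan's homeomorphism result. This is a fillable bookkeeping gap, not a wrong turn, but it is the one piece of structure your sketch omits.
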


\begin{cor} \label{c41} If $f: ~X \to X$ is a homeomorphism on a local dendrite $X$, then \ $(1.1)$ holds. 
\end{cor}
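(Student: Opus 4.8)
The plan is to obtain this corollary directly from Theorem~\ref{t41}, the only work being to recognize that a homeomorphism of a local dendrite falls under its hypotheses; all the substantive content is carried by the theorem itself. First I would observe that a homeomorphism $f:~X\to X$ of a local dendrite $X$ is in particular a continuous self-map of a local dendrite, hence a local dendrite map in the sense of Section~2.

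Next I would verify that $f$ is monotone. By definition this requires $f^{-1}(C)$ to be connected for every connected subset $C\subset X$. Since $f$ is a homeomorphism, its inverse $f^{-1}$ is continuous, and the preimage $f^{-1}(C)$ coincides with the image of $C$ under the continuous map $f^{-1}$. As the continuous image of a connected set is connected, $f^{-1}(C)$ is connected, and therefore $f$ is monotone. This is exactly the remark recorded immediately after the definition of a monotone map in Section~2 (``in particular, if $f$ is a homeomorphism then it is monotone'').

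Having thus exhibited $f$ as a monotone local dendrite map, I would simply invoke Theorem~\ref{t41} to conclude that $(1.1)$ holds. There is no genuine obstacle at the level of the corollary: the elementary verification that a homeomorphism is monotone rests only on the continuity of $f^{-1}$, whereas the entire dynamical and number-theoretic difficulty is absorbed into the proof of Theorem~\ref{t41}.
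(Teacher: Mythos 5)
Your proposal is correct and is exactly the route the paper intends: the corollary is stated without separate proof precisely because a homeomorphism is monotone (as remarked in Section~2, since $f^{-1}(C)$ is the continuous image of $C$ under $f^{-1}$), so Theorem~\ref{t41} applies directly.
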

\medskip

We recall the following results.

\begin{lem}[\cite{am}, Theorem 4.1]\label{l42} Let $f: ~X \to X$ be a monotone local dendrite map. Then $f$ has no Li-Yorke pair. 
	In particular, $f$ has zero topological entropy.
\end{lem}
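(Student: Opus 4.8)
The plan is to prove the core assertion — that $f$ admits no Li--Yorke pair — and then to obtain zero entropy as an immediate consequence of the Blanchard--Glasner--Kolyada--Maass theorem, by which positive topological entropy forces the existence of a Li--Yorke pair (indeed of an uncountable scrambled set). Thus the whole weight of the argument rests on showing that for a monotone local dendrite map every proximal pair is automatically asymptotic, i.e. that $\liminf_n d(f^n(x),f^n(y))=0$ already implies $\lim_n d(f^n(x),f^n(y))=0$.

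First I would isolate the basic structural tool in the dendrite setting: a monotone map $f$ of a dendrite sends arcs to arcs, $f([a,b])=[f(a),f(b)]$ for all $a,b$. Indeed, $[f(a),f(b)]$ is a subcontinuum, so $f^{-1}([f(a),f(b)])$ is connected by monotonicity and contains $a,b$, hence contains $[a,b]$, giving $f([a,b])\subseteq[f(a),f(b)]$; conversely $f([a,b])$ is a subcontinuum containing $f(a),f(b)$, so it contains the unique arc $[f(a),f(b)]$. Fixing a fixed point $p$ of $f$ (every dendrite map has one), this identity shows that the tree order $u\preceq v \iff u\in[p,v]$ is preserved by $f$, so $f$ acts as an order-preserving interval map on the dendrite viewed as a tree rooted at $p$. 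The idea is to exploit this to control the iterated arcs $J_n:=f^n([x,y])=[f^n(x),f^n(y)]$.

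The heart of the matter is then a dichotomy for a proximal pair $(x,y)$. Proximality together with Lemma \ref{diam} gives $\liminf_n\mathrm{diam}(J_n)=0$, so it remains to upgrade this to $\lim_n\mathrm{diam}(J_n)=0$, which is exactly asymptoticity. I would argue by contradiction: if $\limsup_n\mathrm{diam}(J_n)=\delta>0$, then, using the order structure above, I would extract an infinite subfamily of the ``large'' arcs $J_n$ that are pairwise disjoint — monotonicity should prevent an arc that has shrunk from later re-expanding across a region it has already vacated, forcing the large occurrences to sit on essentially disjoint pieces of the dendrite. Lemma \ref{l2} then forces the diameters of a pairwise disjoint sequence of connected sets to tend to $0$, contradicting $\mathrm{diam}(J_n)\ge\delta$ along that subfamily. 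Producing this disjointness is the delicate point, and I expect it to be the main obstacle: this is precisely where the monotone hypothesis must be used in full strength, since for a general dendrite map the arcs $J_n$ can overlap in complicated recurrent ways.

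Finally I would reduce the local dendrite case to the dendrite case. A local dendrite contains only finitely many circles; I would treat their union $\Gamma$ as a finite invariant configuration on which $f$ acts essentially as a monotone circle map, which itself carries no Li--Yorke pair, while off $\Gamma$ the space decomposes into dendritic pieces on which the arc-to-arc identity and the preceding diameter argument apply verbatim. Since proximality eventually localises a pair into such a dendrite neighbourhood, the two regimes combine to show that no proximal non-asymptotic pair exists; hence $f$ has no Li--Yorke pair, and the entropy statement follows as noted.
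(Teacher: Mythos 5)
The paper contains no internal proof of this lemma: it is imported verbatim from \cite{am} (Theorem 4.1), so your attempt can only be measured against the argument in that reference and against its own internal soundness. Your outer skeleton is the right one: absence of Li--Yorke pairs gives zero entropy by the Blanchard--Glasner--Kolyada--Maass theorem, and your derivation of the arc identity $f([a,b])=[f(a),f(b)]$ for monotone dendrite maps, together with preservation of the tree order rooted at a fixed point, is correct. The genuine gap is exactly where you flag it, and it is fatal to the write-up as it stands: from $\limsup_n \mathrm{diam}(J_n)=\delta>0$ you cannot in general ``extract an infinite subfamily of pairwise disjoint large arcs.'' Nothing prevents all the large arcs $J_{n_k}$ from sharing a common point --- for instance each may contain the fixed point $p$, or they may all accumulate on a single branch point --- so Lemma \ref{l2} never engages. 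The known proofs must treat the overlapping case head-on: if $J_n\cap J_{n+k}\neq\emptyset$ for some $k\geq 1$, then $f^{ik}(J_n\cap J_{n+k})\subseteq J_{n+ik}\cap J_{n+(i+1)k}$ for all $i$, so $C=\bigcup_{i\geq 0}J_{n+ik}$ is connected with $f^k(C)\subseteq C$, and one then analyses the monotone map $f^k$ on the subdendrite $\overline{C}$ to force convergence of the diameters. Your sentence ``monotonicity should prevent an arc that has shrunk from later re-expanding'' is a restatement of the theorem to be proved, not an argument.

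The reduction from local dendrites to dendrites is also broken as written. The union $\Gamma$ of the circles need not be $f$-invariant: a monotone map may collapse a circle to a point and carry it into a dendritic part of $X$. This is precisely why the surrounding machinery in the paper first passes to $X_\infty=\bigcap_{n\in\mathbb{N}} f^n(X)$, on which $f$ is monotone \emph{and onto} (Lemma \ref{l48}), and only then obtains $f(G_X)=G_X$ with $f_{|G_X}$ monotone (Proposition \ref{gra}); points outside $X_\infty$ must be handled by producing a partner inside it, not by ``localising'' the pair. Moreover, in a local dendrite arcs between two points are not unique, so neither your arc identity nor Lemma \ref{diam} applies ``verbatim'' off $\Gamma$; orbits may traverse the circles infinitely often, so proximality does not confine a pair to a dendrite neighbourhood; and your claim that a monotone circle map carries no Li--Yorke pair is itself a nontrivial assertion needing proof (true for homeomorphisms, but degree-one monotone non-invertible maps require a separate argument). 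In short, the strategy is correct in outline, but both the core dendrite step and the local-dendrite reduction are missing the ideas that carry the actual proof.
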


\begin{lem}[\cite{Ah}, Theorem 1.2]\label{l43}
	Any $\omega$-limit set of a monotone local dendrite map is a minimal set which is either finite, or a Cantor set, or a circle.
\end{lem}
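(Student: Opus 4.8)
The plan is to prove the two assertions separately: first that $\omega_f(x)$ is minimal, and then that a minimal set of a monotone local dendrite map can only be finite, a Cantor set, or a circle. For minimality, by Lemma \ref{AU} there is an almost periodic point $y \in \omega_f(x)$ such that $(x,y)$ is proximal. By Lemma \ref{l42} the map $f$ has no Li-Yorke pair, so the proximal pair $(x,y)$ is in fact asymptotic, i.e. $\lim_{n\to+\infty} d(f^n(x),f^n(y)) = 0$. An elementary consequence of the definition of the $\omega$-limit set is that asymptotic points share the same $\omega$-limit set, whence $\omega_f(x) = \omega_f(y)$. Since $y$ is almost periodic, $\overline{\mathrm{Orb}_f(y)}$ is a minimal set equal to $\omega_f(y)$, and therefore $\omega_f(x)$ is minimal. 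This part is routine once Lemmas \ref{AU} and \ref{l42} are in hand.

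Write $M = \omega_f(x)$; since $M$ is minimal we have $f(M) = M$. If $M$ is finite it is a single periodic orbit, the first alternative, so assume $M$ is infinite; then $M$ has no isolated point and is perfect. I would next examine the connected components of $M$. Each component is closed, since components of a compact space are closed, hence a subcontinuum of $X$ and so itself a local dendrite; because $f(M)=M$ and $f$ is monotone, $f$ carries components onto components and thus permutes them. If every component is a singleton, then $M$ is a compact, perfect, totally disconnected metric space, hence a Cantor set --- the second alternative.

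The remaining possibility is that some component $C$ of $M$ is nondegenerate, and here I must show $M$ is a circle. First I would show the nondegenerate components form finitely many periodic cycles: on $M$ the map $f|_M$ is a surjection with no nontrivial proximal pairs, so it behaves equicontinuously and roughly preserves diameters; were the orbit $\{f^n(C)\}$ to consist of infinitely many distinct, hence pairwise disjoint, components, Lemma \ref{l2} would force their diameters to tend to $0$, contradicting $\mathrm{diam}(C)>0$. Hence some power $f^k$ fixes $C$ setwise, the union of nondegenerate components is closed and invariant, and by minimality it is all of $M$ and forms a single cycle of components. On each such component $f^k|_C : C \to C$ is a monotone self-map of a nondegenerate local dendrite for which $C$ is minimal. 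Since every dendrite map has a fixed point, $C$ cannot be a dendrite, as a fixed point would contradict minimality of the infinite set $C$; thus the local dendrite $C$ must contain one of the finitely many circles of $X$, and a final equicontinuity argument, using that $C$ is minimal and that monotone maps do not collapse arcs, rules out dendritic appendages and identifies $C$, and hence $M$, with a single circle.

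The genuinely delicate step is this last one, converting \emph{some nondegenerate component} into \emph{$M$ is exactly a circle}, and two points require care. First, Lemma \ref{l2} is stated for dendrites, so to control the diameters of components inside a local dendrite I would either pass to a dendrite subcontinuum complementary to the finitely many circles or establish a local-dendrite analogue of that diameter estimate. Second, ruling out that the minimal component $C$ is a proper superset of a circle, that is a circle carrying dendritic branches, needs the no-Li-Yorke-pair property upgraded to genuine equicontinuity of $f^k|_C$, together with the finiteness of the circle count in a local dendrite; this rigidity is what ultimately forces $C$ to be a bare circle rather than a larger local dendrite.
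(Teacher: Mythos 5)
The paper does not actually prove this lemma: it is imported verbatim from Abdelli (\cite{Ah}, Theorem 1.2), so your proposal must be judged against that external proof rather than against anything in the text. Your first half is sound and is exactly the mechanism the paper itself deploys elsewhere (in the proofs of Lemma \ref{l41}, Lemma \ref{l45} and Theorem \ref{t41}): Lemma \ref{AU} produces an almost periodic $y\in\omega_f(x)$ proximal to $x$, Lemma \ref{l42} upgrades proximal to asymptotic, asymptotic points share $\omega$-limit sets, and hence $\omega_f(x)=\omega_f(y)=\overline{\mathrm{Orb}_f(y)}$ is minimal. The Cantor alternative (infinite minimal set with all components degenerate is compact, perfect and totally disconnected) is also fine, and your worry about Lemma \ref{l2} being stated only for dendrites is a real but minor and fixable point, since the analogous diameter estimate holds for local dendrites.

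The trichotomy half, however, has two genuine gaps. First, your finiteness argument for the orbit of a nondegenerate component misuses Lemma \ref{l2}: for pairwise disjoint sets $f^n(C)$ that lemma yields $\mathrm{diam}(f^n(C))\to 0$ as $n\to+\infty$, which in no way contradicts $\mathrm{diam}(C)>0$ --- that is merely the $n=0$ term of the sequence. To get a contradiction you need a recurrence input (e.g.\ minimality returning $f^{n_i}(C)$ Hausdorff-close to $C$ with diameter bounded below, or a proof that an infinite minimal set of a monotone map carries no asymptotic pairs), and none is supplied. Second, the concluding step --- which is the actual content of the theorem --- is only gestured at, in two respects. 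You pass silently from ``a single cycle of components'' to ``$M$ is a circle'': if the cycle had length $k\geq 2$, then $M$ would be a union of $k$ disjoint circles, which is not among the three alternatives, so $k=1$ must be proved (it follows from branch-point and fixed-point obstructions at the junctions connecting the circles, but this needs an argument). Worse, the upgrade you invoke to rule out dendritic appendages, namely ``no Li--Yorke pairs $\Rightarrow$ equicontinuity of $f^k|_C$,'' is false for general minimal systems: a Sturmian subshift is minimal, has no Li--Yorke pairs (every proximal pair lies in a fiber over the distal rotation factor and is asymptotic), yet, being an infinite expansive system, is not equicontinuous. Filling this gap requires the specific topology rather than pure dynamics, and this is the route the cited proof takes: one works with the invariant graph $G_X$ spanned by the finitely many circles (Proposition \ref{gra}), handles a minimal set meeting $G_X$ by graph and circle arguments, and handles a minimal set avoiding $G_X$ by Naghmouchi's adding-machine theorem (Lemma \ref{Cantor}). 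As it stands, your sketch establishes minimality and the finite/Cantor dichotomy, but not the circle case.
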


\begin{lem}[\cite{Nagh1}, Corollary 3.7] \label{Cantor}
	Let $f: ~X \to X$ be a monotone dendrite map and $L$ be an infinite $\omega$-limit set.
	Then there is a sequence $\alpha$ of prime numbers such that $f_{|L}$ is topologically conjugate 
	to the adding machine $f_{\alpha}$.
\end{lem}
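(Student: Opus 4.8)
The plan is to show that $f_{|L}$ is a minimal, distal (hence equicontinuous) homeomorphism of a Cantor set, that such a system is exactly an adding machine realised as the inverse limit of cyclically permuted clopen partitions, and finally to factor the tower heights into primes to produce the sequence $\alpha$.

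First I would pin down the nature of $L$. By Lemma~\ref{l43} every $\omega$-limit set of a monotone local dendrite map is minimal and is finite, a Cantor set, or a circle; since the dendrite $X$ contains no homeomorphic copy of a circle and $L$ is infinite by hypothesis, $L$ must be a Cantor set and $f_{|L}$ minimal. In particular $L$ is perfect, nowhere dense in $X$, and strongly invariant ($f(L)=L$), so $f_{|L}$ is a homeomorphism. Moreover, by Lemma~\ref{l42} the map $f$ has no Li-Yorke pair, so every proximal pair is asymptotic; in the minimal infinite system $f_{|L}$ this forces the proximal relation to reduce to the diagonal, i.e. $f_{|L}$ is distal. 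This tameness is what will prevent genuinely branching (Bratteli--Vershik) behaviour and allow a single cyclic tower at each scale.

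The core step is to build, for each $n$, a clopen partition $\mathcal{P}_n=\{A^n_0,\dots,A^n_{q_n-1}\}$ of $L$ that is cyclically permuted, $f(A^n_i)=A^n_{i+1\bmod q_n}$, with $\mathcal{P}_{n+1}$ refining $\mathcal{P}_n$, with $q_n\mid q_{n+1}$, and with $\operatorname{diam}(A^n_i)\to 0$ as $n\to\infty$. I would realise the pieces as traces $A^n_i=L\cap D^n_i$ of sub-dendrites: fixing a point $c$ of the convex hull $[L]$ that separates $L$ and using monotonicity of $f$ (preimages of connected sets are connected, and by $f(L)=L$ the images along $L$ respect the component structure), one controls how the finitely many clopen pieces cut out of $L$ by $c$ and finitely many of its iterates are permuted, producing connected sets $D^n_0,\dots,D^n_{q_n-1}$ that are pairwise disjoint, cover $L$, and satisfy the cyclic relation above. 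Refining the separating set along the orbit yields the nested sequence, and since the level-$n$ pieces are pairwise disjoint connected subsets of a dendrite, Lemma~\ref{l2} (together with Lemma~\ref{diam}) forces $\operatorname{diam}(A^n_i)\to 0$; distality from the previous paragraph is what guarantees that one constant-height cyclic tower suffices at each scale, whence $q_n\mid q_{n+1}$.

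Finally, the nested cyclically permuted clopen partitions identify $f_{|L}$ with the inverse limit $\varprojlim(\mathbb{Z}/q_n\mathbb{Z})$ on which $f$ acts as $+1$, that is, an odometer; the conjugacy sends $x\in L$ to the sequence of indices $i$ with $x\in A^n_i$, which is a homeomorphism onto the inverse limit because the partitions separate points (mesh $\to 0$) and generate the topology of $L$. Writing each ratio $q_{n+1}/q_n$ as a product of primes and inserting the corresponding intermediate cyclic partitions, I obtain a single sequence $\alpha$ of primes for which $f_{|L}$ is topologically conjugate to the adding machine $f_\alpha$. The main obstacle is the core step: producing the \emph{single} cyclic towers with nested, divisibility-respecting heights and vanishing mesh, which is precisely where monotonicity, the dendrite geometry of Lemmas~\ref{l2} and~\ref{diam}, and the no-Li-Yorke hypothesis of Lemma~\ref{l42} have to be combined.
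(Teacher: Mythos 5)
The decisive step of your argument is broken: from Lemma~\ref{l42} you only get that every proximal pair of $f$ is asymptotic, and you then claim that in the minimal infinite system $(L,f_{|L})$ this ``forces the proximal relation to reduce to the diagonal, i.e. $f_{|L}$ is distal.'' That inference is false, because a minimal infinite system can have nontrivial \emph{asymptotic} pairs, and asymptotic pairs are proximal without being Li--Yorke. Sturmian subshifts are the standard counterexample: they are minimal and infinite, their proximal pairs are exactly the pairs of codings lying over the same point of the underlying irrational rotation, and those two codings differ in at most finitely many coordinates, hence are asymptotic --- so there are no Li--Yorke pairs at all --- yet the system is neither distal nor equicontinuous nor conjugate to an adding machine. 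Your parenthetical ``distal (hence equicontinuous)'' is a second unjustified jump (distality does not imply equicontinuity in general), and distality/equicontinuity is precisely the load-bearing ingredient in your plan: it is what you invoke to guarantee a \emph{single} cyclic tower at each scale, the divisibility $q_n \mid q_{n+1}$, and ultimately the odometer structure. With that prop removed, nothing in your write-up rules out a Sturmian-like (almost automorphic but not equicontinuous) behaviour of $f_{|L}$; excluding it is exactly what has to be proved.

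Consequently your ``core step'' is a sketch of the desired conclusion rather than a proof of it. What must actually be shown is that an infinite $\omega$-limit set of a \emph{monotone} dendrite map is enclosed in nested cycles of subdendrites whose periods are unbounded (a solenoidal structure), i.e. one must exclude the case where all enclosing cycles have bounded period; that exclusion has to come from the monotonicity and the dendrite geometry (for instance, a bounded-period, basic-set-like scenario produces Li--Yorke pairs or positive entropy, contradicting Lemma~\ref{l42}), not from an abstract distality argument. Your constructions of clopen traces $A^n_i = L \cap D^n_i$, the cyclic permutation of the pieces, the refinement, and the mesh-to-zero argument via Lemmas~\ref{l2} and~\ref{diam} are indeed in the spirit of Naghmouchi's proof of this statement --- note that the paper itself gives no proof and simply imports the lemma as Corollary~3.7 of \cite{Nagh1} --- but the existence of the cyclic decompositions with unbounded, divisibility-respecting periods is the entire content of that corollary, and in your proposal it is asserted (via the invalid distality claim) rather than established.
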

\medskip
At this point, let us point out that the adding machine satisfy $(1.1)$ since it has a topological discrete spectrum, that is, the eigenfunctions span a dense linear subspace of the $C(X)$ (the space of continuous functions equipped with the strong topology). This will allows us to prove the following:  

\begin{lem}\label{l41} Let $f: ~X \to X$ be a monotone dendrite map. Then \ $(1.1)$ holds.
\end{lem}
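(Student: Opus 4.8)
The plan is to reduce the claim to the two exhaustive possibilities for $\omega_f(x)$ provided by Lemma \ref{l43}, namely that every $\omega$-limit set of a monotone dendrite map is a minimal set that is finite, a Cantor set, or a circle. Since a dendrite contains no homeomorphic copy of a circle, the circle case cannot occur here, so I only need to treat the finite case and the infinite (Cantor) case. The overall strategy mirrors the proof of Theorem \ref{t31}: fix $x\in X$ and a continuous $\varphi$, locate a distinguished almost periodic point inside $\omega_f(x)$, verify $(\ref{(1.1)})$ there, and transport the estimate back to $x$ along a proximal or asymptotic relation.

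First I would dispose of the finite case. If $\omega_f(x)$ is finite, then it is a periodic orbit, and $x$ is asymptotic to a point of that orbit (monotone dendrite maps have no Li-Yorke pairs by Lemma \ref{l42}, so a proximal pair is automatically asymptotic). The sequence $\big(\varphi(f^n(y))\big)_n$ is then eventually periodic for the periodic point $y$, so $(\ref{l11})$ gives $S_N(y,\varphi)=o(1)$, and Lemma \ref{l12} upgrades this to $S_N(x,\varphi)=o(1)$. For the infinite case, Lemma \ref{l43} forces $\omega_f(x)$ to be a Cantor minimal set, and Lemma \ref{Cantor} then supplies a sequence $\alpha$ of primes such that $f_{|\omega_f(x)}$ is topologically conjugate to the adding machine $f_\alpha$. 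The adding machine has topological discrete spectrum, hence satisfies $(\ref{(1.1)})$ for every point of $\omega_f(x)$ and every continuous function; in particular it does so for the restriction of $\varphi$.

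The crucial linking step is to move from a point of $\omega_f(x)$ to $x$ itself. By Lemma \ref{AU} there is an almost periodic point $y\in\omega_f(x)$ such that $(x,y)$ is proximal. Because $f$ has no Li-Yorke pairs (Lemma \ref{l42}), proximality of $(x,y)$ forces $(x,y)$ to be asymptotic. Since $y$ is almost periodic, $\overline{\mathrm{Orb}_f(y)}$ is a minimal subset of $\omega_f(x)$, and as $\omega_f(x)$ is itself minimal this closure equals $\omega_f(x)$; thus $y$ lies in the Cantor minimal set on which $f$ is conjugate to the adding machine, whence $S_N(y,\varphi)=o(1)$. Applying Lemma \ref{l12} to the asymptotic pair $(x,y)$ then yields $S_N(x,\varphi)=o(1)$, completing both cases and hence the lemma.

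I expect the main obstacle to be the careful justification that $(\ref{(1.1)})$ holds at the distinguished point $y$ of the Cantor minimal set, i.e. correctly invoking the discrete-spectrum property of the adding machine and checking that it delivers the $o(1)$ estimate pointwise and uniformly over $\varphi$. The no-Li-Yorke-pair reduction of proximal to asymptotic via Lemma \ref{l42}, and the minimality argument identifying $\overline{\mathrm{Orb}_f(y)}$ with $\omega_f(x)$, are the other points where one must be precise, but they are structurally routine once the adding-machine disjointness is in hand.
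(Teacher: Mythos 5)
Your proposal is correct and follows essentially the same route as the paper: split on whether $\omega_f(x)$ is finite or a Cantor set (the circle case being vacuous for dendrites), invoke the adding-machine conjugacy of Lemma \ref{Cantor} together with its topological discrete spectrum to get $(1.1)$ on the $\omega$-limit set, and transfer to $x$ via Lemma \ref{AU}, the absence of Li--Yorke pairs (Lemma \ref{l42}), and Lemma \ref{l12}. The only difference is that you spell out a few steps the paper leaves implicit (why the circle case is excluded, and why proximality upgrades to asymptoticity in the finite case), which is harmless.
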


\begin{proof} Let $x\in X$ and set $L= \omega_f(x)$.
	If $L$ is finite then $x$ is asymptotic to some periodic point. Then by (\ref{l11}) and Lemma \ref{l12},
	$(1.1)$ holds for the point $x$. Suppose that $L$ is infinite, then it is a Cantor set and 
	then $f_{| L}$ act as the adding machine (Lemma \ref{Cantor}). Hence  $(1.1)$ holds for any point of $L$. But,
	by Lemma \ref{AU}, there exists $y\in L$ such that $(x,y)$ is a proximal 
	pair and by Lemma \ref{l42}, $(x,y)$ is asymptotic. As $(1.1)$ holds for the point $y$, 
	it follows that $(1.1)$ holds for $x$ by Lemma \ref{l12}. 
	This finishes the proof of Lemma \ref{l41}.
\end{proof}
\medskip

Let $X$ be a local dendrite. We define the graph $G_{X}$ as the intersection of all graphs in $X$  containing all the circles. Then $G_{X}$ is a subgraph of $X$ (with $G_{X}=\emptyset$, if $X$ contains no circle).

\begin{prop}[\cite{Ah}, Proposition 3.6]\label{gra}
	Let $f: X\to X$ be a monotone onto local dendrite map.
	Then we have the following properties:\\
	(i) $f(G_{X}) = G_{X}$.\\
	(ii) $f_{/ G_{X}}$ is monotone.
\end{prop}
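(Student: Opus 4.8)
The plan is to exploit the intrinsic, purely topological description of $G_{X}$ together with the fact that monotone surjections neither create nor destroy circles. Throughout write $\mathcal{C}=C_{1}\cup\cdots\cup C_{m}$ for the union of the finitely many circles of $X$. First I would record that $G_{X}$ coincides with the convex hull $[\mathcal{C}]$: every graph containing $\mathcal{C}$ is a subcontinuum containing $\mathcal{C}$, hence contains $[\mathcal{C}]$, while $[\mathcal{C}]$ is itself a subgraph containing all circles, so the two agree. The geometric engine of the whole argument is the following structural fact: each connected component of $X\setminus G_{X}$ is an open dendrite whose closure meets $G_{X}$ in a single point. Indeed, if the closure of a component met $G_{X}$ in two distinct points $p,q$, an arc through the component from $p$ to $q$ together with an arc of $G_{X}$ from $p$ to $q$ would form a circle not contained in $G_{X}$, contradicting $\mathcal{C}\subset G_{X}$. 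I call this the \emph{single-attachment} property; its immediate consequence (``convexity'' of $G_{X}$) is that any arc of $X$ joining two points of $G_{X}$ stays inside $G_{X}$, since an injective arc cannot enter and leave a dendrite attached along a single point.

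For part (i) the key step is to show that $f$ permutes the circles, i.e. $f(\mathcal{C})=\mathcal{C}$. A monotone image of a circle is a circle or a point (a monotone surjection has connected fibres, so it collapses a disjoint family of subarcs of $S^{1}$, and the quotient is again a circle unless everything collapses), and a monotone image of an arc is an arc or a point (no loop can appear). Since a monotone surjection of a local dendrite is surjective on $H_{1}(X)$ (connected fibres, Vietoris--Begle), and a surjective endomorphism of the finitely generated group $H_{1}(X)$ is an isomorphism, no circle can be collapsed; thus each $f(C_{i})$ is a circle and therefore equals some $C_{j}$. Conversely, for each circle $C_{j}$ the fibre $f^{-1}(C_{j})$ is a subcontinuum mapping monotonically onto $C_{j}$, so it must contain a circle $C_{i}$, and then $f(C_{i})=C_{j}$; hence $f$ is onto on circles. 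This gives $f(\mathcal{C})=\mathcal{C}$. Now $f(G_{X})$ is a subcontinuum containing $f(\mathcal{C})=\mathcal{C}$, hence it contains $[\mathcal{C}]=G_{X}$; and for the reverse inclusion any point of $G_{X}=[\mathcal{C}]$ lies on an arc joining two points of $\mathcal{C}$, whose $f$-image is an arc joining two points of $\mathcal{C}\subset G_{X}$ and therefore lies in $G_{X}$ by convexity. Thus $f(G_{X})=G_{X}$.

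For part (ii) I would verify monotonicity of $f|_{G_{X}}\colon G_{X}\to G_{X}$ directly from the definition. Let $D\subset G_{X}$ be connected; then $(f|_{G_{X}})^{-1}(D)=f^{-1}(D)\cap G_{X}$, and I must show this is connected. The set $F:=f^{-1}(D)$ is connected because $f$ is monotone, and being a subcontinuum of a local dendrite it is arcwise connected. Given $a_{1},a_{2}\in F\cap G_{X}$, join them by an arc $\gamma\subset F$; by the convexity of $G_{X}$ the arc $\gamma$ stays in $G_{X}$, so $\gamma\subset F\cap G_{X}$. Hence $F\cap G_{X}$ is arcwise connected, and $f|_{G_{X}}$ is monotone.

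The hardest part is the circle-permutation step in (i): for a merely monotone (not invertible) surjection one must rule out that a circle is collapsed to a tree or that the cyclic structure is otherwise altered. The conceptually clean justification is the homological one, but an elementary, self-contained treatment forces one to prove the two auxiliary facts that a monotone image of an arc is an arc and of a circle is a circle, and to analyse the fibres $f^{-1}(C_{j})$. Everything else --- both inclusions in (i) and the monotonicity in (ii) --- then follows mechanically from the single-attachment/convexity property of $G_{X}$, which is the true structural heart of the proof.
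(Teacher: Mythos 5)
The paper does not prove this proposition at all: it is imported wholesale from Abdelli \cite{Ah} (Proposition 3.6), so there is no internal proof to compare yours against. Your skeleton --- identify $G_{X}$ with the smallest subcontinuum containing the union $\mathcal{C}$ of the circles, prove the single-attachment/convexity property of $G_{X}$, show $f$ permutes the circles, then deduce (i) and (ii) --- is the natural route, and the convexity argument itself is sound. But two of your steps do not hold up as written.

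First, you use several times that the $f$-image of an arc $\gamma\subset X$ is an arc (notably in the reverse inclusion of (i)). What is true is that a monotone map \emph{defined on} an arc has an arc or a point as image; the restriction of a monotone $f:X\to X$ to $\gamma$ need not be monotone, because its fibres are $f^{-1}(y)\cap\gamma$ and a local dendrite is not hereditarily unicoherent (two subcontinua may meet in a disconnected set, e.g.\ two arcs of a circle), so $f(\gamma)$ can fail to be an arc and your convexity argument does not apply to it. The inclusion $f(G_{X})\subseteq G_{X}$ should instead come straight from monotonicity: once each $f(C_{i})$ is known to be a circle, $\mathcal{C}\subseteq f^{-1}(G_{X})$, and $f^{-1}(G_{X})$ is a subcontinuum, hence contains $[\mathcal{C}]=G_{X}$, which is exactly the desired inclusion. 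Second, the circle-permutation step is not justified as stated: Vietoris--Begle with merely connected (not acyclic) fibres gives injectivity of $f^{*}$ on $\check{H}^{1}(X)\cong\mathbb{Z}^{k}$, not surjectivity of $f_{*}$ on $H_{1}$, and acyclicity of the fibres is unavailable since a fibre swallowing a circle is precisely what you are trying to exclude. (Injectivity of $f^{*}$, equivalently $\det f_{*}\neq 0$, does suffice to forbid collapsing a homologically nontrivial simple closed curve, so the homological idea is repairable.) Your elementary fallback also has a hole: from ``$f^{-1}(C_{j})$ contains a circle $C_{i}$'' you cannot conclude $f(C_{i})=C_{j}$, since that circle could itself be collapsed to a point of $C_{j}$; you must produce a circle in $f^{-1}(C_{j})$ mapping \emph{onto} $C_{j}$. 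Finally, in (ii), $f^{-1}(D)$ need not be closed, so it is not a subcontinuum; reduce to point fibres (for closed surjections of compacta, connectedness of all point preimages already yields monotonicity) and apply convexity to $f^{-1}(y)\cap G_{X}$.
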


\begin{lem}\label{l45} Let $f: ~X \to X$ be a monotone onto local dendrite map. Then \ $(1.1)$ holds.
\end{lem}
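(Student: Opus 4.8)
The plan is to reduce everything to the two cases already settled: monotone dendrite maps (Lemma \ref{l41}) and zero-entropy graph maps (Theorem \ref{t31}). First I would reduce to almost periodic points. Given $x\in X$, by Lemma \ref{AU} there is an almost periodic $y\in\omega_f(x)$ with $(x,y)$ proximal; since $f$ has no Li--Yorke pair (Lemma \ref{l42}), the pair $(x,y)$ is in fact asymptotic, so by Lemma \ref{l12} it suffices to prove $(\ref{(1.1)})$ at $y$. Thus it is enough to establish $(\ref{(1.1)})$ for every point of a minimal set $M=\overline{\mathrm{Orb}_f(y)}$. By Lemma \ref{l43}, $M$ is finite, a Cantor set, or a circle.

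Next I would split according to the position of $M$ relative to $G_X$. Since $M$ is minimal we have $f(M)=M$, and by Proposition \ref{gra}(i) we have $f(G_X)=G_X$; hence $M\cap G_X$ is closed with $f(M\cap G_X)\subseteq f(M)\cap f(G_X)=M\cap G_X$. By minimality of $M$, either $M\subseteq G_X$ or $M\cap G_X=\emptyset$. If $M$ is finite, $y$ is periodic and $(\ref{(1.1)})$ follows from $(\ref{l11})$. If $M\subseteq G_X$ (which in particular covers the circle case, since every circle of $X$ lies in $G_X$), then, $f$ having zero entropy by Lemma \ref{l42}, the restriction $f_{|G_X}$ is a zero-entropy graph map, and Theorem \ref{t31} yields $(\ref{(1.1)})$ for every point of $G_X$, hence for $y$.

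It remains to treat the case $M\cap G_X=\emptyset$, which is the crux. Here I would collapse the graph part: let $\pi:X\to Y:=X/G_X$ be the quotient map sending $G_X$ to a single point. Since every component of $X\setminus G_X$ has closure a dendrite meeting $G_X$ in exactly one point (all circles of $X$ being contained in $G_X$), the quotient $Y$ is a dendrite and $\pi$ is monotone. As $f(G_X)=G_X$, the map $f$ descends to a continuous $\bar f:Y\to Y$ with $\pi\circ f=\bar f\circ\pi$, and one checks $\bar f$ is monotone: for connected $C\subseteq Y$, $\bar f^{-1}(C)=\pi(f^{-1}(\pi^{-1}(C)))$ is connected, using monotonicity of $\pi$ and of $f$. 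Because $M\cap G_X=\emptyset$, the map $\pi$ is injective on $M$, so $\pi_{|M}$ conjugates $f_{|M}$ to $\bar f_{|\pi(M)}$. Given a continuous $\varphi:X\to\mathbb R$ and $y\in M$, I extend $\varphi\circ(\pi_{|M})^{-1}$ to a continuous $\tilde\psi:Y\to\mathbb R$; then $\varphi(f^n(y))=\tilde\psi(\bar f^n(\pi(y)))$ for all $n$, whence $S_N(y,\varphi)=S_N(\pi(y),\tilde\psi)$. By Lemma \ref{l41} applied to the monotone dendrite map $\bar f$, the right-hand side is $o(1)$, giving $(\ref{(1.1)})$ at $y$.

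The main obstacle is the reduction in the last paragraph: verifying that $Y=X/G_X$ is genuinely a dendrite and that $\bar f$ is monotone. This rests on the structural fact that the components of $X\setminus G_X$ attach to $G_X$ at single points---otherwise collapsing $G_X$ would create a loop, contradicting that $G_X$ contains every circle of $X$---together with the check that monotonicity passes to the quotient. Once these topological points are in place, the dynamical transfer of $(\ref{(1.1)})$ across the conjugacy $\pi_{|M}$ is routine, since orbits of points of $M$ never meet $G_X$, so only the values of $\varphi$ on $M$ (faithfully recorded by $\tilde\psi$) intervene. (When $X$ has no circle, $G_X=\emptyset$ and the statement is just Lemma \ref{l41}.)
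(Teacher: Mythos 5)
Your proposal is correct, and its skeleton (reduce to an almost periodic $y\in\omega_f(x)$ via Lemma \ref{AU}, upgrade proximal to asymptotic via Lemma \ref{l42}, transfer along the asymptotic pair by Lemma \ref{l12}, then case--split on the minimal set using Lemma \ref{l43}) is exactly the paper's. Where you genuinely diverge is in the treatment of the infinite minimal sets. The paper works directly on $L=\omega_f(x)$: when $L$ is a circle it applies Theorem \ref{t31} to $f_{|L}$; when $L$ is a Cantor set it asserts that $G_X$ is a single circle $C$ and, in the subcase $L\cap C=\emptyset$, invokes the adding-machine description of Lemma \ref{Cantor} --- a statement formulated for monotone \emph{dendrite} maps --- leaving the passage from the local dendrite to a dendrite implicit. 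You instead split on whether the minimal set $M$ meets $G_X$ (a clean dichotomy by minimality and $f(G_X)=G_X$): if $M\subseteq G_X$ you apply Theorem \ref{t31} to the zero-entropy graph map $f_{|G_X}$, which absorbs the circle case; if $M\cap G_X=\emptyset$ you collapse $G_X$ to a point, verify that $X/G_X$ is a dendrite carrying a monotone induced map, and quote Lemma \ref{l41}. This buys two things: the reduction from local dendrite to dendrite is made explicit (the attachment-at-a-single-point fact you isolate is precisely what is needed, and your loop argument for it is sound), and you never need the claim that $X$ carries only one circle, so the argument treats a general graph part $G_X$ uniformly. Both proofs ultimately run on the same engine (the adding machine hidden in Lemma \ref{l41} via Lemma \ref{Cantor}, and Theorem \ref{t31} for the graph part), but your reduction is more self-contained. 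The only steps worth writing out in full are the monotonicity of the quotient map $\pi$ (a closed surjection of continua with connected fibres pulls connected sets back to connected sets), from which the monotonicity of $\bar f$ follows as you indicate, and the absence of simple closed curves in $X/G_X$; both are standard and your sketches of them are correct.
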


\begin{proof} 
	Assume that $X$ is not a dendrite and let $x\in X$. Set $L= \omega_f(x)$. By Lemma \ref{AU}, there exists $y\in L$ such that $(x,y)$ is a proximal pair.
	By Lemma \ref{l42}, $(x,y)$ is asymptotic. From Lemma \ref{l43}, we distinguish the following cases.
	
	Case 1: $L$ is finite. In this case, $(1.1)$ holds for the point $x$ similarly as in the proof of Theorem \ref{t41}. 
	
	Case 2: $L$ is a circle: In this case $f_{| L}$ is a circle map, so by Theorem \ref{t31},
	$(1.1)$ holds for the point $y$.
	
	Case 3: $L$ is a Cantor set. In this case, $X$ contains only one circle (i.e. $G_{X} = C$ a circle). 
	If $L$ meets $C$, then $L$ is included in $C$ (by minimality of $L$). Hence by Theorem \ref{t31},
	$(1.1)$ holds for the point $y$. Now if $L$ is disjoint from $C$, then it is 
	included in $X\backslash C$, and then 
	$f_{| L}$ act as the adding machine (Lemma \ref{Cantor}). So $(1.1)$ holds for the point $y$. It follows that $(1.1)$ holds for $x$ by Lemma \ref{l12}. 
	This finishes the proof of Lemma \ref{l45}.
\end{proof}
\medskip

We denote by $\Lambda(f)$ the union of all $\omega$-limit sets of $f$. Define the space $X_{\infty}=\underset{n\in \mathbb{N}}\bigcap f^{n}(X)$. It is a sub-continuum of $X$ and hence $X_{\infty}$ is a sub-local dendrite of $X$. 
Moreover, $X_{\infty}$ is strongly $f$-invariant and we have $\Lambda(f)\subset X_{\infty}.$
\medskip

\begin{lem}[\cite{Ah}, Lemma 4.3]\label{l48} The map $f_{/_{X_{\infty}}}$ is monotone and onto.
\end{lem}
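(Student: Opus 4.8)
The plan is to dispose of surjectivity first and then reduce monotonicity to the connectedness of point--preimages. Since $f(X)\subseteq X$, the sets $f^{n}(X)$ form a decreasing sequence of subcontinua of $X$ (each is a continuous image of the compact connected set $X$), so $X_{\infty}=\bigcap_{n\ge 0}f^{n}(X)$ is a subcontinuum, and the standard finite--intersection argument for continuous maps on compacta gives $f(X_{\infty})=\bigcap_{n}f(f^{n}(X))=X_{\infty}$: indeed, for $y\in\bigcap_{n}f(f^{n}(X))$ the sets $f^{-1}(y)\cap f^{n}(X)$ are nested nonempty compacta and hence have a common point, which lies in $X_{\infty}$ and maps to $y$. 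This is exactly the strong invariance already recorded, and it shows that $f_{/X_\infty}$ is onto.

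For monotonicity the key observation is a purely set--theoretic identity: for every $B\subseteq X$ and every $n\ge 0$,
\[
f^{n}(X)\cap f^{-1}(B)=f^{n}\big((f^{n+1})^{-1}(B)\big),
\]
which one checks directly by writing a point of $f^{n}(X)$ as $f^{n}(w)$. The point is that $f^{n+1}$ is monotone (a composition of monotone maps is monotone, since $(f^{n+1})^{-1}(B)=f^{-1}\big((f^{n})^{-1}(B)\big)$), so when $B=C$ is connected the right--hand side is the continuous image under $f^{n}$ of the connected set $(f^{n+1})^{-1}(C)$, hence connected; and when $C$ is closed it is a subcontinuum. Intersecting over $n$ and using $X_{\infty}=\bigcap_{n}f^{n}(X)$ yields
\[
(f_{/X_\infty})^{-1}(C)=X_{\infty}\cap f^{-1}(C)=\bigcap_{n\ge 0}\big(f^{n}(X)\cap f^{-1}(C)\big),
\]
a decreasing intersection of subcontinua, which is again a subcontinuum and in particular connected. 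Taking $C$ to be a single point shows that all fibres of $f_{/X_\infty}$ are connected.

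To pass from closed connected sets to arbitrary connected $C\subseteq X_{\infty}$ (as required by the definition of monotone), I would use that $f_{/X_\infty}$ is a closed surjection of compacta with connected fibres. The restriction of a closed map to the saturated set $(f_{/X_\infty})^{-1}(C)$ is again closed, hence a quotient map onto $C$; if $(f_{/X_\infty})^{-1}(C)=A\sqcup B$ were a separation, then, each fibre over $C$ being connected, $A$ and $B$ would be saturated and clopen, so their images would separate $C$, a contradiction. Hence $(f_{/X_\infty})^{-1}(C)$ is connected for every connected $C$, and $f_{/X_\infty}$ is monotone.

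The only genuinely delicate point is monotonicity, and within it the crux is the displayed identity together with the fact that a decreasing intersection of subcontinua is a subcontinuum; the local--dendrite structure is not used beyond compactness and the hypothesis that $f$, and hence every $f^{n}$, is monotone. The main obstacle I anticipate is precisely resisting the naive attempt to treat $X_{\infty}\cap f^{-1}(C)$ as an intersection of two connected sets, which may well be disconnected in a local dendrite because of its circles; the identity above circumvents this by exhibiting each approximating set as a continuous image of a connected set rather than as such an intersection.
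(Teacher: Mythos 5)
Your argument is correct, and it is worth noting at the outset that the paper itself offers no proof of this statement: Lemma \ref{l48} is imported verbatim from \cite{Ah} (Lemma 4.3 there), so there is no internal argument to compare against, and what you have produced is a self-contained substitute. Your surjectivity step (the nested nonempty compacta $f^{-1}(y)\cap f^{n}(X)$ have a common point in $X_{\infty}$) is the standard one and is fine. For monotonicity, the identity $f^{n}(X)\cap f^{-1}(B)=f^{n}\big((f^{n+1})^{-1}(B)\big)$ checks out, and it correctly sidesteps the genuine trap you name: in a local dendrite one cannot argue that $X_{\infty}\cap f^{-1}(C)$ is connected merely because it is an intersection of connected sets, since the circles destroy hereditary unicoherence. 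Writing each approximant as the continuous image under $f^{n}$ of the monotone preimage $(f^{n+1})^{-1}(C)$ makes it a subcontinuum whenever $C$ is a nonempty closed connected subset of $X_{\infty}$ (nonemptiness of every term follows from $f(X_{\infty})=X_{\infty}$), and the nested-intersection theorem for continua finishes that step. The final upgrade from connected point-fibres of a closed surjection of compacta to connected preimages of arbitrary connected sets is also standard and correctly executed. A pleasant by-product is that your proof uses nothing about local dendrites beyond compactness and connectedness: it shows that for any monotone self-map of a continuum the restriction to $X_{\infty}=\bigcap_{n}f^{n}(X)$ is monotone and onto, which is at least as general as the statement being cited.
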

\smallskip

\noindent\textit{\textbf{Proof of Theorem \ref{t41}}}. First by Lemma \ref{l42}, $f$ has zero topological entropy. 
	Let $x\in X$ and set $L= \omega_f(x)$. If $x\in X_{\infty}$, then $(1.1)$ holds for $x$ by Lemmas 
	\ref{l48} and \ref{l45}. 
	Assume that $x\notin X_{\infty}$. By Lemma \ref{AU}, there exists $y\in L$ such that $(x,y)$
	is a proximal pair and by Lemma \ref{l42}, $(x,y)$ is asymptotic. As $L\subset \Lambda(f)\subset X_{\infty}$, 
	so by Lemma \ref{l45}, $(1.1)$ holds for the point $y$ and hence for $x$. The proof is complete. \ $\blacksquare$
\medskip

\subsection{\bf On continuous maps on a certain class of dendrites}

The aim of this subsection is to prove the following Theorem:

\begin{thm}\label{t42}
	Let $X$ be a dendrite such that $E(X)$ is closed and its set of accumulation points $E(X)^{\prime}$ is finite. Let $f:~X \to X$ be a continuous map with zero topological entropy.  Then $(1.1)$ holds.
\end{thm}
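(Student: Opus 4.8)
The plan is to imitate the graph case (Theorem~\ref{t31} and Proposition~\ref{C1}), reducing \eqref{(1.1)} to a classification of the infinite $\omega$-limit sets of $f$. I first record the geometry forced by the hypotheses. Since $E(X)$ is closed, every branch point has finite order and $B(X)$ is discrete (\cite{Ar}); the endpoints accumulate only at $F:=E(X)'$ by definition, and, using Lemma~\ref{l2}, so do the branch points. Hence outside any neighbourhood of the finite set $F$ there are only finitely many branch points and endpoints, so $X$ is a graph there, the complementary pieces near $F$ having small diameter. This makes available the same approximation as in Proposition~\ref{C1}: every continuous $\varphi$ is approximated in sup-norm by a finite combination $\phi=\sum_{i=1}^{r}\alpha_i\psi_{U_i}$ over free arcs $U_i$, where the finiteness of the branch-point orders is exactly what renders the boundary values $1/\mathrm{ord}(\cdot)$ of the $\psi_{U_i}$ meaningful. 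It therefore suffices to prove $S_N(x,\psi_U)=o(1)$ for each free arc $U$; fix such a $U$ and a point $x$, and set $L=\omega_f(x)$.

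If $L$ is finite it is a single periodic orbit and $f^n(x)$ converges to it, so $x$ is asymptotic to a periodic point; then \eqref{l11} and Lemma~\ref{l12} give \eqref{(1.1)}. The substance is the infinite case, for which I aim to prove the dichotomy: either (a) $L$ is contained in a cycle of subgraphs $W=\bigcup_{i=0}^{k-1}f^i(K)$ with $f(W)=W$, or (b) $L$ is \emph{solenoidal}, i.e. there is a nested family of cycles of subcontinua $X_1\supset X_2\supset\cdots$ with strictly increasing periods $k_n\to\infty$ and $L\subset\bigcap_n X_n$ (the dendrite analogue of Lemma~\ref{dich}). In case (a), $h(f|_W)=0$ and $W$ is a graph, so Theorem~\ref{t31} gives \eqref{(1.1)} for every point of $W$; choosing $s$ with $f^s(x)\in W$ and using $f(W)=W$ to pick $y\in W$ with $f^s(y)=f^s(x)$, the pair $(x,y)$ is asymptotic and Lemma~\ref{l12} transfers \eqref{(1.1)} to $x$ --- precisely the device closing the proof of Theorem~\ref{t31}. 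If a periodic piece is a genuine subdendrite rather than a graph, the same step runs through the null/tame route (Propositions~\ref{p33} and \ref{p34}), provided one knows that a zero-entropy dendrite map without periodic points is null.

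In case (b) I argue directly for $x$, as in Proposition~\ref{C1}. Since the $k_n$ components of $X_n$ have pairwise disjoint, cyclically permuted iterates, any two times $n,m\ge s$ with $f^n(x),f^m(x)$ in one component of $X_n$ satisfy $|n-m|\ge k_n$. Because $U$ is free (no branch point inside it), a connected component of $X_n$ either lies inside $U$, or misses $U$, or meets $U$ only by reaching one of the two endpoints of $U$; as the components are disjoint, the last, partial, case occurs at most twice. Splitting $S_N(x,\psi_U)$ over the components, the ones contained in $U$ are handled by \eqref{l11} (their visiting times form a union of residue classes mod $k_n$, so the pattern is eventually periodic), the ones missing $U$ contribute $0$, and the at most two partial ones are controlled by Lemma~\ref{l13}; this yields $\limsup_N|S_N(x,\psi_U)|\le 2/k_n$. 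Letting $n\to\infty$ gives $S_N(x,\psi_U)=o(1)$, hence $S_N(x,\phi)=o(1)$, and finally $S_N(x,\varphi)=o(1)$ by the approximation.

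The crux, and the step I expect to be hardest, is the structure dichotomy (a)/(b) for infinite $\omega$-limit sets; this is where all three hypotheses enter. Zero entropy must exclude the positive-entropy ``basic set'' phenomenon (the dendrite counterpart of Lemma~\ref{basic}); $E(X)$ closed keeps the branch points of finite order and discrete, so that the periodic pieces are honest finite gluings of arcs; and $E(X)'$ finite confines all the topological complexity to neighbourhoods of $F$, which is what prevents infinite $\omega$-limit sets that are neither graph-supported nor built from a shrinking nested family of periodic subcontinua. Producing these periodic subcontinua and showing that the orbit of $x$ is eventually trapped and cyclically permuted among their components near the points of $F$ is the technical heart; once it is in place, the counting and approximation above are routine adaptations of Section~3.
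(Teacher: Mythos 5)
Your counting argument for the ``solenoidal'' case is essentially the paper's Case 3, but two steps of the proposal do not close. The main gap is the structure statement that you yourself flag as the crux: you assert, without proof, a dichotomy for infinite $\omega$-limit sets (contained in a cycle of subgraphs, versus solenoidal), and this is not the case division the paper actually uses. The paper splits into $L$ finite, $L$ countable, and $L$ uncountable. The countable case is dispatched at once by the theorem that M\"{o}bius disjointness holds for any continuous map of an at most countable compact space (Lemma \ref{cou}), applied to $\overline{\mathrm{Orb}_f(x)}$ --- an ingredient absent from your proposal; and your dichotomy is doubtful precisely for countably infinite $L$, since in a dendrite with infinitely many branch points such a set need not lie in any finite union of non-degenerate subgraphs (the Hric--Malek classification you are implicitly transposing is a statement about graphs) nor be solenoidal. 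For $L$ uncountable the paper does not prove a dichotomy either: it imports Lemma \ref{Dec} from \cite{Ask}, which says that under these hypotheses zero entropy forces \emph{every} uncountable $\omega$-limit set to carry a nested sequence of periodic subdendrites $D_k$ with periods $\alpha_k\to\infty$ whose iterates pairwise overlap only in a set with empty interior (in fact a finite set along the orbit). That is the theorem your proof is missing; and your fallback route through ``a zero-entropy dendrite map without periodic points is null'' is likewise unestablished, since Proposition \ref{p32} is again only about graph maps.

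The second gap is the approximation step. You approximate $\varphi$ uniformly by combinations of $\psi_U$ over free arcs only. Near a point $e\in E(X)'$ the endpoints (and possibly branch points) of $X$ accumulate, so no free arc contains a relative neighbourhood of $e$; if $\varphi(e)\neq 0$, a finite sum $\sum_i\alpha_i\psi_{U_i}$ over free arcs cannot be uniformly close to $\varphi$ on the infinitely many branches clustering at $e$. This is exactly why the paper enlarges the family of test sets to include, for each $e\in E(X)'$, a connected component of $X\setminus\{z\}$ containing $e$: the counting argument still applies to such a set because the pieces $f^i(D_k)$ are connected and can straddle its one-point boundary for at most two residues $i$, so Lemma \ref{l13} and the eventually periodic case \eqref{l11} cover all contributions. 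With Lemma \ref{Dec} and this enlarged family your computation becomes the paper's proof; without them it does not go through.
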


We need the following results.

\begin{lem}[\cite{HWZ},\cite{Wei}, Theorem 5.16] \label{cou}
	If $X$ is at most countable and $f:~X \to X$ is a continuous map, then $($1.1$)$ holds.
\end{lem}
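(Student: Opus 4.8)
The plan is to deduce the statement from the tameness machinery already assembled in the paper, rather than to argue directly on the orbit. Concretely, I will show that every continuous self-map of an at most countable compact metric space is tame, and then invoke Proposition \ref{p34}, which says that tame systems satisfy $(1.1)$. The one structural observation driving everything is that when $X$ is at most countable and compact, the function space $X^{X}$ (with the topology of pointwise convergence) is a \emph{countable} product of compact metric spaces, hence is itself compact and metrizable.

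Granting this, the enveloping semigroup $E(X,f)=\overline{\{f^{n}:n\ge 0\}}$, being a closed subset of the compact metrizable space $X^{X}$, is again compact and metrizable. From here there are two complementary ways to see tameness, and I would record both for safety. First, any compact metrizable space embeds homeomorphically into the Hilbert cube $[0,1]^{\mathbb{N}}$, and $[0,1]^{\mathbb{N}}$ is exactly the space of all functions $\mathbb{N}\to[0,1]$ with the pointwise topology; since every function on the discrete Polish space $\mathbb{N}$ is continuous, hence of class $\textrm{Baire-1}$, the space $[0,1]^{\mathbb{N}}$, and therefore $E(X,f)$, is a Rosenthal compact in the sense recalled before Proposition \ref{p34}. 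Second (as a reinforcing remark), metrizability of $X^{X}$ makes it first countable, so each $g\in E(X,f)$ is the pointwise limit of an ordinary \emph{sequence} $f^{n_{k}}$ of continuous maps, i.e. a $\textrm{Baire-1}$ function $X\to X$; this is the standard ``metrizable enveloping semigroup $\Rightarrow$ tame'' phenomenon. Either way $(X,f)$ is tame, and Proposition \ref{p34} yields $(1.1)$ for every $x\in X$ and every continuous $\varphi$.

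The step I expect to require the most care is not a hard estimate but a bookkeeping matter: one must check that the abstract ``compact metrizable'' conclusion is correctly matched to the paper's function-space definition of Rosenthal compactness (that is, that $E(X,f)$ really sits inside $\textrm{Baire-1}(P)$ for a Polish $P$ with the right topology), and that the one-sided (forward) enveloping semigroup $\overline{\{f^{n}:n\ge 0\}}$ is the object to which Proposition \ref{p34} applies when $f$ is merely continuous rather than a homeomorphism. I would make these identifications explicit, since they are the only places where something could silently go wrong.

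Finally, I would briefly indicate why a more elementary, ``hands-on'' route is less convenient, to justify the tameness detour. Using Lemma \ref{AU} one can always find an almost periodic $y\in\omega_{f}(x)$ proximal to $x$; since a minimal subset of an at most countable compact space is itself countable compact and minimal, it has an isolated point and hence is a finite periodic orbit, so $y$ is periodic and $(1.1)$ holds along $y$ by \eqref{l11}. To transfer this to $x$ via the asymptotic-pair lemma (Lemma \ref{l12}) one would need $(x,y)$ to be not merely proximal but asymptotic, which need not hold in general; likewise, a transfinite induction on the Cantor--Bendixson rank of $X$ stalls because $f$ need not map the derived set into itself. Tameness sidesteps exactly these two obstacles, which is why I would adopt it as the main line of proof.
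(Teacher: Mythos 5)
Your derivation is sound, but there is an important structural point to make first: the paper contains \emph{no proof} of Lemma \ref{cou} at all --- it is imported as a black box from \cite{HWZ} and from Theorem 5.16 of Wei's thesis \cite{Wei}, so there is no internal argument to compare against. Judged on its own, your route is correct and, pleasingly, it stays entirely inside the paper's quoted toolkit: the key observation that $X^{X}$ is an (at most) countable product of compact metric spaces, hence compact and metrizable, does make the closure $\overline{\{f^{n}:n\ge 0\}}$ compact metrizable, and your two verifications of Rosenthal compactness both work --- in fact the sequential one is the cleaner match to the paper's conventions, since first countability of $X^{X}$ exhibits every element of the closure as a pointwise limit of a sequence $f^{n_{k}}$ of continuous maps, which is exactly the Bourgain--Fremlin--Talagrand description quoted before Proposition \ref{p34}, with $P=X$ (a countable compact metric space is Polish). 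This is the same chain the paper itself runs for graph maps without periodic points (Proposition \ref{p310}), so your proof is internally economical; note, however, that it is not more elementary than the citation, since Proposition \ref{p34} is itself the deep Huang--Wang--Ye input \cite{HWY1} --- the standard proofs of the countable case in the cited sources run through the closely related observation that every ergodic invariant measure of a countable compact system is purely atomic, hence carried by a single periodic orbit, so that all invariant measures have discrete spectrum, and then invoke a disjointness criterion of the same family. The one caveat you flag is the right one: the literature behind Proposition \ref{p34} is typically formulated for invertible (or at least surjective) systems, whereas here $f$ is merely continuous; but the paper already applies Proposition \ref{p34} to non-invertible graph maps, so your usage is consistent with its conventions, and if one wants extra safety one may replace $X$ by $\overline{\mathrm{Orb}_f(x)}$, which is again countable, compact and $f$-invariant. (One cosmetic warning: avoid denoting the enveloping semigroup by $E(X,f)$, since the paper already uses that symbol in Section 3 for the set of points all of whose neighborhoods have dense orbit.) Finally, your diagnosis of why the hands-on route stalls is accurate on both counts: Lemma \ref{AU} gives only proximality, which Lemma \ref{l12} cannot use without asymptoticity, and a Cantor--Bendixson induction fails because a continuous $f$ need not map the derived set $X'$ into itself.
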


\begin{lem}[\cite{Ask} \label{Dec}]
	Let $X$ be a dendrite such that $E(X)$ is closed and $E(X)^{\prime}$ is finite. Let $f:~X \to X$ be a continuous map 
	with zero topological entropy. Let $L$ be an uncountable $\omega$-limit set. Then there is a sequence of
	subdendrites $(D_k)_{k\geq 1}$ of $X$ and a sequence of integers $n_k \geq 2$ 
	for every $k\geq 1$ with the followings properties. For all $k\geq 1$,
	\begin{enumerate}
		\item $f^{\alpha_k}(D_k)=D_k$, where $\alpha_k=n_1 n_2 \dots n_k$,
		\item $\cup_{k=0}^{n_j -1}f^{k \alpha_{j-1}}(D_{j}) \subset D_{j-1}$ for all $j\geq 2$,
		\item $L \subset \cup_{i=0}^{\alpha_k -1}f^{i}(D_k)$,
		\item $f(L \cap f^{i}(D_k)) = L\cap f^{i+1}(D_k)$ for any $0\leq i \leq \alpha_{k}-1$. 
		In particular, $L \cap f^{i}(D_k) \neq \emptyset$,
		\item  $f^{i}(D_k)\cap f^{j}(D_k)$ has empty interior for any $ 0\leq i\neq j<\alpha_k$.
	\end{enumerate}
\end{lem}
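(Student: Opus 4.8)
The plan is to build the tower $(D_k)_{k\ge1}$ by induction on $k$: at each stage I restrict to a periodic subdendrite, renormalize by a suitable power of $f$, and split the renormalized piece into a cycle of $n_k\ge2$ shorter subdendrites that refine the decomposition of $L$. The entire difficulty is concentrated in a single \emph{splitting step}, and it is there that the zero entropy hypothesis is used; the rest is bookkeeping and a limiting argument on the dendrite.

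\textbf{Setup and standing reductions.} First I would record the consequences of the hypotheses that make $X$ behave like a finite tree. Because $E(X)$ is closed, the branch set $B(X)$ is discrete and every branch point has finite order (Corollary~3.6 of \cite{Ar} and \cite[Theorem 3.3]{Ar}); because $E(X)^{\prime}$ is finite, outside a finite set there is no accumulation of endpoints, so that $X$ is, up to finitely many complicated ends, a finite topological tree. Both facts are inherited by every subdendrite, so the hypotheses persist at each stage of the induction, and they also allow $X$ to be exhausted by finite subtrees $T$ whose monotone retractions $r_{T}$ carry the dynamics to tree maps of zero entropy, so that the graph-map structure theory of \cite{RS} becomes applicable. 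I would then fix the uncountable $\omega$-limit set $L=\omega_{f}(x)$ and set $D_{0}=X$, $\alpha_{0}=1$, noting $L\cap D_{0}=L$ is uncountable.

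\textbf{The splitting step.} The inductive engine is the claim: if $D$ is a subdendrite with $f^{\alpha}(D)=D$, if $g:=f^{\alpha}|_{D}$ has zero entropy, and if $L\cap D$ is an uncountable $\omega$-limit set of $g$, then there are an integer $n\ge2$ and a subdendrite $D^{\prime}\subset D$ with $g^{n}(D^{\prime})=D^{\prime}$ such that $D^{\prime},g(D^{\prime}),\dots,g^{n-1}(D^{\prime})$ have pairwise empty-interior intersections, are cyclically permuted by $g$, cover $L\cap D$, and satisfy $g(L\cap g^{i}(D^{\prime}))=L\cap g^{i+1}(D^{\prime})$. To produce $D^{\prime}$ I would separate $\overline{L\cap D}$ by a cut point: since $L\cap D$ is uncountable while $B(X)$ is discrete of finite order and $E(X)^{\prime}$ is finite, $L$ cannot sit inside a single free arc, so there is a cut point $c$ whose removal splits $\overline{L\cap D}$ into finitely many pieces permuted by $g$; taking $n$ to be the cardinality of the $g$-orbit of one such piece and $D^{\prime}$ the convex hull of that piece gives the candidate, after which $g^{n}(D^{\prime})=D^{\prime}$ and the cyclic action on $L$ follow from the choice of $c$. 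Property~(5) is then automatic, since two distinct pieces of a dendrite meet only in the separating cut point together with at most finitely many branch points, a set of empty interior. The crucial point is the strict inequality $n\ge2$, i.e.\ that an uncountable $\omega$-limit set always admits a nontrivial separation; here I would invoke the dendrite analogue of the basic-set dichotomy, transported from \cite{RS} (Lemmas~\ref{Min}, \ref{inf} and \ref{basic}) via the tree approximation above: either $L\cap D$ refines, or it is a basic set, and a basic $\omega$-limit set forces positive topological entropy, contradicting $h(g)=0$.

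\textbf{Iteration and verification.} Applying the splitting step starting from $D_{0}=X$ yields subdendrites $D_{k}$ with $\alpha_{k}=\alpha_{k-1}n_{k}=n_{1}\cdots n_{k}$ and $f^{\alpha_{k}}(D_{k})=D_{k}$, which is~(1); property~(2) records that the $n_{j}$ pieces $f^{k\alpha_{j-1}}(D_{j})$ produced at stage $j$ lie in $D_{j-1}$, which holds by construction, while (3)--(4) are the covering and cyclic-action statements carried along by the splitting. Since each $n_{k}\ge2$ the periods $\alpha_{k}$ grow without bound, and because at each level the sets $\{f^{i}(D_{k}):0\le i<\alpha_{k}\}$ form families of connected sets with empty-interior overlaps, Lemma~\ref{l2} forces their diameters to $0$; combined with Lemma~\ref{diam} this shows $\bigcap_{k}\bigcup_{i}f^{i}(D_{k})$ is a genuine odometer carrying $L$, so the construction does not stall and $L\subset\bigcup_{i=0}^{\alpha_{k}-1}f^{i}(D_{k})$ for every $k$. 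The main obstacle is the splitting step, and within it the reducibility claim $n\ge2$: proving that under these finiteness hypotheses a non-refinable uncountable $\omega$-limit set must be basic, hence of positive entropy, is the delicate part, and it is precisely where the tameness forced by ``$E(X)$ closed and $E(X)^{\prime}$ finite'' — which excludes the exotic transitive zero-entropy dendrite examples — is indispensable.
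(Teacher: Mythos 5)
Note first that the paper never proves this lemma: it is imported verbatim from Askri \cite{Ask}, so your attempt cannot be matched against an in-paper argument and must be judged on its own terms. On those terms it has a genuine gap, concentrated exactly where you yourself place ``the entire difficulty''. Your splitting step is asserted, not proved. Strong invariance of $L\cap D$ under $g=f^{\alpha}|_{D}$ does not yield a cut point $c$ whose removal splits $\overline{L\cap D}$ into finitely many pieces \emph{cyclically permuted} by $g$: the $g$-image of one component of $\overline{L\cap D}\setminus\{c\}$ may spread over several components or straddle $c$, and nothing in your sketch rules this out. Even granting such a partition, taking $D'$ to be the convex hull of one piece $A$ only gives $g([A])\supseteq[g(A)]$ (the image of a continuum is a continuum containing $g(A)$), so at best $g^{n}(D')\supseteq D'$; upgrading this to the exact equality $f^{\alpha_k}(D_k)=D_k$ of item (1), while simultaneously preserving the empty-interior overlaps of item (5) and the nesting of item (2), is precisely the content of the lemma and not ``bookkeeping''. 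Indeed (5) is not ``automatic'' either: the sets $g^{i}(D')$ are \emph{images}, not hulls of the pieces, and need not stay inside the closures of distinct components of $D\setminus\{c\}$. Worse, the one claim you correctly flag as crucial, $n\ge 2$ --- that an uncountable $\omega$-limit set of a zero-entropy map on such a dendrite always refines --- is delegated to a ``dendrite analogue of the basic-set dichotomy'' that you never establish; but that dichotomy \emph{is} the theorem being proved.

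The transport mechanism you propose for it is also unsound. For a finite subtree $T\subset X$ the retracted map $r_{T}\circ f|_{T}$ is not semiconjugate to $f$: orbit segments leave $T$, so $(r_{T}\circ f)^{n}\neq r_{T}\circ f^{n}$ in general, and consequently neither $h(f)=0$ nor the $\omega$-limit and cycle structure passes to the retracted tree maps; the graph-map machinery of \cite{RS} (Lemmas~\ref{Min}, \ref{inf}, \ref{basic}) therefore cannot be invoked for them without a genuinely new argument. The obstruction was never that some subtree fails to be a graph --- trees are graphs and \cite{RS} applies to them directly --- but that $X$ itself is not a graph, having infinitely many branch points; the hypotheses ``$E(X)$ closed and $E(X)'$ finite'' are needed precisely to control the excursions of orbits into the infinitely many branches that a retraction simply crushes. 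Controlling those excursions, via first-return maps and an adaptation of the Smítal-type solenoidal decomposition from interval maps, is what Askri's actual proof in \cite{Ask} supplies and what your outline leaves open. (Two smaller untreated points: that the standing hypotheses persist for every subdendrite $D_k$, and that $L\cap D$ is in fact an $\omega$-limit set of $g$, both of which require the cyclic covering you are in the middle of constructing.)
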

\medskip

\emph{Proof of Theorem \ref{t42}}. We distinguish three cases.

\begin{description}
	\item[\textbf{Case 1}]: $L$ is finite. In this case, there is a periodic point $b$ such that 
	$(x,b)$ is asymptotic. Then by (\ref{l11}) and Lemma \ref{l12},  $S_N (x,\varphi)=o(1)$.\\
	
	\item[\textbf{Case 2}]: $L$ is countable. In this case, $Y:=\overline{O_f(x)}=O_f(x) \cup L$ is countable and $f$-invariant. So by Lemma \ref{cou} applied to $(Y,f_{|Y})$, ($1.1$) holds.\\
	
	\item[\textbf{Case 3}]: $L$ is uncountable. Let $\varepsilon >0$ and $k\geq 1$. 
	Let $\varphi \in \mathcal{C}(X, \mathbb{R})$. There exists a 
	function $\varphi_0 \in \mathcal{C}(X, \mathbb{R})$ 
	of the form $\varphi_0=\sum_{k=1}^n \alpha_k \psi_{U_k}$, where $\alpha_k $ are real numbers such that
	$\sup_{x\in X}|\varphi(x)-\varphi_0 (x)| <\frac{\varepsilon}{2}$ and $U_k$ is an open connected subset defined as follows:
	
	$-$  If $U_k \cap E(X)^{\prime}=\emptyset$, then $U_k$ is an open free arc in $ X$.
	
	$-$ If $ U_k \cap E(X)^{\prime} \neq \emptyset$, then $ U_k \cap E(X)^{\prime} =: \{e\}$ and $U_k$ is a connected component of
	$ X \backslash \{z\}$ containing e for some $ z \in X \backslash E(X)$. $~~~~~~~$
	
	$-$ For any $k \neq l$, $U_k \cap U_l = \emptyset$.
	
	The map $\psi_{U_j}$ is defined as follows:
	
	$$
	\psi_{U_j} (x) = \left\{
	\begin{array}{lll}
	1 & \mbox{ if }  x\in U_j \\
	\frac{1}{ord(x)} & \mbox{ if } x\in \overline{U_j} \backslash U_j\\
	0 & \mbox{ if }  x\in X \backslash \overline{U_j}.
	\end{array}
	\right.$$
	Let $X = \cup_{i=0}^{\alpha_k -1} f^i (D_k)$ be as in Lemma \ref{Dec}. There is $n_0\geq 0$ such that 
	$f^{n_0}(x)\in D_k$. Since $D: = \cup_{0\leq i\neq j< \alpha_k}f^i (D_k) \cap f^j (D_k)$ is finite, we may assume that 
	$f^n (x) \notin D$ for any $n\geq n_0$. So for any $n\geq n_0$ and $0\leq s<\alpha_k$, $f^n (x)\in f^s (D_k)$ 
	if and only if $n\equiv n_0 +s ~ \textrm{mod}(\alpha_k)$. Then
	\begin{align*}
	S_N (x,\psi_{U_j})&=\frac{1}{N}\sum_{n=1}^N \bmu(n)\psi_{U_j}(f^n(x))\\
	& = \frac{1}{N}\sum_{n=1}^{n_0 -1} \bmu(n) \psi_{U_j}(f^n(x))+\frac{1}{N} \sum_{n=n_0}^N  \bmu(n) \psi_{U_j}(f^n(x))\\
	& = o(1)+ \sum_{s=0} ^{\alpha_k -1} A_N^s
	\end{align*}
	where $A_n^s = \frac{1}{N} \sum_{n_0 \leq n \leq N, f^n(x) \in f^{s}(D_k)}  \bmu(n) \psi_{U_j}(f^n(x))$.\\ 
	
	For $s=0,1,\dots,\alpha_k -1$, define the sequence $$
	x_n^s = \left\{
	\begin{array}{ll}
	0 & \mbox{ if } ~~ n<n_0 \\
	\psi_{U_j}(f^n(x)) \chi_{f^s (D_k)}(f^n (x)) & \mbox{ if } ~~ n\geq n_0
	\end{array}
	\right.
	$$ where $\chi_{f^s (D_k)}$ is the characteristic function of $f^s (D_k)$. We can rewrite $A_N^s$ as follows:
	\ $A_N^s = \frac{1}{N} \sum_{n=1}^N x_n^s .$ We see that if $f^{s}(D_k) \subset U_j$, then the sequence $(x_n^s)$ is eventually periodic with period $\alpha_k$.
	Then by Lemma \ref{l11}, $A_n^s=\frac{1}{N} \sum_{n=1}^N x_n^s =o(1)$. Indeed, otherwise, there is 
	at most two distinct numbers $s, r \in \{0,1,2,\dots,\alpha_k -1\}$ such that 
	$f^{s}(D_k) \subsetneq U_j$, $f^{s}(D_k) \cap U_j \neq \emptyset$, $f^{r}(D_k) \subsetneq U_j$ and 
	$f^{r}(D_k) \cap U_j \neq \emptyset$. In such case, if $f^n (x), f^p (x) \in f^s (D_k)$ and 
	$n \neq p$, then $|n-p|\geq \alpha_k$. Then by Lemma \ref{l13} $$\limsup_{N \to +\infty} |A_N^s|=\limsup_{N \to +\infty} |A_N^r| \leq \frac{1}{\alpha_k}.$$
	The integer $\alpha_k$ can be taken arbitrary large, then we obtain that \\ $S_N (x,\psi_{U_j}) = o(1)$ and hence 
	$S_N (x,\varphi_0)=o(1)$. Therefore $(1.1)$  holds for $(X,f)$.   \hfill $\blacksquare$
\end{description}
\medskip

\subsection{\bf On a transitive dendrite map with zero entropy}

In \cite{BFK}, Byszewski et al. give an example of transitive map $f$ with zero entropy
on the universal dendrite $D$ with the following properties:
(1) $f$ has a unique fixed point $o$.
(2) $f$ is uniquely ergodic, with the only $f$-invariant Borel probability measure being
the Dirac measure $\delta_o$ concentrated on $o$. Applying the machinery from \cite[p.313]{ALR}, one can see that we have the following. We include the proof for the reader convenience.  

\begin{prop} Let $f$ be the dendrite map above. Then $(1.1)$ holds.
\end{prop}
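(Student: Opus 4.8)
The plan is to exploit the very rigid structure of this system: unique ergodicity together with the fact that the unique invariant measure is the point mass $\delta_o$ at a fixed point forces the orbit of every point to spend asymptotically all of its time arbitrarily close to $o$, which reduces the whole estimate to the (purely number--theoretic) constant case. First I would normalize $\varphi$. Writing $\psi:=\varphi-\varphi(o)$, so that $\psi(o)=0$, we have
\[
S_N(x,\varphi)=\frac{1}{N}\sum_{n=1}^N\bmu(n)\psi(f^n(x))+\varphi(o)\cdot\frac{1}{N}\sum_{n=1}^N\bmu(n).
\]
The second term tends to $0$ by the Prime Number Theorem (the constant case of $(1.1)$ recorded in the introduction), so it suffices to prove $\frac{1}{N}\sum_{n=1}^N\bmu(n)\psi(f^n(x))=o(1)$ for a continuous $\psi$ with $\psi(o)=0$.

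For such a $\psi$, fix $\varepsilon>0$ and choose, by continuity of $\psi$ at $o$, an open neighborhood $U$ of $o$ with $|\psi(y)|<\varepsilon$ for all $y\in U$. The next step is to control the fraction of iterates that land outside $U$. Pick a continuous function $g:D\to[0,1]$ with $g(o)=0$ and $g\equiv 1$ on $D\setminus U$. Since $(D,f)$ is uniquely ergodic with invariant measure $\delta_o$, the Birkhoff average of any continuous function converges to its integral against $\delta_o$; applied to $g$ this gives
\[
\frac{1}{N}\#\{1\le n\le N:\ f^n(x)\notin U\}\ \le\ \frac{1}{N}\sum_{n=1}^N g(f^n(x))\ \xrightarrow[N\to\infty]{}\ \int_D g\,d\delta_o=g(o)=0.
\]

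Now I would split $S_N(x,\psi)$ according to whether $f^n(x)\in U$ or not and bound each piece trivially using only $|\bmu(n)|\le 1$:
\[
\Big|\frac{1}{N}\sum_{n=1}^N\bmu(n)\psi(f^n(x))\Big|\ \le\ \frac{1}{N}\sum_{\substack{1\le n\le N\\ f^n(x)\in U}}|\psi(f^n(x))|+\frac{1}{N}\sum_{\substack{1\le n\le N\\ f^n(x)\notin U}}|\psi(f^n(x))|\ \le\ \varepsilon+\norm{\psi}_{\infty}\cdot\frac{1}{N}\#\{1\le n\le N:\ f^n(x)\notin U\}.
\]
Letting $N\to\infty$ and then $\varepsilon\to 0$ yields $\frac{1}{N}\sum_{n=1}^N\bmu(n)\psi(f^n(x))=o(1)$, hence $S_N(x,\varphi)=o(1)$ for every $x\in D$ and every continuous $\varphi$, which is $(1.1)$.

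The point I would stress is that no arithmetic property of $\bmu$ is used beyond $|\bmu|\le 1$ and $\frac{1}{N}\sum_{n\le N}\bmu(n)=o(1)$: the dynamics does essentially all of the work, because the $\delta_o$-occupation forces $\psi(f^n(x))$ to be small off a set of iterates of density zero. Accordingly, the only genuinely delicate step is the passage from unique ergodicity to the orbit--occupation estimate, namely dominating the indicator of the open set $D\setminus U$ by a continuous function to which the uniform convergence of Birkhoff averages applies; everything else is an elementary two-piece split. This is precisely the ``machinery'' borrowed from \cite[p.~313]{ALR}.
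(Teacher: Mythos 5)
Your proof is correct and follows essentially the same route as the paper: subtract the constant $\varphi(o)$, handle the constant term by the Prime Number Theorem, and use unique ergodicity with invariant measure $\delta_o$ to kill the remainder. The only difference is cosmetic: the paper applies the uniquely ergodic Birkhoff convergence directly to the continuous function $|\varphi-\varphi(o)|$ (whose integral against $\delta_o$ is $0$), which makes your detour through the neighborhood $U$ and the dominating function $g$ unnecessary.
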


\begin{proof} Let $\varphi: \ D\longrightarrow \mathbb{R}$ be a continuous function
	and set $\Phi = \varphi-\varphi(o)$.
	As $\delta_o$ is the only $f$-invariant Borel probability measure (by (2)), and 
	since $\int_D|\Phi|d\delta_o = 0$, then
	$\frac{1}{N}\sum\limits_{n=0}^{N-1}|\Phi|(f^{n}(x))\longrightarrow 0$.
	As 
	$$\Big|\frac{1}{N}\sum_{n=0}^{N-1}\bmu(n)\varphi(f^n(x))\Big|
	\leq \frac{1}{N}\sum\limits_{n=0}^{N-1}|\Phi|(f^{n}(x)) +
	|\varphi(o)| \Big|\frac{1}{N}\sum_{n=0}^{N-1}\bmu(n)\Big|$$ and as
	$\frac{1}{N}\sum_{n=0}^{N-1}\bmu(n)\longrightarrow 0$ (by Prime Number Theorem), so we get that
	$\frac{1}{N}\sum_{n=0}^{N-1}\bmu(n)\varphi(f^n(x))\longrightarrow 0$.
\end{proof}
\bigskip

Let us notice that the convergence in $(1.1)$ is uniform, since $(D,f)$ satisfy the so-called MOMO property (M\"{o}bius Orthogonality on Moving Orbits) (see \cite{AKLR2} for the definition). In this direction, it is proved in \cite{AKLR2} the following
\begin{prop}[\cite{AKLR2}]\label{Uniform}
	If Sarnak's conjecture $(1.1)$ is true then for all zero entropy systems
	$(X, T )$ and $f\in C(X)$, then we have 
	$$\frac1N \sum_{n=1}^{N}f(T^nx)\bmu(n) \tend{N}{+\infty}0,$$
	uniformly in $x \in X$.
\end{prop}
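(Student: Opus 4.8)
The plan is to argue by contradiction, converting a hypothetical failure of the uniform convergence into a failure of the pointwise conjecture \eqref{(1.1)} on an \emph{auxiliary} zero entropy system, which is forbidden by the hypothesis. It is convenient to work with the reformulation of uniform convergence as Möbius orthogonality on moving orbits (the MOMO property of \cite{AKLR2}): for a fixed zero entropy system $(X,T)$ and $f\in C(X)$, uniformity of $\frac1N\sum_{n=1}^N f(T^nx)\bmu(n)\to 0$ is equivalent to the statement that for every increasing sequence $b_0<b_1<\cdots$ with gaps $b_{k+1}-b_k\to+\infty$ and every sequence $(x_k)$ in $X$ one has
\[
\frac{1}{b_K}\sum_{k<K}\Big|\sum_{b_k\le n<b_{k+1}}\bmu(n)\,f(T^{\,n-b_k}x_k)\Big|\longrightarrow 0 .
\]
After subtracting a constant and rescaling we may assume $f$ is real with $\|f\|_\infty\le 1$. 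If this property failed, there would be $\eps>0$, points $x_k$ and cut points $b_k$ with gaps $L_k=b_{k+1}-b_k\to+\infty$ so that the averaged block sums stay $\ge\eps$ along a set of $k$ of positive upper density.

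The heart of the argument is to realise the juxtaposition of the orbit segments $\big(T^{\,j}x_k\big)_{0\le j<L_k}$ as a genuine orbit in a single system that still carries zero entropy. Define the moving point $\zeta=(z_n)_{n\ge 0}$ by $z_n=T^{\,n-b_k}x_k$ for $b_k\le n<b_{k+1}$, let $\sigma$ be the shift on $X^{\mathbb N}$, set $Y=\overline{\mathrm{Orb}_\sigma(\zeta)}$, and let $g\in C(Y)$ be evaluation of $f$ at the zeroth coordinate, so $g(\sigma^n\zeta)=f(z_n)$. The decisive step, which I would isolate as a lemma, is that $h(\sigma|_{Y})=0$: every concatenated segment is an orbit piece of the \emph{single} zero entropy system $(X,T)$, so the number of $\delta$-distinguishable words of length $L$ occurring in $\zeta$ grows subexponentially in $L$, while the pattern of cut points $(b_k)$ is tracked by a zero entropy odometer-type clock; combining these gives subexponential word complexity and hence zero topological entropy.

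With $(Y,\sigma)$ in hand, the hypothesis that \eqref{(1.1)} holds on \emph{every} zero entropy system applies to $(Y,\sigma)$, the point $\zeta$ and the function $g$, yielding $\frac1L\sum_{n=1}^{L}\bmu(n)f(z_n)\to 0$. Read along $L=b_K$ and regrouped according to the blocks, this says that the signed averaged block sums $\frac1{b_K}\sum_{k<K}\sum_{b_k\le n<b_{k+1}}\bmu(n)f(T^{\,n-b_k}x_k)$ vanish, which, once the block signs have been accounted for, contradicts the persistence of the large \emph{absolute} averages furnished by the $x_k$. Tracing the contradiction back through the MOMO reformulation then delivers the asserted uniform convergence.

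The step I expect to be the main obstacle is precisely the zero entropy verification of the auxiliary system: one must show that gluing together orbit segments issued from possibly different points of a single zero entropy system, along a sufficiently sparse and structured sequence of cut points, creates no topological entropy, and this is where the subexponential complexity estimate and the odometer clock are essential. Intertwined with it is the passage from the \emph{signed} averages produced by \eqref{(1.1)} on $(Y,\sigma)$ to the \emph{absolute} block sums appearing in MOMO: the block signs must be encoded into the distinguished orbit without manufacturing entropy (for instance through a zero entropy labelling coordinate), and one must also reconcile the Möbius weights $\bmu(n)$ taken at their true positions $n\in[b_k,b_{k+1})$ with the origin-indexed averages in the statement. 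This bookkeeping is exactly what the MOMO machinery of \cite{AKLR2} is designed to absorb, and I would invoke it to close the argument.
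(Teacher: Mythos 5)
First, a point of reference: the paper does not actually prove Proposition \ref{Uniform}; it is quoted from \cite{AKLR2} without proof, so there is no internal argument to measure yours against. Your overall strategy --- argue by contradiction, pass to a MOMO-type block statement, realize the concatenated orbit segments as a single orbit in an auxiliary shift system of zero entropy, and apply the hypothesis \eqref{(1.1)} to that system --- is the right one, and it is in substance the proof given in \cite{AKLR2}.

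As written, however, the argument has two genuine gaps, both of which you flag but neither of which you fill, and deferring them to ``the MOMO machinery of \cite{AKLR2}'' is circular here, since that reference is precisely the source of the statement being proved. The more serious gap is the passage from signed to absolute block sums. Applying \eqref{(1.1)} to $(Y,\sigma)$, the point $\zeta$ and $g(y)=f(y_0)$ only yields that $\frac{1}{b_K}\sum_{k<K}\sum_{b_k\le n<b_{k+1}}\bmu(n)f(T^{n-b_k}x_k)$ tends to $0$, and this is perfectly compatible with the absolute block averages staying above $\varepsilon$, because of cancellation between blocks; no contradiction is reached. To close the argument one must choose signs $\theta_k\in\{-1,1\}$ realizing $\theta_k\sum_{b_k\le n<b_{k+1}}\bmu(n)f(T^{n-b_k}x_k)=\bigl|\sum_{b_k\le n<b_{k+1}}\bmu(n)f(T^{n-b_k}x_k)\bigr|$, replace $\zeta$ by the point of $(X\times\{-1,1\})^{\mathbb N}$ whose $n$-th coordinate is $(T^{n-b_k}x_k,\theta_k)$ for $b_k\le n<b_{k+1}$, take $g$ to be $s_0f(u_0)$ on the new orbit closure, and re-verify zero entropy there. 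The second gap is that zero entropy verification itself; it is true, and your intuition is correct, but the mechanism is not an ``odometer clock.'' The point is that, since $b_{k+1}-b_k\to+\infty$, every window of length $L$ in the (sign-decorated) concatenated point with sufficiently large starting position straddles at most one cut point, hence is determined up to $\delta$ by the cut position (at most $L$ choices), one sign, and two $(L,\delta)$-distinguishable orbit segments of $(X,T)$; thus the number of $(L,\delta)$-separated points in the orbit closure is of order at most $L\cdot\mathrm{sep}(L,T,\delta)^2$ up to a bounded correction, which is subexponential because $h(T)=0$. With these two steps written out the proof is complete; without them the contradiction does not actually materialize.
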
  
\medskip

\subsection{\bf On a transitive dendrite map with positive entropy}
In this subsection, we discuss the problem of M\"{o}bius disjointness for the example introduced by 
\v{S}pitalsk\'{y} in \cite{Sp}. V. \v{S}pitalsk\'{y} constructed his example as a factor of a map $F$ acting 
on the universal dendrite of order $3$. Precisely, let $Q$ be a set of all dyadic rational numbers in $(0,1)$, 
that is, every $r \in Q$ can be uniquely written as $r=\frac{p_r}{2^{q_r}}$ with $q_r \geq 1$ and $p_r$ is 
odd in $ \{1,\cdots,2^{q_r}\}$. Let us denote by $Q^0$, $Q^*$ the sets $\{0\}$ and $\cup_{k \geq 0}Q^k$. 
The length of element $\alpha \in Q^*$ denote by $|\alpha|$ correspond to the integer $k$ such that 
$\alpha \in Q^k$. We define on $Q^*$ the concatenation operation as follows:\\
$\textrm{For} \ \alpha \in Q^k, \beta \in Q^m$, we put $\gamma=\alpha \beta \in Q^{k+m}$. 
If $\alpha=r_0r_1\cdots r_{k-1} \in Q^k$ with $k \geq 1$, then $\widetilde{\alpha}$ denotes 
$r_0r_1\cdots r_{k-2}.$
The dendrite of order $3$ is given by     
$$X=\overline{\bigcup_{m \geq 0}X^{(m)}}=
\bigcup_{m \geq 0}X^{(m)} \cup X^{(\infty)},$$
where $X^{(0)}=[a_0,b_0]$ is an arc and every $X^{(m)}$, $m \geq 1$, satisfies
$$X^{(m)}=X^{(m-1)} \cup \Big(\bigcup_{\alpha \in Q^m}(a_\alpha,b_\alpha]\Big),$$
$a_\alpha \in (a_{\widetilde{\alpha}},
b_{\widetilde{\alpha}}],$ for $\alpha \in Q^m;$ moreover
$$B(X)=\bigcup_{k \geq 1}Q^k \textrm{~~and~~}
E(X)=\{a_0,b_0\} \cup \{b_\alpha, \alpha \in B(X)\} \cup X^{(\infty)}.$$ 
For any $\alpha \in Q^*$, we denote by $A_\alpha=[a_\alpha,b_\alpha]$ and by $X_\alpha$ the closure of 
the component of $X \setminus \{a_\alpha\}$ containing $b_\alpha$. Therefore,
\[X_\alpha=
\begin{cases}
X_o   & \text{if}~~~~\alpha=o, \\
\Big(\bigcup_{\beta \in Q^*} A_{\alpha\beta}\Big) \bigcup
\Big\{b_{\alpha\beta}:~~\beta \in Q^{\infty}\Big\} & \text{if}~~~~\alpha \in Q^*,
\end{cases}
\]
where, for any $\nu=\nu_0\cdots \in Q^{\infty}$, $b_\nu$ denotes the unique
point of $\bigcap_{m \geq 1}X_{\nu_0\cdots\nu_{m-1}}$. Notice that we have
$$X^{(\infty)}=\Big\{b_{\alpha}:~~~\alpha \in Q^{\infty}\Big\}.$$
By construction of the map $F$, for every $\alpha \in Q^*$ there is unique $\rho(\alpha) \in Q^*$ with 
$F(b_\alpha)=b_{\rho(\alpha)}$. This definition can be extended to $\alpha \in Q^{\infty}$ 
(see Lemma 8 in \cite{Sp}). This allows us to define $F$ on $X^{(\infty)}$ by 
$$F(b_\alpha) = b_{\rho(\alpha)}, ~~~~~~\text{for~every~} \alpha \in Q^{\infty}.$$

 V. \v{S}pitalsk\'{y} proved that $F$ has positive entropy and for any $x\in X^{(m)}$, $m \geq 1$, the omega set of $x$ is either 
$\{a_0\}$ or $\{b_0\}.$ Furthermore, if $\nu$ is an $F$-invariant Borel probability measure, then for each $m\geq 1$, 
$\nu (X^{(m)}\backslash \{a_0, b_0\})= 0$, and  $a_0$ and $b_0$ are the only fixed points of $F$. This yields that the 
topological entropy of  $F_{|_{X^{(m)}}}$ is zero.  Therefore, by (\cite{Bowen}, Proposition 2, (c)), the entropy of 
$F_{|\cup_{m \geq 1}X^{(m)}}$ is zero.
Moreover, by the same arguments as before, we can see easily that 
for any $x\in \cup_{m \geq 1} X^{(m)}$, for any continuous function $\Phi$, we have \\
$$\frac{1}{N}\sum_{n=1}^{N}\Phi(F^n(x))\bmu(n) \tend{N}{+\infty}0.$$
 
According to Proposition \ref{Uniform}, if Sarnak conjecture is true then the M\"{o}bius disjointness is uniform. 
But, we can not apply this result in our situation since the set $\cup_{m \geq 1}  X^{(m)}$ is a $F_\sigma$. 
Although, the M\"{o}bius disjointness holds uniformly on each $X^{(m)}$. We thus asked whether 
\v{S}pitalsk\'{y}'s example satisfy M\"{o}bius disjointness or not. This allows us also to ask the following questions.

\begin{Que} Let $(X,F)$ be the \v{S}pitalsk\'{y}'s example. Do we have that 
the M\"{o}bius disjointness is true for $(X,F)$?
\end{Que}
\medskip 

\begin{Que} Let $(X,T)$ be a dynamical system with zero topological entropy. Let $Y$ be a dense $T$-invariant subset of $X$. 
Again by Proposition 2. (c) from \cite{Bowen}, the topological entropy $T|_Y$ is zero. 
Assume that the M\"{o}bius disjointness for $(Y,T|_Y)$ holds, do we have that Sarnak conjecture is true for $(X,T)$?
\end{Que}

\textit{Acknowledgements.} The research of H. Marzougui and G. Askri acknowledge support from the research unit:
``Dynamical systems and their applications'' (UR17ES21), of Higher Education and Scientific Research, Tunisia.  The authors thanks Issam Naghmouchi for fruitful discussions on this paper.

\bibliographystyle{amsplain}

\end{document}